\documentclass[12pt]{article}

\usepackage[T1]{fontenc}
\usepackage[english]{babel}
\usepackage{mathtools}
\usepackage{amssymb}
\usepackage{amsthm}
\usepackage{graphicx}
\usepackage{mathrsfs}
\usepackage{microtype}
\usepackage[margin=1in]{geometry}
\usepackage[doublespacing]{setspace}
\usepackage{xcolor}
\usepackage{tikz}
\usetikzlibrary{arrows.meta,positioning}
\usepackage[authoryear,round]{natbib}
\usepackage[colorlinks=true,linkcolor=blue,citecolor=blue,urlcolor=blue]{hyperref}
\numberwithin{equation}{section}
\allowdisplaybreaks[3]

\AtBeginDocument{%
  \setlength{\abovedisplayskip}{5pt plus 2pt minus 2pt}%
  \setlength{\belowdisplayskip}{5pt plus 2pt minus 2pt}%
  \setlength{\abovedisplayshortskip}{3pt plus 1pt minus 1pt}%
  \setlength{\belowdisplayshortskip}{4pt plus 1pt minus 1pt}%
  \setlength{\jot}{2pt}%
  \setlength{\arraycolsep}{3pt}%
  \setlength{\multlinegap}{8pt}%
}

\theoremstyle{plain}
\newtheorem{theo}{Theorem}[section]
\newtheorem{prop}[theo]{Proposition}
\newtheorem{lemm}[theo]{Lemma}
\newtheorem{coro}[theo]{Corollary}

\theoremstyle{definition}
\newtheorem{defi}[theo]{Definition}

\newtheorem{assu}[theo]{Assumption}
\newtheorem{mainassu}{Assumption}
\theoremstyle{remark}
\newtheorem{rema}[theo]{Remark}

\DeclareMathOperator{\KL}{KL}
\DeclareMathOperator{\Tr}{Tr}
\DeclareMathOperator{\Cov}{Cov}

\newcommand{\R}{\mathbb R}
\newcommand{\E}{\mathbb E}

\newcommand{\bbeta}{{\boldsymbol{\beta}}}
\newcommand{\cX}{\mathcal X}
\newcommand{\Pbar}{\overline P}
\newcommand{\Rbar}{\overline R}

\begin{document}

\title{Forward-Evolution Error Analysis and Adaptive Design for Matrix-Valued Diffusion Models}

\author{%
Tongyao Pang\textsuperscript{1}\thanks{Corresponding author. Email: \href{mailto:typang@tsinghua.edu.cn}{typang@tsinghua.edu.cn}.}\quad
Zuowei Shen\textsuperscript{2}\thanks{Email: \href{mailto:matzuows@nus.edu.sg}{matzuows@nus.edu.sg}.}\quad
Ruitong Zhang\textsuperscript{3}\thanks{Email: \href{mailto:zrt22@mails.tsinghua.edu.cn}{zrt22@mails.tsinghua.edu.cn}.}\\[0.25em]
{\small \textsuperscript{1}Yau Mathematical Sciences Center, Tsinghua University}\\[-0.1em]
{\small \textsuperscript{2}Department of Mathematics, National University of Singapore}\\[-0.1em]
{\small \textsuperscript{3}Department of Mathematical Sciences, Tsinghua University}}
\date{}

\renewcommand{\thefootnote}{\fnsymbol{footnote}}
\maketitle
\renewcommand{\thefootnote}{\arabic{footnote}}
\setcounter{footnote}{0}

\begin{abstract}
Diffusion models learn to reverse a predefined corruption process, but sampling
still requires a costly time discretization and depends on the chosen noise
schedule. We study these two issues for variance-preserving diffusions with
matrix-valued schedules. Our analysis transfers reverse-time discretization
errors to the forward corruption law and treats two numerical schemes within a
common framework. The first freezes the score and yields, through a
matrix-sensitive local comparison and forward information dissipation, an
ambient-dimensional step complexity with leading factor
\(d/\varepsilon^2\) for KL accuracy \(\varepsilon^2\). The second keeps the
known Gaussian drift exact and freezes the posterior mean. For data of
metric-entropy dimension \(k\), a forward Markov identity, an anisotropic
covering estimate, and Stieltjes integration by parts give the corresponding
factor \(k\log k/\varepsilon^2\). In both cases, the proof identifies a local
error, accumulates it through the forward evolution, and inserts the result
into a common KL decomposition. The local errors further provide directional
criteria for matrix schedules and an asymptotically optimal square-root
adaptive grid. A high-dimensional Gaussian-mixture experiment illustrates the
resulting schedule and grid improvements.
\end{abstract}

\noindent\textbf{Keywords:} diffusion models, matrix-valued noise schedules, adaptive grids, intrinsic dimension, posterior mean, Kullback-Leibler divergence

\section{Introduction}

Generative models have become a central tool for learning and sampling complex
high-dimensional distributions. Their progress is especially visible in image
generation, where modern models synthesize diverse, high-resolution images and
support large-scale text-conditioned generation
\cite{rombach2022latent,blackforestlabs2024flux}. Autoregressive models directly
factorize the data distribution and generate one variable at a time
\cite{oord2016pixel}. Many other prominent approaches instead begin with a
simple latent or reference distribution and learn a mechanism that carries it
toward the data law. Variational autoencoders learn a latent-variable model,
normalizing flows learn invertible transports in discrete or continuous time,
and generative adversarial networks learn through a two-player objective
\cite{kingma2014autoencoding,dinh2017realnvp,chen2018neuralode,
goodfellow2014generative}.
Diffusion models have emerged as another leading instance of this transport
view and have enabled high-quality image generation at scale
\cite{sohl2015deep,ho2020denoising,song2020score}.

Diffusion models differ structurally from these alternatives because the map
from data toward the reference law is prescribed rather than learned. A forward
process gradually corrupts data into noise, and the generative model only has
to learn how to reverse this known evolution. For Gaussian corruptions, a
training pair at any noise level can be sampled directly from a clean datum and
independent Gaussian noise. Denoising score matching then reduces learning the
reverse drift to a supervised conditional-prediction problem
\cite{vincent2011connection,song2019generative}. Training therefore requires no
simulation of the reverse dynamics and is decoupled from the numerical sampler
used at generation time. This predefined, simulation-free training mechanism
is one reason diffusion objectives are simple to implement and scale.

The same separation leaves two important design problems at sampling time.
First, reversing the corruption is sequential and usually requires many model
evaluations, so the reverse-time discretization controls generation cost.
Second, the forward process itself must be chosen. Its noise schedule determines
the intermediate distributions seen in training, the rate of terminal
Gaussianization, and the difficulty of the reverse dynamics. Scalar schedules
control only the global noise level and time parameterization. Matrix-valued
schedules can additionally allocate contraction and noise across directions.
This motivates us to study sampling complexity together with the design of the
matrix schedule and the reverse-time grid.

Let \(q_0\) be the data law on \(\R^d\). We consider the variance-preserving Ornstein--Uhlenbeck diffusion
\begin{equation}\label{eq:intro-forward-SDE}
dX_u=-\frac12\bbeta(u)X_u\,du+\bbeta(u)^{1/2}dW_u,
\qquad X_0\sim q_0,
\end{equation}
where \(u\) is forward time and \(\bbeta(u)\) is symmetric positive definite.
The standard Gaussian law \(\pi=N(0,I)\) is invariant. A scalar schedule changes
only the speed of the OU flow. A matrix schedule also changes its directional
geometry, and a noncommuting schedule cannot be reduced to a scalar time
change. This additional freedom creates a coupled design problem because the
schedule affects terminal mixing, prediction geometry, and discretization.

The convergence theory of diffusion samplers separates prediction
approximation, time discretization, and terminal initialization. Existing
results provide polynomial guarantees under increasingly weak assumptions and,
for the isotropic OU process, nearly linear dependence on the ambient dimension
\cite{lee2022convergence,chen2022sampling,chen2023improved,benton2023nearly}.
The sharp ambient-dimensional analysis in \cite{benton2023nearly} uses
stochastic localization to control posterior covariance along the corruption
path. Related work obtains intrinsic-dimensional rates for subspace, manifold,
and metric-entropy models
\cite{chen2023lowdimensional,tang2024adaptivity,liyan2024adapting,huang2026ddpm}.
These results largely fix the forward process and focus on the resulting
sampler. Our starting point is instead the structural advantage that makes
diffusion training simple: the forward evolution is known and directly
simulable. We transfer the reverse-time discretization error to the forward
corruption law, where it can be accumulated using forward identities. For score
freezing this gives a concise alternative to a localization argument, and the
resulting forward-marginal functionals also reveal how the schedule and grid
enter the error.

The prediction target changes the numerical scheme. At a fixed noise
level, the score and the posterior mean
\(m_u(x)=\E[X_0\mid X_u=x]\), the minimum mean-square error (MMSE) estimator of
\(X_0\), are related by Tweedie's identity. Freezing the full score, however,
is not the same as freezing only \(m_u\) while retaining the known
time-dependent Gaussian drift. The second scheme is particularly natural for
low-dimensional data because the clean prediction remains tied to the data
geometry, and it is consistent with empirical evidence favoring clean-image
prediction \cite{li2026back}. We analyze both schemes. Neither is claimed to
dominate uniformly: score freezing gives a canonical ambient-dimensional
baseline under weak assumptions, while posterior-mean freezing exposes
intrinsic-dimensional adaptation under additional geometric assumptions.

\subsection{Main contributions}

We study two reverse-time discretizations: one freezes the full score, while
the other integrates the Gaussian linear drift and freezes the posterior mean.
Let \(u_t:=T-t\) and \(u_i:=T-t_i\). The matrices \(L_u,D_u\), defined in
Section~2, give the posterior-mean representation of the exact reverse drift.
Figure~\ref{fig:error-decomposition-processes} displays the two approximation
and discretization paths.
\begin{samepage}
Their output laws, denoted by
\(p_{T-t_{\min}}\) for score freezing and \(\widehat p_{T-t_{\min}}\) for
posterior-mean freezing, satisfy
\begin{equation}\label{eq:intro-error-decomposition}
\begin{aligned}
\KL(q_{t_{\min}}\|p_{T-t_{\min}})
&\leq
\underbrace{\epsilon_{\mathrm{score}}^2}_{\text{score approximation}}
+\underbrace{\operatorname{Disc}_{\mathrm{score}}(\mathcal T_N)}
_{\text{score-freezing discretization}}
+\underbrace{\KL(q_T\|\pi)}_{\text{terminal initialization}},\\
\KL(q_{t_{\min}}\|\widehat p_{T-t_{\min}})
&\leq
\underbrace{\epsilon_{\mathrm{pm}}^2}_{\text{posterior-mean approximation}}
+\underbrace{\operatorname{Disc}_{\mathrm{pm}}(\mathcal T_N)}
_{\text{posterior-mean-freezing discretization}}
+\underbrace{\KL(q_T\|\pi)}_{\text{terminal initialization}}.
\end{aligned}
\end{equation}
\end{samepage}

\begin{figure}[!t]
\centering
\begin{tikzpicture}[
  wide/.style={
    draw=black!45,
    fill=black!1,
    text width=0.86\textwidth,
    inner xsep=6pt,
    inner ysep=4pt,
    align=left,
    font=\footnotesize
  },
  branch/.style={
    wide,
    text width=0.405\textwidth,
    font=\scriptsize
  },
  init/.style={
    draw=black!35,
    fill=black!1,
    text width=0.405\textwidth,
    inner xsep=4pt,
    inner ysep=3pt,
    align=center,
    font=\scriptsize
  },
  relation/.style={
    -{Stealth[length=2mm]},
    draw=cyan!55!black,
    line width=0.85pt
  },
  tag/.style={
    font=\scriptsize,
    text=black,
    fill=white,
    inner sep=1pt
  }
]
\node[wide] (original) {\(
  (\mathsf O)\quad
  dX_u=-\tfrac12\bbeta(u)X_u\,du+\bbeta(u)^{1/2}dW_u,
  \qquad X_0\sim q_0.
\)};
\node[wide, below=6mm of original] (reverse) {\(
  \begin{aligned}
  (\mathsf R)\quad dY_t
  &=\{\tfrac12\bbeta(u_t)Y_t+\bbeta(u_t)h_{u_t}(Y_t)\}\,dt
    +\bbeta(u_t)^{1/2}dB_t\\
  &=\{L_{u_t}Y_t+D_{u_t}m_{u_t}(Y_t)\}\,dt
    +\bbeta(u_t)^{1/2}dB_t,
  \qquad Y_0\sim q_T.
  \end{aligned}
\)};

\node[branch, anchor=north]
  (trained-score) at ([xshift=-0.22\textwidth,yshift=-9mm]reverse.south) {\(
  \begin{aligned}
  (\mathsf R_{\rm sc})\quad dY_t^{\rm sc}
  &=\{\tfrac12\bbeta(u_t)Y_t^{\rm sc}
    +\bbeta(u_t)s_\theta(Y_t^{\rm sc},u_t)\}\,dt\\
  &\quad+\bbeta(u_t)^{1/2}dB_t,
  \qquad Y_0^{\rm sc}\sim q_T.
  \end{aligned}
\)};
\node[branch, anchor=north]
  (trained-pm) at ([xshift=0.22\textwidth,yshift=-9mm]reverse.south) {\(
  \begin{aligned}
  (\mathsf R_{\rm pm})\quad dY_t^{\rm pm}
  &=\{L_{u_t}Y_t^{\rm pm}
    +D_{u_t}\widehat m_{u_t}(Y_t^{\rm pm})\}\,dt\\
  &\quad+\bbeta(u_t)^{1/2}dB_t,
  \qquad Y_0^{\rm pm}\sim q_T.
  \end{aligned}
\)};

\node[branch, below=10mm of trained-score] (discrete-score) {\(
  \begin{aligned}
  (\mathsf D_{\rm sc}^{q_T})\quad d\widehat Y_t^{\rm sc}
  &=\{\tfrac12\bbeta(u_t)\widehat Y_t^{\rm sc}
    +\bbeta(u_t)s_\theta(\widehat Y_{t_i}^{\rm sc},u_i)\}\,dt\\
  &\quad+\bbeta(u_t)^{1/2}d\widetilde B_t,
  \qquad \widehat Y_0^{\rm sc}\sim q_T.
  \end{aligned}
\)};
\node[branch, below=10mm of trained-pm] (discrete-pm) {\(
  \begin{aligned}
  (\mathsf D_{\rm pm}^{q_T})\quad d\widehat Y_t^{\rm pm}
  &=\{L_{u_t}\widehat Y_t^{\rm pm}
    +D_{u_t}\widehat m_{u_i}(\widehat Y_{t_i}^{\rm pm})\}\,dt\\
  &\quad+\bbeta(u_t)^{1/2}d\widetilde B_t,
  \qquad \widehat Y_0^{\rm pm}\sim q_T.
  \end{aligned}
\)};

\node[init, below=9mm of discrete-score] (init-score) {\(
  (\mathsf D_{\rm sc}^{\pi}):\ \text{same dynamics as }
  (\mathsf D_{\rm sc}^{q_T}),\quad \widehat Y_0^{\rm sc}\sim\pi.
\)};
\node[init, below=9mm of discrete-pm] (init-pm) {\(
  (\mathsf D_{\rm pm}^{\pi}):\ \text{same dynamics as }
  (\mathsf D_{\rm pm}^{q_T}),\quad \widehat Y_0^{\rm pm}\sim\pi.
\)};

\draw[relation] (original.south) --
  node[midway,right=1mm,tag]{time reversal} (reverse.north);
\draw[relation] (reverse.south) --
  node[pos=0.55,above left=-1pt,tag]{score approximation}
  (trained-score.north);
\draw[relation] (reverse.south) --
  node[pos=0.55,above right=-1pt,tag]{posterior-mean approximation}
  (trained-pm.north);
\draw[relation] (trained-score.south) --
  node[midway,right=1mm,tag]{score freezing} (discrete-score.north);
\draw[relation] (trained-pm.south) --
  node[midway,right=1mm,tag]{posterior-mean freezing} (discrete-pm.north);
\draw[relation] (discrete-score.south) --
  node[midway,right=1mm,tag]{initialization} (init-score.north);
\draw[relation] (discrete-pm.south) --
  node[midway,right=1mm,tag]{initialization} (init-pm.north);
\end{tikzpicture}
\caption{Error decomposition for the score and posterior-mean branches.
Time reversal gives the exact reverse process. Prediction approximation changes
the drift target, freezing produces the time-discretized process, and replacing
the exact terminal law \(q_T\) by \(\pi\) adds initialization error.}
\label{fig:error-decomposition-processes}
\end{figure}
The approximation terms in \eqref{eq:intro-error-decomposition} are learning
budgets, and the initialization term is common to both branches. Our treatment
of the two discretization terms follows the same three-step architecture.

\paragraph{A unified forward analysis.}
First, we identify the local error created by freezing the prediction target.
For score freezing, a conditional-score identity gives a matrix-sensitive
comparison involving a one-time weighted score energy and the schedule
transition matrix. For posterior-mean freezing, the forward Markov property
turns the local error into an increment of clean-prediction covariance. Second,
we accumulate these local errors on a relative grid. Both arguments integrate a
positive Stieltjes measure by parts. The score branch uses weighted
Fisher-information dissipation, while the posterior-mean branch combines
covariance increments with a metric-entropy bound. These different local
quantities lead respectively to ambient- and intrinsic-dimensional dependence.
Third, we insert the cumulative estimates into the common KL decomposition
\eqref{eq:intro-error-decomposition}. The proof therefore remains on the
forward corruption law until the final comparison with the generated
distribution.

The resulting step complexities have a simple dimension--accuracy form. Let
\(d\) be the ambient dimension, let \(k\) be the metric-entropy dimension, and
let \(\varepsilon^2\) be the target KL accuracy. Under the assumptions and
parameter choices of Theorems~\ref{theo:relative-grid-kl-complexity}
and~\ref{theo:ld-main}, and apart from the logarithmic time-truncation factors
stated there, score freezing requires order \(d/\varepsilon^2\) reverse steps,
whereas posterior-mean freezing requires order
\(k\log k/\varepsilon^2\). Thus the first scheme gives the nearly linear
ambient-dimensional order, while the second adapts the step complexity to
intrinsic dimension.

\paragraph{Schedule and grid design.}
The local analysis also supplies design criteria. Under a fixed instantaneous
noise spectrum, the score-frozen growth becomes a weighted score-Hessian
energy and gives a directional rule for matrix schedules. The posterior-mean
analysis gives a complementary subspace-alignment rule. Once the schedule is
fixed, equidistributing the square root of the local growth rate produces an
asymptotically optimal grid for the leading discretization cost. These criteria
may be evaluated analytically or estimated from forward-noised pilot samples,
without reverse simulation.

\paragraph{Scope and outlook.}
Section~6 tests these predictions on a high-dimensional Gaussian mixture whose
dominant geometry changes with the noise level. The experiment isolates the
discretization term and shows benefits from both a forward-aligned rotating
schedule and the adaptive grid over several step counts. This is a controlled
proof of concept rather than a large-scale image study. It uses accurately
evaluated mixture scores and does not test learned-predictor or pilot-estimation
error. Dense matrix schedules, pilot estimation at image scale, and the
interaction between schedule changes and model training remain open practical
questions. More broadly, the same strategy of transferring reverse errors to a
prescribed forward evolution may extend to structured stochastic interpolants,
although we do not pursue that direction here \cite{albergo2025stochastic}.

\subsection{Related work}

A central question in the theory of diffusion models is to quantify terminal
initialization, prediction approximation, and discretization under weak
assumptions. Polynomial convergence was first established for smooth targets
with dependence on a log-Sobolev constant \cite{lee2022convergence}. Subsequent
work weakened the distributional and regularity assumptions for Wasserstein,
total-variation, and KL guarantees
\cite{lee2023convergence,chen2022sampling,chen2023improved,li2024towards,conforti2025kl}.

A second line of work sharpens dimension dependence. For the standard isotropic
OU process, an \(L^2\)-accurate score yields a nearly \(d\)-linear KL guarantee
through a refined stochastic-localization treatment of reverse-time
discretization
\cite{benton2023nearly,elalaoui2022information,eldan2020taming}. Recent results
adapt to subspaces, manifolds, and metric-entropy models and obtain
intrinsic-dimensional rates under several sampler designs
\cite{chen2023lowdimensional,tang2024adaptivity,liyan2024adapting,huang2026ddpm,potaptchik2024linear}.
Our ambient result preserves the scalar worst-case order while replacing the
localization-based cumulative argument with a proof through the forward
equation and exposing matrix-schedule geometry. The posterior-mean result uses
the covering argument of \cite{huang2026ddpm} in a schedule-induced anisotropic
metric, whose Euclidean specialization recovers the standard \(k\log k\) rate.

Noise schedules, time parameterizations, and fast solvers are also algorithmic
design choices
\cite{nichol2021improved,karras2022elucidating,strasman2025noise}. A scalar
schedule is a time change of the OU flow, whereas a noncommuting matrix schedule
is not. Related probability-flow ODE solvers integrate the linear drift and
approximate weighted prediction integrals
\cite{lu2022dpmsolver,lu2025dpmsolverpp}; model statistics can also adapt their
parameterization \cite{zheng2023dpmsolverv3}. We instead study frozen-prediction
reverse-SDE discretization and use the resulting local errors to design schedule
directions and reverse-time grids.

Section~2 introduces the forward and reverse processes, the two discretization
schemes, and their common KL decomposition. Sections~3 and~4 develop the
ambient score-freezing and intrinsic posterior-mean analyses, respectively.
Section~5 first studies schedule alignment under a fixed noise budget and then
designs adaptive grids for a fixed schedule. Section~6 presents the numerical
experiment, and Section~7 concludes.

\section{Preliminaries}

This section fixes the forward model, records the equivalent score and
posterior-mean representations of its reverse process, and defines the two
frozen-prediction discretizations. Their common KL decomposition separates the
learning, discretization, and initialization terms before Sections~3 and~4
analyze the two discretization branches.

Forward time is denoted by \(u\), while reverse time is denoted by \(t\); reverse time \(t\) corresponds to the forward marginal at \(T-t\). We fix a terminal horizon \(T>0\) and an early-stopping time \(0<t_{\min}<T\). For a positive semidefinite matrix \(A\), write \(\|v\|_A^2:=v^TAv\).

\subsection{Forward process}

Let \(q_u\) denote the law of the forward process \eqref{eq:intro-forward-SDE}, let \(h_u=\nabla\log q_u\) be its score for \(u>0\), and let \(\pi=N(0,I)\). We write
\(\Sigma_u:=\E[X_uX_u^T]\) for the second-moment matrix.

\begin{mainassu}[Baseline model]\label{assu:baseline}
The data law satisfies
\[
M_2:=\E\|X_0\|^2<\infty.
\]
The schedule \(\bbeta\) is continuous and symmetric positive definite. For constants \(m_\bbeta,B_\bbeta,L_\bbeta>0\), independent of \(T\),
\[
m_\bbeta I\preceq\bbeta(u)\preceq B_\bbeta I,
\]
and \(\bbeta\) is piecewise \(C^1\) on \([t_{\min},T]\), with \(\|\bbeta'(u)\|_2\leq L_\bbeta\) on each smooth interval. When \(T\to\infty\), the same bounds hold uniformly on \([0,\infty)\).
\end{mainassu}

The matrix schedule first enters through the transition matrix
\begin{equation}\label{eq:Phi}
\partial_u\Phi(u,v)=-\frac12\bbeta(u)\Phi(u,v),
\qquad
\Phi(v,v)=I.
\end{equation}
For the transition from the data law, write
\[
A_u:=\Phi(u,0),
\qquad
\Gamma_u:=I-A_uA_u^T.
\]
The forward marginal has the explicit Gaussian representation
\begin{equation}\label{eq:forward-channel}
X_u=A_uX_0+\Gamma_u^{1/2}Z,
\qquad Z\sim N(0,I),
\end{equation}
where \(Z\) is independent of \(X_0\).
Assumption~\ref{assu:baseline} implies
\[
\|\Phi(T,0)\|_2^2
\leq \exp\left\{-\int_0^T\lambda_{\min}(\bbeta(u))\,du\right\}
\leq e^{-m_\bbeta T}.
\]
Hence \(\Phi(T,0)\to0\), the forward noise covariance converges to \(I\), and
Lemma~\ref{lem:terminal-mixing} gives \(\KL(q_T\|\pi)\to0\).
If \(0\leq v\leq u\leq T\) and all matrices \(\bbeta(r)\) commute, then
\[
\Phi(u,v)=\exp\left(-\frac12\int_v^u\bbeta(r)dr\right).
\]
In the non-commutative case, this scalar time-change representation is unavailable. The transition matrix \(\Phi(u,v)\) must therefore be kept as part of the schedule geometry.

\subsection{Reverse discretization}

For \(u>0\), define the posterior mean and the schedule matrices
\[
m_u(x):=\E[X_0\mid X_u=x],
\qquad
L_u:=\frac12\bbeta(u)-\bbeta(u)\Gamma_u^{-1},
\qquad
D_u:=\bbeta(u)\Gamma_u^{-1}A_u.
\]
For the posterior-mean parameterization, also set
\begin{equation}\label{eq:ld-S-W}
S_u:=A_u^T\Gamma_u^{-1}A_u,
\qquad
W_u:=-S_u'
=A_u^T\Gamma_u^{-1}\bbeta(u)\Gamma_u^{-1}A_u
\succeq0.
\end{equation}
Here \(S_u\) is the Gram matrix of the signal map after whitening the forward
noise, and \(W_u\) is its rate of decay. In particular,
\(S_u\preceq S_v\) whenever \(u\geq v>0\).
Differentiating the Gaussian mixture density in
\eqref{eq:forward-channel} gives the matrix Tweedie identity and its associated
Girsanov weight,
\begin{equation}\label{eq:ld-tweedie}
h_u(x)=\Gamma_u^{-1}\{A_um_u(x)-x\},
\qquad
D_u^T\bbeta(u)^{-1}D_u=W_u.
\end{equation}
The scalar formula is classical \citep{efron2011tweedie}; details for the
matrix form are given in Appendix~\ref{app:intrinsic-proofs}.
Following the time-reversal formula for diffusions
\cite{anderson1982reverse,cattiaux2023time}, the exact reverse drift has the
equivalent score and posterior-mean representations
\begin{equation}\label{eq:true-reverse}
\begin{aligned}
dY_t&=b_u(Y_t)\,dt+\bbeta(u)^{1/2}dB'_t,
\qquad u=T-t,\qquad Y_0\sim q_T,\\
b_u(y)
&:=\frac12\bbeta(u)y+\bbeta(u)h_u(y)
=L_uy+D_um_u(y).
\end{aligned}
\end{equation}
Let \(Q\) denote its path law with \(Y_0\sim q_T\). Then \((Y_t)_{0\leq t\leq T}\) under \(Q\) has the same law as \((X_{T-t})_{0\leq t\leq T}\). In particular, for every integrable measurable function \(\phi\),
\[
\E_Q[\phi(t,Y_t)]=\E_{X_{T-t}}[\phi(t,X_{T-t})].
\]
Here and below, \(\E_Q\) denotes expectation over the reverse path \(Y\) under \(Q\), whereas subscripts involving \(X\) identify the forward random variables being integrated.

For a reverse-time grid \(0=t_0<\cdots<t_N=T-t_{\min}\), define
\[
u_i:=T-t_i,
\qquad
\eta_i:=u_i-u_{i+1}=t_{i+1}-t_i.
\]
Thus \(T=u_0>\cdots>u_N=t_{\min}\).

\begin{assu}[Relative mesh]\label{assu:relative-grid}
Suppose \(0<t_{\min}\leq1\leq T\) and, for some \(0<\kappa\leq1\),
\begin{equation}\label{eq:relative-grid}
\eta_i\leq\kappa\min\{1,u_{i+1}\},
\qquad i=0,\ldots,N-1.
\end{equation}
\end{assu}

For \(N\)-step complexity bounds, we use the rate-regular subclass satisfying
\begin{equation}\label{eq:relative-grid-rate}
\kappa\leq\frac{C_{\mathrm{grid}}}{N}
\left(T+\log\frac1{t_{\min}}\right),
\end{equation}
where \(C_{\mathrm{grid}}\) is independent of
\(d,k,N,T,t_{\min}\). As a concrete example, consider the hybrid grid that is
uniform in
\[
\psi(u):=
\begin{cases}
\log u,&0<u\leq1,\\
u-1,&u\geq1,
\end{cases}
\qquad
\psi(u_i)=\psi(T)
-\frac{i}{N}\left(T-1+\log\frac1{t_{\min}}\right).
\]
Write
\(\Delta_N:=\{T-1+\log(1/t_{\min})\}/N\). Its intervals satisfy
\(\eta_i=\Delta_N\) above \(u=1\) and
\(\eta_i=(e^{\Delta_N}-1)u_{i+1}\) below \(u=1\); the crossing interval obeys
the same relative bound up to a universal constant. Consequently, if
\(\Delta_N\leq c_0\) for a sufficiently small universal constant \(c_0\),
then Assumption~\ref{assu:relative-grid} holds with
\begin{equation}\label{eq:hybrid-relative-rate}
\kappa=C\Delta_N\leq1,
\qquad
\kappa
\leq\frac{C}{N}\left(T+\log\frac1{t_{\min}}\right).
\end{equation}

All path-space KL comparisons below assume that the displayed drift-difference
energies are finite. If a direct Novikov condition is unavailable, Girsanov's
formula follows by standard stopping-time truncation.

For a learned score \(s_\theta\) and a learned posterior mean \(\widehat m\),
consider the two discretized reverse processes, for
\(t\in[t_i,t_{i+1})\),
\begin{align}
d\widehat Y_t^{\mathrm{score}}
&=
\left\{
\frac12\bbeta(u)\widehat Y_t^{\mathrm{score}}
+\bbeta(u)s_\theta(\widehat Y_{t_i}^{\mathrm{score}},u_i)
\right\}dt
+\bbeta(u)^{1/2}d\widetilde B_t,
\qquad
\widehat Y_0^{\mathrm{score}}\sim\pi,
\label{eq:score-frozen-sampler}\\
d\widehat Y_t^{\mathrm{pm}}
&=
\left\{
L_u\widehat Y_t^{\mathrm{pm}}
+D_u\widehat m_{u_i}(\widehat Y_{t_i}^{\mathrm{pm}})
\right\}dt
+\bbeta(u)^{1/2}d\widetilde B_t,
\qquad
\widehat Y_0^{\mathrm{pm}}\sim\pi,
\label{eq:ld-sampler}
\end{align}
where \(u=T-t\). Denote their output laws by \(p_{T-t_{\min}}\) and
\(\widehat p_{T-t_{\min}}\), respectively. The score sampler freezes the whole
learned score. The posterior-mean sampler keeps \(L_u,D_u\) time dependent and
freezes only the learned clean-data predictor. Although the two prediction
targets are related pointwise by \eqref{eq:ld-tweedie}, these freezing rules
define different numerical schemes; no uniform ordering between their errors
is assumed.

Their integrated approximation errors are
\begin{align}
\operatorname{Err}_{\mathrm{score}}
&:=
\sum_{i=0}^{N-1}\int_{u_{i+1}}^{u_i}
\E_{X_{u_i}}
\bigl\|h_{u_i}(X_{u_i})-s_\theta(X_{u_i},u_i)\bigr\|_{\bbeta(u)}^2du
\leq\epsilon_{\mathrm{score}}^2,
\label{eq:score-error-budget}\\
\operatorname{Err}_{\mathrm{pm}}
&:=
\frac12\sum_{i=0}^{N-1}\int_{u_{i+1}}^{u_i}
\E_{X_{u_i}}
\bigl\|m_{u_i}(X_{u_i})-\widehat m_{u_i}(X_{u_i})\bigr\|_{W_u}^2du
\leq\epsilon_{\mathrm{pm}}^2.
\label{eq:ld-estimation-error}
\end{align}
These are input error budgets, not regularity assumptions. In particular,
\(\operatorname{Err}_{\mathrm{pm}}\) measures posterior-mean approximation,
not score approximation.

Under the joint forward law, define the corresponding exact-prediction
discretization errors by
\begin{align}
\operatorname{Disc}_{\mathrm{score}}(\mathcal T_N)
&:=
\sum_{i=0}^{N-1}\int_{u_{i+1}}^{u_i}
\E
\left[
\left\|h_u(X_u)-h_{u_i}(X_{u_i})\right\|_{\bbeta(u)}^2
\right]du,
\label{eq:score-disc-definition}\\
\operatorname{Disc}_{\mathrm{pm}}(\mathcal T_N)
&:=
\frac12\sum_{i=0}^{N-1}\int_{u_{i+1}}^{u_i}
\E
\left[
\left\|m_u(X_u)-m_{u_i}(X_{u_i})\right\|_{W_u}^2
\right]du.
\label{eq:ld-disc}
\end{align}

For the posterior-mean branch, the Markov property and iterated conditioning
along \(X_0\to X_u\to X_v\) give, for \(0<u\leq v\),
\begin{equation}\label{eq:ld-markov-identity}
\E[m_u(X_u)\mid X_v]=m_v(X_v).
\end{equation}

The score decomposition is standard
\citep[proof of Theorem~1]{benton2023nearly}. The posterior-mean decomposition
uses the same Girsanov argument together with the forward Markov identity in
\eqref{eq:ld-markov-identity}. Both proofs are given in
Appendix~\ref{app:kl-decomposition-proof}.
\begin{theo}[KL error decompositions]\label{theo:kl-decomposition}
If the approximation budgets above hold, then
\begin{align}
\KL(q_{t_{\min}}\|p_{T-t_{\min}})
&\leq
\epsilon_{\mathrm{score}}^2
+\operatorname{Disc}_{\mathrm{score}}(\mathcal T_N)
+\KL(q_T\|\pi),
\label{eq:score-kl-decomposition}\\
\KL(q_{t_{\min}}\|\widehat p_{T-t_{\min}})
&\leq
\epsilon_{\mathrm{pm}}^2
+\operatorname{Disc}_{\mathrm{pm}}(\mathcal T_N)
+\KL(q_T\|\pi).
\label{eq:ld-kl-decomposition}
\end{align}
\end{theo}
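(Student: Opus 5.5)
The plan is to compare path laws on \([0,T-t_{\min}]\) and then project to the final marginal. Let \(Q\) be the law of the exact reverse process \eqref{eq:true-reverse}. Let \(P^{\pi}\) and \(P^{q_T}\) be the path laws of the discretized sampler (either \eqref{eq:score-frozen-sampler} or \eqref{eq:ld-sampler}) started from \(\pi\) and from \(q_T\), respectively.

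\textbf{Step 1: reduce to a path-space KL.} The data-processing inequality applied to the time-\((T-t_{\min})\) evaluation map gives \(\KL(q_{t_{\min}}\|p_{T-t_{\min}})\leq \KL(Q\|P^{\pi})\). The chain rule for KL, disintegrating with respect to \(Y_0\), then gives
\[
\KL(Q\|P^{\pi})=\KL(q_T\|\pi)+\E_{y\sim q_T}\KL(Q_y\|P_y),
\]
where \(Q_y\) and \(P_y\) are the laws conditioned on \(Y_0=y\). The two processes share the same transition kernels given the start, so the last term equals \(\KL(Q\|P^{q_T})\). Both processes have the same nondegenerate diffusion coefficient \(\bbeta(u)^{1/2}\). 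The sampler's drift on \([t_i,t_{i+1})\) depends on the path only through \(Y_t\) and \(Y_{t_i}\), so it is adapted. Girsanov's theorem (with the stopping-time localization mentioned before the statement when Novikov fails) therefore yields
\[
\KL(Q\|P^{q_T})=\frac12\sum_{i=0}^{N-1}\int_{t_i}^{t_{i+1}}\E_Q\bigl\|b_{u}(Y_t)-\widehat b_{t}(Y_t,Y_{t_i})\bigr\|^2_{\bbeta(u)^{-1}}dt .
\]
Since \((Y_t,Y_{t_i})\) under \(Q\) has the joint law of \((X_u,X_{u_i})\) under the forward process, every expectation can be rewritten on the forward corruption law. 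The change of variables \(u=T-t\) turns each integral over \([t_i,t_{i+1}]\) into one over \([u_{i+1},u_i]\).

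\textbf{Step 2: score branch.} The linear parts \(\frac12\bbeta(u)y\) cancel, leaving \(\bbeta(u)\{h_u(X_u)-s_\theta(X_{u_i},u_i)\}\). Its \(\bbeta(u)^{-1}\)-norm squared is \(\|h_u(X_u)-s_\theta(X_{u_i},u_i)\|^2_{\bbeta(u)}\). Insert \(\pm h_{u_i}(X_{u_i})\) and apply \(\|a+b\|^2\leq2\|a\|^2+2\|b\|^2\). The factor \(2\) cancels the \(\frac12\), so the two resulting terms are exactly \(\operatorname{Disc}_{\mathrm{score}}(\mathcal T_N)\) and \(\operatorname{Err}_{\mathrm{score}}\leq\epsilon_{\mathrm{score}}^2\). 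Here we do not expect a cross-term cancellation: \(\E[h_u(X_u)\mid X_{u_i}]\) is not \(h_{u_i}(X_{u_i})\) in general. This is consistent with the absence of a \(\frac12\) in \eqref{eq:score-error-budget} and \eqref{eq:score-disc-definition}.

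\textbf{Step 3: posterior-mean branch.} Now the \(L_uy\) terms cancel, and the drift gap is \(D_u\{m_u(X_u)-\widehat m_{u_i}(X_{u_i})\}\). By the Girsanov weight in \eqref{eq:ld-tweedie}, \(D_u^T\bbeta(u)^{-1}D_u=W_u\), so the integrand is \(\|m_u(X_u)-\widehat m_{u_i}(X_{u_i})\|^2_{W_u}\). Split this as \(a+b\) with
\[
a=m_u(X_u)-m_{u_i}(X_{u_i}),\qquad b=g(X_{u_i}),\qquad g:=m_{u_i}-\widehat m_{u_i},
\]
and expand the square exactly. The cross term is \(2\E\langle a,W_u\,g(X_{u_i})\rangle\), with \(W_u\) deterministic and \(b\) a function of \(X_{u_i}\) alone. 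Conditioning on \(X_{u_i}\) and using \eqref{eq:ld-markov-identity} with \(u\leq u_i\) gives \(\E[a\mid X_{u_i}]=0\), so the cross term vanishes. Keeping the overall \(\frac12\), the two remaining pieces are exactly \(\operatorname{Disc}_{\mathrm{pm}}(\mathcal T_N)\) and \(\operatorname{Err}_{\mathrm{pm}}\leq\epsilon_{\mathrm{pm}}^2\).

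\textbf{Main obstacle.} The algebra is routine; the delicate point is rigor in Step 1. We must justify Girsanov and the KL chain rule for \(Q\) versus \(P^{q_T}\) when the drift is only known to have finite energy, not to satisfy Novikov. The plan is to localize with stopping times \(\tau_R\) at which the accumulated drift-difference energy exceeds \(R\), apply Girsanov on \([0,\tau_R]\), and let \(R\to\infty\) using lower semicontinuity of KL. The assumed finiteness of the displayed energies guarantees \(\tau_R\to T-t_{\min}\) almost surely. A secondary technical check is the cross term in Step 3: we need \(m_u(X_u)\) and \(W_u^{1/2}g(X_{u_i})\) to be square integrable so that the conditioning is legitimate. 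The first follows from \(M_2<\infty\); the second is implied by the finite budget \(\operatorname{Err}_{\mathrm{pm}}\).
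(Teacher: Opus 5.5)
Your proposal is correct and follows the paper's proof essentially step for step. It uses data processing and the chain rule to split off $\KL(q_T\|\pi)$, then Girsanov on path space, then the crude bound $\tfrac12\|x+y\|_A^2\leq\|x\|_A^2+\|y\|_A^2$ in the score branch, and finally the Markov identity \eqref{eq:ld-markov-identity} to remove the cross term in the posterior-mean branch. One cosmetic point: after stopping-time localization and lower semicontinuity, your Girsanov display should read ``$\leq$'' rather than ``$=$'', and the inequality is all the argument needs.
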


\paragraph{Low-dimensional support.}
The intrinsic analysis in Section~4 uses the following metric-entropy model;
these assumptions are not imposed in Section~3. Following
\cite{huang2026ddpm}, let \(\cX:=\operatorname{supp}(q_0)\subset\R^d\) be the
closure of the intersection of all measurable sets \(\cX'\) with
\(\Pr(X_0\in\cX')=1\). For \(x\in\R^d\) and \(r>0\), write
\(B(x,r):=\{y\in\R^d:\|y-x\|_2\leq r\}\). The Euclidean covering number
\(N_{\mathrm{cover}}(\cX,\|\cdot\|_2,\varepsilon)\) is the smallest number of
radius-\(\varepsilon\) balls whose union contains \(\cX\), and
\begin{equation}\label{eq:metric-entropy-definition}
\mathsf H_{\cX}(\varepsilon)
:=\log N_{\mathrm{cover}}(\cX,\|\cdot\|_2,\varepsilon)
\end{equation}
is its metric entropy \citep[Chapter~5]{wainwright2019high}.

\begin{assu}[Intrinsic dimension]\label{assu:ld-entropy}
Let \(k\geq2\), let \(C_0>0\) be a sufficiently large universal constant, and
set \(\varepsilon_0:=k^{-C_0}\). For a universal constant
\(C_{\mathrm{cover}}>0\), assume
\begin{equation}\label{eq:ld-covering}
\mathsf H_{\cX}(\varepsilon_0)
\leq C_{\mathrm{cover}}k\log\frac1{\varepsilon_0}.
\end{equation}
\end{assu}

\begin{assu}[Bounded support]\label{assu:ld-radius}
For a universal constant \(C_R>0\), assume
\begin{equation}\label{eq:ld-radius}
\sup_{x\in\cX}\|x\|_2\leq R,
\qquad R:=k^{C_R}.
\end{equation}
\end{assu}

These assumptions match the low-dimensional setting of
\cite{huang2026ddpm}. Assumption~\ref{assu:ld-entropy} controls the number of
distinguishable data points at the fixed resolution \(\varepsilon_0\), while
Assumption~\ref{assu:ld-radius} is used only to express the final step
complexity in terms of \(k\). For context, the doubling dimension of a set
\(\mathcal M\) is the smallest \(s\) such that every
\(B(x,r)\cap\mathcal M\) can be covered by \(2^s\) balls of radius \(r/2\).
If \(\cX\) lies within distance \(\varepsilon_0\) of a bounded set of doubling
dimension \(k\), then \eqref{eq:ld-covering} follows
\citep[Lemma~2]{huang2026ddpm}. Bounded subsets of \(k\)-dimensional linear
subspaces satisfy the same condition by a volumetric bound
\citep[Section~4.2.1]{vershynin2018high}.

\subsection{Information identities}

Let \(\gamma\) be the density of \(\pi\). For \(u>0\), define
\[
r_u(x):=\frac{q_u(x)}{\gamma(x)},\qquad
\zeta_u(x):=\nabla\log r_u(x),\qquad
H_u(x):=\nabla^2\log r_u(x).
\]

The cumulative estimates use the dissipation of relative entropy along the forward OU flow. For a symmetric matrix \(A\), define the weighted relative Fisher functional
\begin{equation}\label{eq:IA}
I_A(u):=\int q_u(x)\left\langle A\nabla\log r_u(x),\nabla\log r_u(x)\right\rangle dx.
\end{equation}
The choices used below are \(A=\bbeta(u)\), \(A=\bbeta'(u)\), and \(A=\bbeta(u)^2\). We also define
\begin{equation}\label{eq:Jbeta}
J_\bbeta(u):=\int q_u(x)\left\|\bbeta(u)^{1/2}H_u(x)\bbeta(u)^{1/2}\right\|_F^2dx.
\end{equation}
All these quantities are evaluated at positive forward times, after the OU transition has smoothed the data law. Thus no Fisher-information assumption is imposed on \(q_0\).

The infinitesimal generator of the forward equation relative to the Gaussian reference measure is
\begin{equation}\label{eq:Lt}
\mathcal L_u\phi=\frac12\Tr(\bbeta(u)\nabla^2\phi)-\frac12\langle \bbeta(u)x,\nabla\phi\rangle.
\end{equation}

The forward equation and Gaussian integration by parts give the two dissipation identities below. Their derivations are collected in Appendix~\ref{app:technical-identities}.

\begin{prop}[KL dissipation]\label{prop:kl-dissipation}
For \(u>0\),
$$\frac{d}{du}\KL(q_u\|\pi)=-\frac12I_\bbeta(u).$$
\end{prop}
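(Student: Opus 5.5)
The plan is to differentiate the relative entropy written in terms of the density ratio \(r_u=q_u/\gamma\). From \(\KL(q_u\|\pi)=\int r_u\log r_u\,\gamma\,dx\) we first derive the evolution equation satisfied by \(r_u\). The forward Fokker--Planck equation for \eqref{eq:intro-forward-SDE} is
\[
\partial_u q_u=\tfrac12\nabla\cdot\bigl(\bbeta(u)\nabla q_u+\bbeta(u)xq_u\bigr)
=\tfrac12\nabla\cdot\bigl(\bbeta(u)\gamma\nabla r_u\bigr).
\]
The second form uses \(\nabla q_u+xq_u=\gamma\nabla r_u\), which holds because \(\nabla\gamma=-x\gamma\). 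Dividing by \(\gamma\) shows that \(\partial_u r_u=\mathcal L_u r_u\), with \(\mathcal L_u\) the generator in \eqref{eq:Lt}. Indeed, \(\gamma^{-1}\tfrac12\nabla\cdot(\bbeta\gamma\nabla r)=\tfrac12\Tr(\bbeta\nabla^2r)-\tfrac12\langle\bbeta x,\nabla r\rangle\).

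Next we differentiate under the integral:
\[
\frac{d}{du}\KL(q_u\|\pi)=\int(\log r_u+1)\,\partial_u r_u\,\gamma\,dx=\int(\log r_u+1)\,\mathcal L_u r_u\,\gamma\,dx.
\]
Gaussian integration by parts gives \(\int\phi\,\mathcal L_u\psi\,\gamma=-\tfrac12\int\langle\bbeta(u)\nabla\phi,\nabla\psi\rangle\gamma\). Since \(\bbeta(u)\) is constant in \(x\) and symmetric, applying this with \(\phi=\log r_u+1\) and \(\psi=r_u\) (the constant contributes nothing) yields
\[
-\tfrac12\int\langle\bbeta(u)\nabla\log r_u,\nabla r_u\rangle\gamma
=-\tfrac12\int q_u\langle\bbeta(u)\nabla\log r_u,\nabla\log r_u\rangle
=-\tfrac12 I_\bbeta(u).
\]
The second equality uses \(\nabla r_u\,\gamma=r_u\nabla\log r_u\,\gamma=q_u\nabla\log r_u\).

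The main obstacle is justifying these formal manipulations: differentiating under the integral and discarding boundary terms when integrating by parts. Only \(M_2<\infty\) is assumed, and there is no regularity of \(q_0\). For \(u>0\), however, \eqref{eq:forward-channel} gives \(q_u\) as a Gaussian convolution with nondegenerate covariance \(\Gamma_u\succ0\). Hence \(q_u\) is smooth and strictly positive, and Tweedie's identity \eqref{eq:ld-tweedie} expresses \(\nabla\log q_u\) through \(m_u\). Combined with the second-moment bound, this gives \(\int q_u\|\nabla\log r_u\|^2<\infty\) locally uniformly in \(u>0\).

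To remove the boundary terms, I would insert a smooth cutoff \(\chi_R(x)=\chi(x/R)\), carry out the integration by parts with the cutoff in place, and let \(R\to\infty\). The commutator terms are bounded by \(R^{-1}\int_{R\le|x|\le2R}q_u|\nabla\log r_u|\,(1+|\log r_u|)\), which vanishes by the moment and Fisher bounds; the bound on \(\log r_u\) comes from the Gaussian-mixture form of \(q_u\). Differentiation in \(u\) is handled the same way, by dominated convergence on compact subintervals of \((0,T]\) where \(\bbeta\) is continuous.

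A cleaner alternative is to prove the integrated identity \(\KL(q_{u}\|\pi)-\KL(q_{v}\|\pi)=-\tfrac12\int_v^u I_\bbeta(s)\,ds\) and then differentiate. This identity follows from the computation above applied on each smooth interval.
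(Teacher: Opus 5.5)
Your proposal is correct and follows essentially the same argument as the paper's proof: you rewrite the Fokker--Planck equation as $\partial_u r_u=\mathcal L_u r_u$, differentiate $\int r_u\log r_u\,d\pi$ with the constant term dropping out (the paper gets this from $\int r_u\,d\pi=1$), and apply Gaussian integration by parts against $\mathcal L_u$ to obtain $-\tfrac12 I_\bbeta(u)$. Your cutoff argument, with the quadratic growth bound on $\log r_u$ from the Gaussian-mixture form and the second-moment and Fisher tail bounds, makes explicit the ``standard approximation argument'' that the paper only cites to justify these steps at positive forward time.
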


\begin{prop}[Fisher-information evolution]\label{prop:fisher-evolution}
On each smooth interval of \(\bbeta\),
$$\frac{d}{du}I_\bbeta(u)=I_{\bbeta'}(u)-I_{\bbeta^2}(u)-J_\bbeta(u).$$
\end{prop}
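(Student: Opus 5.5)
The plan is a Bakry--Émery $\Gamma_2$ computation for the relative density $r_u=q_u/\gamma$, with $\bbeta(u)$ frozen in $x$ and its time derivative producing the extra $I_{\bbeta'}$ term. Fix a smooth interval of $\bbeta$ and $u>0$ in it.

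\textbf{Step 1: evolution of $r_u$ and $f:=\log r_u$.} The Fokker--Planck equation of \eqref{eq:intro-forward-SDE} is $\partial_u q_u=\tfrac12\nabla\cdot(\bbeta(u)\nabla q_u+\bbeta(u)xq_u)$. Since $\gamma$ is invariant for every $\bbeta(u)$ and $\mathcal L_u$ in \eqref{eq:Lt} is symmetric in $L^2(\gamma)$, dividing by $\gamma$ gives $\partial_u r_u=\mathcal L_u r_u$. This is the same computation that underlies Proposition~\ref{prop:kl-dissipation}. Introduce the carré du champ $\Gamma_u(\phi,\psi):=\langle\bbeta(u)\nabla\phi,\nabla\psi\rangle$. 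Because $\bbeta$ does not depend on $x$, the generator satisfies $\mathcal L_u(\phi\psi)=\phi\mathcal L_u\psi+\psi\mathcal L_u\phi+\Gamma_u(\phi,\psi)$, with Dirichlet form
\[
\int\gamma\,\phi\,\mathcal L_u\psi=-\tfrac12\int\gamma\,\Gamma_u(\phi,\psi).
\]
Hence $\partial_u f=\mathcal L_uf+\tfrac12\Gamma_u(f,f)$, and $I_\bbeta(u)=\int\gamma r_u\Gamma_u(f,f)$.

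\textbf{Step 2: differentiate.} Differentiating under the integral gives three contributions. The first is $\int\gamma r_u\langle\bbeta'(u)\nabla f,\nabla f\rangle=I_{\bbeta'}(u)$. The second is $\int\gamma(\mathcal L_ur_u)\Gamma_u(f,f)$. The third is $2\int\gamma r_u\Gamma_u(f,\partial_uf)$. In the second term, move $\mathcal L_u$ onto $\Gamma_u(f,f)$ by symmetry. Then repeatedly use the identity
\[
\int\gamma r_u\mathcal L_ug=-\tfrac12\int\gamma r_u\Gamma_u(f,g),
\]
which follows from $\nabla r_u=r_u\nabla f$. This collapses the non-$\bbeta'$ part to $-c\int\gamma r_u\Gamma_{2,u}(f)$, where
\[
\Gamma_{2,u}(f)=\tfrac12\mathcal L_u\Gamma_u(f,f)-\Gamma_u(f,\mathcal L_uf)
\]
is the iterated carré du champ.

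\textbf{Step 3: Bochner formula.} I then compute $\Gamma_{2,u}$ explicitly. Since $\bbeta$ is constant in $x$, the second-order part yields $\tfrac12\|\bbeta^{1/2}\nabla^2f\,\bbeta^{1/2}\|_F^2$. The commutator of $\nabla$ with the drift $-\tfrac12\bbeta x$ contributes a curvature term proportional to $\langle\bbeta^2\nabla f,\nabla f\rangle$. Since $\nabla^2f=H_u$, integrating against $q_u=\gamma r_u$ produces exactly $J_\bbeta(u)$ and $I_{\bbeta^2}(u)$.

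The bookkeeping of the factors $\tfrac12$ across the several integrations by parts is where errors creep in. I would fix all constants, and confirm the coefficients $-1,-1$, by checking against a Gaussian data law $q_0=N(0,\sigma^2I)$ with scalar constant $\bbeta$. In that case every term is explicit.

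\textbf{Main obstacle: justification.} The integrations by parts and the differentiation under the integral must be justified. For $u>0$ the representation \eqref{eq:forward-channel} makes $q_u$ a Gaussian mixture. It is smooth and positive, and because $\Gamma_u\succ0$ and $M_2<\infty$, $\nabla\log q_u$ and $\nabla^2\log q_u$ grow at most polynomially with integrable moments. I would first carry out the computation against a smooth cutoff $\chi_R(x)$. Each boundary error is controlled by $\E[(1+\|X_u\|^p)\mathbf 1_{\|X_u\|>R}]$, which tends to zero by the moment bounds. For the time derivative, I would use local uniformity in $u$ on compact subintervals of the smooth interval together with dominated convergence.
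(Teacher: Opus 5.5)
Your proposal is correct and is essentially the paper's proof. The paper also differentiates $I_\bbeta(u)=\int r_u\varphi_u\,d\pi$ into the same three terms, using $\partial_u r_u=\mathcal L_u r_u$, $\partial_u\ell_u=\mathcal L_u\ell_u+\tfrac12\varphi_u$ with $\varphi_u=\zeta_u^T\bbeta(u)\zeta_u$, Gaussian integration by parts, and the Bochner identity $\mathcal L_u\varphi_u-2\langle\bbeta(u)\zeta_u,\nabla(\mathcal L_u\ell_u)\rangle=\|\bbeta(u)^{1/2}H_u\bbeta(u)^{1/2}\|_F^2+\zeta_u^T\bbeta(u)^2\zeta_u$; this is exactly $2\Gamma_{2,u}(f)$ in your notation. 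Your bookkeeping gives $c=2$, so the non-$\bbeta'$ terms equal $-J_\bbeta(u)-I_{\bbeta^2}(u)$ directly, and the Gaussian test serves only as a sanity check.
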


\section{Ambient-dimensional analysis under score freezing}
\label{sec:ambient-analysis}

Theorem~\ref{theo:kl-decomposition} reduces the score-frozen analysis to
\(\operatorname{Disc}_{\mathrm{score}}(\mathcal T_N)\). Following the framework
outlined in the Introduction, we first identify the local freezing error, then
accumulate it through the forward evolution, and finally combine the result
with the common KL decomposition. The terminal initialization term is treated
only at the last step because it is independent of the local analysis.

\begin{mainassu}[Ambient cumulative regime]\label{assu:cumulative}
Suppose
\[
M_2\leq C_{\mathrm{mom}}d,
\qquad
\bbeta'(u)\preceq0,
\]
where the derivative condition holds on each smooth interval.
\end{mainassu}
The monotonicity condition simplifies the cumulative proof;
Appendix~\ref{app:cumulative-auxiliary} gives the corresponding
integrating-factor argument for nonmonotone schedules.
Throughout Section~3, the standing assumptions are
Assumptions~\ref{assu:baseline},~\ref{assu:relative-grid}, and~\ref{assu:cumulative}.

\paragraph{Constant convention.}
In Section~3 and its appendix proofs, unsubscripted \(C,c>0\) may depend only
on \(C_{\mathrm{mom}},m_\bbeta,B_\bbeta,L_\bbeta\). Unless stated otherwise, they are
independent of \(d,N,T,t_{\min}\), the grid, the learned score, and the data
law beyond \(M_2\leq C_{\mathrm{mom}}d\). The constants implicit in
\(\lesssim,\gtrsim,\asymp\) follow the same convention.

\subsection{Local discretization error analysis}

By time reversal, the local error from freezing \(h_{T-s}(Y_s)\) at reverse
time \(s\) can be evaluated under the joint law of two forward marginals:
\begin{equation}\label{eq:local-E-definition}
E_{s,t}
:=
\E_{X_{T-t},X_{T-s}}
\bigl\|h_{T-t}(X_{T-t})-h_{T-s}(X_{T-s})\bigr\|_{\bbeta(T-t)}^2,
\qquad 0\leq s<t<T.
\end{equation}
The local estimate replaces this two-time quantity by the one-time weighted score energy
\begin{equation}\label{eq:K-definition}
K(t):=\E_{X_{T-t}}\|h_{T-t}(X_{T-t})\|_{\bbeta(T-t)}^2.
\end{equation}

The schedule dependence of the local comparison is encoded by
\begin{align}
C_{s,t}
&:=
\bbeta(T-s)+\bbeta(T-t)
-\Phi(T-s,T-t)\bbeta(T-t)
-\bbeta(T-t)\Phi(T-s,T-t)^T,\notag\\
M_{s,t}
&:=
\bbeta(T-s)^{-1/2}C_{s,t}
\bbeta(T-s)^{-1/2}.
\label{eq:M-definition}
\end{align}
Both matrices are explicit once the schedule and its transition matrix are fixed.

The following conditional-score identity is the key input.
\begin{lemm}[Conditional score identity]\label{lem:conditional-score}
For the transition matrix \(\Phi\) defined in \eqref{eq:Phi} and \(0\leq s<t<T\),
    $$\E_{X_{T-t}\mid X_{T-s}}[h_{T-t}(X_{T-t})]=\Phi(T-s,T-t)^Th_{T-s}(X_{T-s}).$$
\end{lemm}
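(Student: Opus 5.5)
The plan is to rewrite the identity in forward time and prove it by integration by parts against the Gaussian transition kernel from time \(u\) to time \(v\). Set \(u:=T-t\) and \(v:=T-s\), so that \(0<u<v\leq T\); the claim becomes
\[
\E[h_u(X_u)\mid X_v]=\Phi(v,u)^T h_v(X_v).
\]
I use only the linear structure of \eqref{eq:intro-forward-SDE}. By variation of constants and \eqref{eq:Phi},
\[
X_v=\Phi(v,u)X_u+\xi,\qquad \xi\sim N(0,\Sigma_{v,u}),\qquad \Sigma_{v,u}:=\int_u^v\Phi(v,r)\bbeta(r)\Phi(v,r)^T\,dr,
\]
with \(\xi\) independent of \(X_u\). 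Since \(\bbeta(r)\succeq m_\bbeta I\) and \(\Phi(v,r)\) is invertible (it solves a linear ODE), \(\Sigma_{v,u}\succ0\) whenever \(v>u\). So the transition density
\[
p(y\mid x)=N\bigl(y;\Phi(v,u)x,\Sigma_{v,u}\bigr)
\]
is well defined and smooth.

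The key algebraic observation is that \(p\) depends on \((x,y)\) only through \(y-\Phi(v,u)x\). Hence
\[
\nabla_y p(y\mid x)=-\Sigma_{v,u}^{-1}\{y-\Phi(v,u)x\}\,p(y\mid x),
\qquad
\nabla_x p(y\mid x)=-\Phi(v,u)^T\nabla_y p(y\mid x).
\]
Write \(q_v(y)=\int q_u(x)p(y\mid x)\,dx\) and differentiate under the integral; this is legitimate because \(p\) is a Gaussian kernel in \(y\) with locally uniformly integrable derivatives. Multiplying by \(\Phi(v,u)^T\) gives
\[
\Phi(v,u)^T\nabla q_v(y)=\int q_u(x)\,\Phi(v,u)^T\nabla_y p(y\mid x)\,dx=-\int q_u(x)\nabla_x p(y\mid x)\,dx .
\]
Integrating by parts in \(x\) turns the right-hand side into \(\int \nabla q_u(x)\,p(y\mid x)\,dx\).

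Dividing by \(q_v(y)>0\) and using Bayes' rule, where the conditional density of \(X_u\) given \(X_v=y\) is \(q_u(x)p(y\mid x)/q_v(y)\), yields
\[
\Phi(v,u)^T h_v(y)=\int h_u(x)\,\frac{q_u(x)p(y\mid x)}{q_v(y)}\,dx=\E[h_u(X_u)\mid X_v=y],
\]
which is the claim.

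The only delicate step is the integration by parts in \(x\): the boundary terms must vanish and \(h_u(X_u)\) must be integrable. Because \(u>0\), \(q_u\) is the law of \(A_uX_0+\Gamma_u^{1/2}Z\) with \(\Gamma_u\succ0\). By \eqref{eq:ld-tweedie} and \(M_2<\infty\), \(h_u\) is smooth and \(\E\|h_u(X_u)\|^2<\infty\). Consequently \(\nabla q_u\in L^1\), and \(q_u\,p(y\mid\cdot)\) decays because \(p(y\mid\cdot)\) has Gaussian decay in \(x\). If needed, I would carry out the integration by parts against a smooth cutoff \(\chi_R(x)\) and let \(R\to\infty\) by dominated convergence; the cutoff error is controlled by \(\int_{\|x\|\geq R}q_u(x)\|\nabla_x p(y\mid x)\|\,dx\to0\). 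An alternative route, which I would keep in reserve, is to use \eqref{eq:ld-tweedie} at both times together with \eqref{eq:ld-markov-identity} and linear Gaussian algebra for \(\E[X_u\mid X_v]\). The kernel argument above avoids that bookkeeping.
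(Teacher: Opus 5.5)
Your proposal is correct and is essentially the paper's proof: both use the Gaussian-kernel relation $\nabla_x p=-\Phi(v,u)^T\nabla_y p$, integrate by parts in $x$ to move the gradient onto $q_u$, differentiate the Chapman--Kolmogorov integral $q_v(y)=\int q_u(x)p(y\mid x)\,dx$ under the integral sign, and divide by $q_v(y)$ using Bayes' rule. Your explicit transition covariance $\Sigma_{v,u}\succ0$ and cutoff argument simply make explicit the integrability remarks that the paper states in one line.
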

\begin{proof}
Let $p(x,y)$ be the transition density from $T-t$ to $T-s$ and set
$\Phi_{s,t}:=\Phi(T-s,T-t)$. Its Gaussian form gives
\(\nabla_xp(x,y)=-\Phi_{s,t}^T\nabla_yp(x,y)\). Since
\(h_{T-t}(x)q_{T-t}(x)=\nabla_xq_{T-t}(x)\), integration by parts in \(x\)
gives, with all integrals over \(\R^d\),
\[
\begin{aligned}
\int h_{T-t}p\,q_{T-t}\,dx
&=\int p\,\nabla_xq_{T-t}\,dx
=-\int q_{T-t}\nabla_xp\,dx\\
&=\Phi_{s,t}^T\int q_{T-t}\nabla_yp\,dx
=\Phi_{s,t}^T\nabla_y\int p\,q_{T-t}\,dx
=\Phi_{s,t}^T\nabla_yq_{T-s}(y).
\end{aligned}
\]
The last equality is the forward marginalization identity. Moreover,
\[
\mathbb P(X_{T-t}\in dx\mid X_{T-s}=y)
=\frac{p(x,y)q_{T-t}(x)}{q_{T-s}(y)}\,dx.
\]
Dividing the displayed equality by \(q_{T-s}(y)\) therefore proves the
conditional-score identity. The integration by parts and differentiation under
the integral can be justified with compactly supported cutoffs; Gaussian
smoothing at \(T-t>0\) and the nondegenerate Gaussian transition provide the
required integrability when the cutoffs are removed.
\end{proof}

\begin{lemm}[Local sandwich bound]\label{lem:local-sandwich}
For \(0\leq s<t<T\),
\[
K(t)-\{1-\lambda_{\min}(M_{s,t})\}K(s)
\leq E_{s,t}\leq
K(t)-\{1-\lambda_{\max}(M_{s,t})\}K(s).
\]
\end{lemm}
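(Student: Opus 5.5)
The plan is to expand the square in the definition \eqref{eq:local-E-definition} of \(E_{s,t}\) and evaluate the cross term with Lemma~\ref{lem:conditional-score}. This reduces the two-time quantity to one-time quadratic forms in \(h_{T-s}(X_{T-s})\), and the eigenvalue bounds then follow from the definition of \(M_{s,t}\).

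Write \(\beta_s:=\bbeta(T-s)\), \(\beta_t:=\bbeta(T-t)\), \(\Phi_{s,t}:=\Phi(T-s,T-t)\), \(h_t:=h_{T-t}(X_{T-t})\) and \(h_s:=h_{T-s}(X_{T-s})\). Expanding the \(\beta_t\)-weighted norm gives
\[
E_{s,t}=\E\|h_t\|_{\beta_t}^2+\E\|h_s\|_{\beta_t}^2-2\,\E\langle \beta_t h_t,h_s\rangle .
\]
The first term is exactly \(K(t)\). For the cross term, I would condition on \(X_{T-s}\). Since \(h_s\) is \(X_{T-s}\)-measurable, the tower property and Lemma~\ref{lem:conditional-score} give
\[
\E\langle \beta_t h_t,h_s\rangle=\E\langle \beta_t\Phi_{s,t}^Th_s,h_s\rangle .
\]
Symmetrizing the quadratic form then gives
\[
\E\|h_s\|_{\beta_t}^2-2\E\langle\beta_t\Phi_{s,t}^Th_s,h_s\rangle
=\E\bigl[h_s^T(\beta_t-\Phi_{s,t}\beta_t-\beta_t\Phi_{s,t}^T)h_s\bigr].
\]

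Next I add and subtract \(\E\|h_s\|_{\beta_s}^2=K(s)\). This turns the matrix into \(C_{s,t}-\beta_s\), so that
\[
E_{s,t}=K(t)-K(s)+\E\bigl[h_s^TC_{s,t}h_s\bigr].
\]
Setting \(v:=\beta_s^{1/2}h_s\) gives \(h_s^TC_{s,t}h_s=v^TM_{s,t}v\). Because \(C_{s,t}\) is symmetric, so is \(M_{s,t}\), and the Rayleigh-quotient bounds \(\lambda_{\min}(M_{s,t})\|v\|^2\le v^TM_{s,t}v\le\lambda_{\max}(M_{s,t})\|v\|^2\) hold pointwise. Taking expectations and using \(\E\|v\|^2=K(s)\) yields both sides of the sandwich at once.

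The main technical point, rather than a real obstacle, is justifying the expansion and the tower-property step. This requires \(K(s)\) and \(K(t)\) to be finite and the conditional identity to hold in \(L^1\). At positive forward times the scores of the Gaussian-smoothed law have finite second moments under \(M_2<\infty\); for instance, Tweedie's identity \eqref{eq:ld-tweedie} bounds \(h_u\) by \(\Gamma_u^{-1}\) times quantities with finite second moments. Since \(T-t>0\) and \(T-s>0\), all expectations are finite and Cauchy--Schwarz controls the cross term. No other step involves more than linear algebra.
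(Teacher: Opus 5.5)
Your proposal is correct and matches the paper's proof step for step. You expand the square, evaluate the cross term by conditioning and Lemma~\ref{lem:conditional-score}, and rewrite the resulting matrix as \(C_{s,t}-\bbeta(T-s)\) to obtain \(E_{s,t}=K(t)-K(s)+\E[h_s^TC_{s,t}h_s]\). You then apply the Rayleigh quotient for the symmetric \(M_{s,t}\) after substituting \(v=\bbeta(T-s)^{1/2}h_s\); your integrability remark is a harmless addition that the paper leaves implicit.
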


\begin{proof}
Write $h_s=h_{T-s}(X_{T-s})$ and $h_t=h_{T-t}(X_{T-t})$. For brevity, set
\[
B_s:=\bbeta(T-s),\qquad
B_t:=\bbeta(T-t),\qquad
\Phi_{s,t}:=\Phi(T-s,T-t).
\]
Expanding the square and applying Lemma~\ref{lem:conditional-score} to the cross term gives
\begin{equation}\label{eq:local-error-identity}
\begin{split}
E_{s,t}
&=K(t)+\E_{X_{T-s}}\left[h_s^TB_th_s\right]
-2\E_{X_{T-s}}\left[h_s^T\Phi_{s,t}B_th_s\right]\\
&=K(t)-K(s)+\E_{X_{T-s}}\left[h_s^TC_{s,t}h_s\right].
\end{split}
\end{equation}
Since $B_t$ is symmetric, for every $x\in\R^d$,
\[
2x^T\Phi_{s,t}B_tx
=x^T\left(\Phi_{s,t}B_t+B_t\Phi_{s,t}^T\right)x.
\]
The definition of $C_{s,t}$ therefore gives
\[
x^TB_tx-2x^T\Phi_{s,t}B_tx
=-x^TB_sx+x^TC_{s,t}x,
\]
which proves \eqref{eq:local-error-identity}. Letting $y=B_s^{1/2}x$, we have
\[
x^TC_{s,t}x=y^TM_{s,t}y,
\qquad
x^TB_sx=\|y\|^2.
\]
The Rayleigh quotient gives
$$
\lambda_{\min}(M_{s,t})\,x^T\bbeta(T-s)x
\leq x^TC_{s,t}x
\leq \lambda_{\max}(M_{s,t})\,x^T\bbeta(T-s)x.
$$
Taking $x=h_s$ and expectations in \eqref{eq:local-error-identity} proves both bounds.
\end{proof}

The local estimate separates the change in weighted score energy from a matrix
correction determined by the forward transition and the schedule geometry. We
next accumulate this local error along the forward diffusion.

\subsection{Global discretization error analysis via the forward SDE}

We first connect $K$ directly to the forward information quantities. Since
\(\nabla\log\gamma(x)=-x\), we have
$$\nabla\log r_u(x)=\nabla\log q_u(x)-\nabla\log\gamma(x)=h_u(x)+x.$$

Therefore
\begin{equation}
\begin{split}
I_\bbeta(T-t)
&=\E_{X_{T-t}}\left[(h_{T-t}(X_{T-t})+X_{T-t})^T\bbeta(T-t)(h_{T-t}(X_{T-t})+X_{T-t})\right] \\
&=\E_{X_{T-t}}\left[h_{T-t}(X_{T-t})^T\bbeta(T-t)h_{T-t}(X_{T-t})\right]+2\E_{X_{T-t}}\left[X_{T-t}^T\bbeta(T-t)h_{T-t}(X_{T-t})\right]\\
&\quad +\E_{X_{T-t}}\left[X_{T-t}^T\bbeta(T-t)X_{T-t}\right].
\end{split}
\end{equation}

For every forward time $u>0$, Gaussian smoothing makes $q_u$ smooth. A coordinatewise integration by parts, justified by a cutoff argument and $M_2<\infty$, gives
\begin{align*}
\E_{X_u}\left[X_u^T\bbeta(u)h_u(X_u)\right]
&=\sum_{i,j=1}^d\bbeta_{ij}(u)
\int_{\R^d}x_i\,\partial_jq_u(x)\,dx\\
&=-\sum_{i,j=1}^d\bbeta_{ij}(u)\delta_{ij}
=-\Tr(\bbeta(u)).
\end{align*}

Since
\(\E[X_u^T\bbeta(u)X_u]=\Tr\{\bbeta(u)\Sigma_u\}\), evaluating the
preceding identity at \(u=T-t\) gives, for \(0\leq t<T\),
\begin{equation}\label{eq:K-information}
K(t)=I_\bbeta(T-t)+2\Tr(\bbeta(T-t))-\Tr(\bbeta(T-t)\Sigma_{T-t}).
\end{equation}
This identity links the local reverse-time error to the forward KL dissipation in Proposition~\ref{prop:kl-dissipation}.
For later use, define the forward-time remainder
\begin{equation}\label{eq:G-definition}
G(u):=2\Tr\bbeta(u)-\Tr(\bbeta(u)\Sigma_u).
\end{equation}

The cumulative proof uses the following bounds, all expressed under the
unconditional forward law.
\begin{lemm}[Forward information and schedule bounds]
\label{lem:cumulative-technical-bounds}
Under Assumption~\ref{assu:baseline}, with $M_2\leq C_{\mathrm{mom}}d$, for every
$u\in[t_{\min},T]$,
\[
I_\bbeta(u)\leq Cd\left(1+\frac1u\right),
\qquad
K(T-u)\leq Cd\left(1+\frac1u\right),
\]
and
\[
|G(u)-G(v)|\leq Cd|u-v|,
\qquad u,v\in[t_{\min},T],
\]
and, for $0\leq s<t\leq T-t_{\min}$,
\[
\|M_{s,t}\|_2
\leq\frac{L_\bbeta+B_\bbeta^2}{m_\bbeta}(t-s),
\qquad
\lambda_{\max}(M_{s,t})\leq\|M_{s,t}\|_2.
\]
Here \(C\) depends only on \(C_{\mathrm{mom}},m_\bbeta,B_\bbeta,L_\bbeta\). In
particular, it is independent of \(d,N,T,t_{\min}\) and of \(q_0\) beyond
\(M_2\leq C_{\mathrm{mom}}d\).
\end{lemm}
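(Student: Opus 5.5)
We prove the four bounds separately, each from the Gaussian channel \eqref{eq:forward-channel} and the schedule bounds of Assumption~\ref{assu:baseline}.

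\textbf{Bounds on \(K\) and \(I_\bbeta\).} We begin with \(K(T-u)=\E\|h_u(X_u)\|_{\bbeta(u)}^2\). The matrix Tweedie identity \eqref{eq:ld-tweedie} gives
\[
h_u(X_u)=-\Gamma_u^{-1}\E[\Gamma_u^{1/2}Z\mid X_u]=-\Gamma_u^{-1/2}\E[Z\mid X_u].
\]
Jensen's inequality then yields
\[
K(T-u)\le B_\bbeta\,\|\Gamma_u^{-1}\|_2\,\E\|Z\|^2=B_\bbeta d\,\|\Gamma_u^{-1}\|_2 .
\]
To bound \(\lambda_{\min}(\Gamma_u)\), differentiate \(A_uA_u^T\); this gives \(\|A_u\|_2^2\le e^{-m_\bbeta u}\), so \(\lambda_{\min}(\Gamma_u)\ge 1-e^{-m_\bbeta u}\ge c\min\{1,u\}\). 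Hence \(K(T-u)\le Cd(1+1/u)\). For \(I_\bbeta\), use \(\nabla\log r_u=h_u+x\) together with \((a+b)^2\le 2a^2+2b^2\):
\[
I_\bbeta(u)\le 2K(T-u)+2B_\bbeta\Tr\Sigma_u .
\]
It then suffices to bound \(\Tr\Sigma_u\). Since \(\Sigma_u=A_u\E[X_0X_0^T]A_u^T+\Gamma_u\), we get \(\Tr\Sigma_u\le M_2+d\le Cd\).

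\textbf{Lipschitz bound on \(G\).} On each smooth interval, differentiate \(G(u)=2\Tr\bbeta(u)-\Tr(\bbeta(u)\Sigma_u)\). The forward SDE gives the Lyapunov equation
\[
\Sigma_u'=-\tfrac12(\bbeta\Sigma_u+\Sigma_u\bbeta)+\bbeta .
\]
Therefore
\[
|G'(u)|\le 2d\|\bbeta'\|_2+\|\bbeta'\|_2\Tr\Sigma_u+\|\bbeta\|_2\Tr|\Sigma_u'| .
\]
Using \(\Tr\Sigma_u\le Cd\) and \(\Sigma_u\succeq0\), this is bounded by \(Cd(L_\bbeta+B_\bbeta^2+B_\bbeta)\). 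The trace norm of the symmetrized product is at most \(2B_\bbeta\Tr\Sigma_u\). Now \(G\) is continuous, since \(\bbeta\) and \(\Sigma\) are continuous, and piecewise \(C^1\). Integrating \(G'\) across the finitely many pieces therefore gives \(|G(u)-G(v)|\le Cd|u-v|\).

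\textbf{Bound on \(M_{s,t}\).} Set \(a=T-t<b=T-s\) and write \(\Phi=\Phi(b,a)\). We have
\[
C_{s,t}=\bbeta(b)-\bbeta(a)+(I-\Phi)\bbeta(a)+\bbeta(a)(I-\Phi)^T .
\]
The first difference is bounded by \(L_\bbeta(b-a)\); across kinks we use continuity and sum over pieces. For the remaining terms, \eqref{eq:Phi} gives
\[
I-\Phi=\tfrac12\int_a^b\bbeta(r)\Phi(r,a)\,dr,
\]
with \(\|\Phi(r,a)\|_2\le1\). Hence \(\|I-\Phi\|_2\le \tfrac12B_\bbeta(b-a)\), and these two terms together contribute at most \(B_\bbeta^2(b-a)\). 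Conjugating by \(\bbeta(b)^{-1/2}\) costs a factor \(1/m_\bbeta\), which yields the stated constant \((L_\bbeta+B_\bbeta^2)/m_\bbeta\). Finally, \(\lambda_{\max}(M)\le\|M\|_2\) holds because \(M_{s,t}\) is symmetric.

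The only mildly delicate steps are the integrability justifications and the piecewise-\(C^1\) gluing; none of them is a genuine obstacle.
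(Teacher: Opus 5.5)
Your proof is correct and follows the paper's argument essentially step for step. The shared steps are: the Gaussian-smoothing/Tweedie representation of $h_u$ with Jensen and $\lambda_{\min}(\Gamma_u)\geq 1-e^{-m_\bbeta u}$; the bound $\Tr\Sigma_u\leq M_2+d$; the Lyapunov equation with piecewise integration for $G$; and the same decomposition of $C_{s,t}$ with $\|I-\Phi\|_2\leq\frac12B_\bbeta(t-s)$, followed by conjugation by $\bbeta(T-s)^{-1/2}$. One phrase is imprecise: ``differentiate $A_uA_u^T$''. Because $\bbeta(u)$ and $A_uA_u^T$ need not commute, the clean Gronwall step is on $\frac{d}{du}\|A_uw\|^2=-(A_uw)^T\bbeta(u)A_uw$, i.e.\ on $A_u^TA_u$, as the paper does; this gives the same bound $\|A_u\|_2^2\leq e^{-m_\bbeta u}$.
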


These estimates follow directly from the forward equation and require no
localization argument; see Appendix~\ref{app:cumulative-auxiliary}. Moreover,
schedule monotonicity and
Proposition~\ref{prop:fisher-evolution} give, on each smooth interval,
\[
I_\bbeta'(u)
=I_{\bbeta'}(u)-I_{\bbeta^2}(u)-J_\bbeta(u)\leq0.
\]
Hence \(I_\bbeta\) is nonincreasing on \([t_{\min},T]\). Together with the
relative mesh condition in Assumption~\ref{assu:relative-grid}, these forward
bounds allow the local estimate to be accumulated over the grid. For a
nonmonotone schedule, an integrating-factor modification of \(I_\bbeta\)
remains nonincreasing, and the same argument incurs only the additional factor
\(e^{A_\bbeta}\); see Remark~\ref{rema:nonmonotone-schedules}.

\begin{theo}[Cumulative discretization bound]\label{theo:cumulative-E-relative-grid}
Under the standing assumptions, recall that \(\kappa\) is the relative-mesh
constant in Assumption~\ref{assu:relative-grid}. Then
$$
\sum_{i=0}^{N-1}\int_{t_i}^{t_{i+1}}E_{t_i,t}\,dt
\leq
Cd\kappa
\left(
1+T+\log\frac1{t_{\min}}
\right).
$$
\end{theo}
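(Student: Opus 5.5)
Starting from the upper half of Lemma~\ref{lem:local-sandwich} with \(s=t_i\), the plan is to bound each summand by
\[
E_{t_i,t}\leq \{K(t)-K(t_i)\}+\lambda_{\max}(M_{t_i,t})K(t_i),
\]
and then treat the two pieces separately.

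\textbf{Matrix-correction piece.} Lemma~\ref{lem:cumulative-technical-bounds} gives \(\lambda_{\max}(M_{t_i,t})\leq C(t-t_i)\) and \(K(t_i)\leq Cd(1+1/u_i)\). Integrating over \([t_i,t_{i+1}]\) yields at most \(Cd\,\eta_i^2(1+1/u_i)\). The relative mesh gives \(\eta_i\leq\kappa\min\{1,u_{i+1}\}\) and \(u_{i+1}\leq u_i\), so this is at most \(Cd\kappa\,\eta_i(1+1/u_{i+1})\). Summing, \(\sum_i\eta_i(1+1/u_{i+1})\) is a Riemann-type sum for \(\int_{t_{\min}}^T(1+1/u)\,du=T-t_{\min}+\log(T/t_{\min})\). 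Since \(\eta_i\leq u_{i+1}\), we have \(u_i\leq 2u_{i+1}\), so the sum is bounded by \(C(1+T+\log(1/t_{\min}))\), using \(\log T\leq T\).

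\textbf{Energy-difference piece.} Use \eqref{eq:K-information}, \(K(t)=I_\bbeta(T-t)+G(T-t)\).
\begin{itemize}
\item The \(G\) part contributes \(|G(T-t)-G(u_i)|\leq Cd(t-t_i)\). This integrates to \(Cd\eta_i^2\leq Cd\kappa\,\eta_i\), whose sum is \(Cd\kappa T\).
\item The \(I_\bbeta\) part is the main obstacle. For \(t\in[t_i,t_{i+1}]\) the forward time \(u=T-t\leq u_i\), so monotonicity gives \(I_\bbeta(u)-I_\bbeta(u_i)\geq0\), the wrong sign. I will write
\[
\int_{t_i}^{t_{i+1}}\{I_\bbeta(T-t)-I_\bbeta(u_i)\}dt\leq \eta_i\{I_\bbeta(u_{i+1})-I_\bbeta(u_i)\}
\]
and sum by parts, i.e.\ Stieltjes integration against the positive measure \(-dI_\bbeta\). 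This gives \(\sum_i\eta_i\{I_\bbeta(u_{i+1})-I_\bbeta(u_i)\}\).
\end{itemize}

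\textbf{Bounding the \(I_\bbeta\) sum.} Using \(\eta_i\leq\kappa\min\{1,u_{i+1}\}\), it suffices to bound \(\kappa\int\min\{1,u\}\,(-dI_\bbeta)(u)\) over \([t_{\min},T]\). One small point is that the weight \(\min\{1,u_{i+1}\}\) evaluated at the left endpoint is dominated by \(\min\{1,u\}\) on \([u_{i+1},u_i]\), since \(u\geq u_{i+1}\) there. Integrating by parts,
\[
\int_{t_{\min}}^T\min\{1,u\}\,(-dI_\bbeta)\leq t_{\min}I_\bbeta(t_{\min})+\int_{t_{\min}}^{1}I_\bbeta(u)\,du .
\]
The boundary term at \(T\) is \(-\min\{1,T\}I_\bbeta(T)\leq0\) and is dropped. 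By Lemma~\ref{lem:cumulative-technical-bounds}, \(I_\bbeta(u)\leq Cd(1+1/u)\), so \(t_{\min}I_\bbeta(t_{\min})\leq Cd\) and \(\int_{t_{\min}}^1 I_\bbeta\leq Cd(1+\log(1/t_{\min}))\). Alternatively, Proposition~\ref{prop:kl-dissipation} bounds \(\int I_\bbeta\) by \(2\KL\). On the region \(u\geq1\) the weight is \(1\), and the total variation telescopes to \(I_\bbeta(1)\leq Cd\).

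\textbf{Conclusion.} Collecting the three contributions gives \(Cd\kappa(1+T+\log(1/t_{\min}))\). If \(\bbeta\) has finitely many nonsmooth points, monotonicity must be checked to persist across them. I expect this to hold since \(\bbeta'\preceq0\) is interpreted piecewise and \(I_\bbeta\) is continuous in \(u\). If it fails, the extra jumps would add only \(O(d\kappa)\) per breakpoint via the same Stieltjes bound.
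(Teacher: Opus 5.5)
Your proposal is correct and follows essentially the paper's proof: you use the upper sandwich bound, split $K(T-u)=I_\bbeta(u)+G(u)$, integrate by parts against the positive measure $-dI_\bbeta$ with weight $\min\{1,v\}$, use the Lipschitz bound for $G$, and use $\lambda_{\max}(M_{t_i,t})\leq C(t-t_i)$ together with $K(t_i)\leq Cd(1+1/u_i)$. The differences are minor: your bound $\eta_i^2(1+1/u_i)\leq\kappa\eta_i(1+1/u_{i+1})$ on the matrix term is cruder than the paper's $\eta_i^2/u_i\leq\kappa\eta_i$ and gives an extra $\log(T/t_{\min})$, which the final bound absorbs, and your breakpoint concern does not arise because $I_\bbeta$ is continuous in $u$ and nonincreasing on each smooth piece, hence nonincreasing on all of $[t_{\min},T]$.
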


\begin{proof}
By Lemma~\ref{lem:local-sandwich},
$$
E_{t_i,t}\leq K(t)-K(t_i)+\lambda_{\max}(M_{t_i,t})K(t_i).
$$
Using the forward-time variable \(u=T-t\), equation~\eqref{eq:K-information} gives
$$
K(T-u)=I_\bbeta(u)+G(u).
$$
Since \(I_\bbeta\) is nonincreasing, it induces the positive Stieltjes measure
\(\mu_I(du):=-dI_\bbeta(u)\). The Fisher-information part satisfies
$$
\begin{aligned}
\sum_i\int_{u_{i+1}}^{u_i}\left(I_\bbeta(u)-I_\bbeta(u_i)\right)du
&=
\sum_i\int_{(u_{i+1},u_i]}(v-u_{i+1})\,\mu_I(dv)\\
&\leq
\kappa\int_{(t_{\min},T]}\min\{1,v\}\,\mu_I(dv).
\end{aligned}
$$
The equality follows from the Stieltjes representation
$I_\bbeta(u)-I_\bbeta(u_i)=\mu_I((u,u_i])$
and Fubini's theorem. The final inequality uses
$v-u_{i+1}\leq \eta_i\leq\kappa\min\{1,u_{i+1}\}\leq\kappa\min\{1,v\}$.
To make the endpoint contribution explicit, set $w(v):=\min\{1,v\}$ and
$J:=\int_{(t_{\min},T]}w(v)\,\mu_I(dv)$. Since $t_{\min}\leq1\leq T$,
we have $w(t_{\min})=t_{\min}$, $w(T)=1$, and
$w'(v)=\mathbf 1_{(t_{\min},1)}(v)$ almost everywhere. Integration by parts gives
\begin{align*}
J
&=t_{\min}I_\bbeta(t_{\min})-I_\bbeta(T)
+\int_{t_{\min}}^1 I_\bbeta(v)\,dv\\
&\leq Cd+Cd\int_{t_{\min}}^1\left(1+\frac1v\right)dv
\leq Cd\left(1+\log\frac1{t_{\min}}\right).
\end{align*}
Indeed, since $t_{\min}\leq1$, Lemma~\ref{lem:cumulative-technical-bounds} gives
$t_{\min}I_\bbeta(t_{\min})\leq
Cd(1+t_{\min})\leq2Cd$, while
$I_\bbeta(T)\geq0$. By the constant convention above, this \(C\) is independent
of \(t_{\min}\) and of the data law beyond \(M_2\leq C_{\mathrm{mom}}d\).
Hence the Fisher-information contribution is bounded by $Cd\kappa(1+\log(1/t_{\min}))$.

For the remaining part of \(K\), the global Lipschitz estimate in Lemma~\ref{lem:cumulative-technical-bounds} gives
\[
\left|\int_{u_{i+1}}^{u_i}\left(G(u)-G(u_i)\right)du\right|
\leq Cd\int_{u_{i+1}}^{u_i}(u_i-u)\,du
=\frac{Cd}{2}\eta_i^2.
\]
Since \(\eta_i\leq\kappa\), we have \(\sum_i \eta_i^2\leq \kappa\sum_i \eta_i\leq\kappa T\). Thus the total \(G\)-remainder is bounded by \(Cd\kappa T\).

It remains to control the \(M\)-term. By Lemma~\ref{lem:cumulative-technical-bounds},
\[
\lambda_{\max}(M_{t_i,t})
\leq
\|M_{t_i,t}\|_2
\leq
\frac{L_\bbeta+B_\bbeta^2}{m_\bbeta}(t-t_i),
\qquad
K(t_i)\leq Cd\left(1+\frac1{u_i}\right).
\]
Since
\[
\int_{t_i}^{t_{i+1}}(t-t_i)\,dt
=\frac12\eta_i^2,
\]
we obtain
\[
\sum_iK(t_i)\int_{t_i}^{t_{i+1}}\lambda_{\max}(M_{t_i,t})dt
\leq
\frac{L_\bbeta+B_\bbeta^2}{2m_\bbeta}
\sum_iK(t_i)\eta_i^2
\leq
Cd\sum_i\left(1+\frac1{u_i}\right)\eta_i^2.
\]
The first sum is at most \(Cd\kappa T\). For the second, if \(u_{i+1}\geq1\), then \(u_i\geq1\) and \(\eta_i^2/u_i\leq \eta_i^2\leq\kappa \eta_i\). If \(u_{i+1}<1\), then \(\eta_i\leq\kappa u_{i+1}\leq\kappa u_i\), so again \(\eta_i^2/u_i\leq\kappa \eta_i\). Therefore
$$
\sum_i\frac{\eta_i^2}{u_i}\leq \kappa\sum_i \eta_i\leq \kappa T.
$$
Combining the three contributions proves the cumulative estimate.
\end{proof}

The mesh condition permits intervals of order \(\kappa\) away from the
early-stopping endpoint and requires intervals of order \(\kappa u\) near it.
The factor \(\log(1/t_{\min})\) records the cost of approaching the unsmoothed
data law.

\subsection{Overall KL guarantee and step complexity}

Theorem~\ref{theo:cumulative-E-relative-grid} controls discretization. The
remaining initialization error is caused by replacing the terminal law
\(q_T\) with \(\pi\), and is governed by the forward mixing rate.

\begin{lemm}[Terminal mixing bound]\label{lem:terminal-mixing}
Under Assumption~\ref{assu:baseline}, let
\(\Lambda_T:=\int_0^T\lambda_{\min}(\bbeta(u))\,du\). Then
\begin{equation}\label{eq:terminal-mixing-rate}
\tau_T
:=\frac{M_2}{2}e^{-\Lambda_T}
+\frac{d}{4}\frac{e^{-2\Lambda_T}}{1-e^{-\Lambda_T}},
\qquad
\KL(q_T\|\pi)\leq\tau_T.
\end{equation}
\end{lemm}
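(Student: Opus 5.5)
Since \(q_T\) is the law of \(A_TX_0+\Gamma_T^{1/2}Z\), I will use convexity of KL in its first argument, conditioning on \(X_0\):
\[
\KL(q_T\|\pi)\leq\E_{X_0}\,\KL\bigl(N(A_TX_0,\Gamma_T)\,\|\,N(0,I)\bigr).
\]
The Gaussian KL formula then gives, for fixed \(x\),
\[
\KL\bigl(N(A_Tx,\Gamma_T)\|N(0,I)\bigr)
=\tfrac12\bigl\{\|A_Tx\|^2+\Tr\Gamma_T-d-\log\det\Gamma_T\bigr\}.
\]
The mean term is handled by \(\|A_Tx\|^2\leq\|A_T\|_2^2\|x\|^2\leq e^{-\Lambda_T}\|x\|^2\). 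This uses the transition bound recorded after \eqref{eq:forward-channel}, which I will rederive via \(\frac{d}{du}\|\Phi(u,0)v\|^2=-v^T\Phi^T\bbeta\Phi v\leq-\lambda_{\min}(\bbeta(u))\|\Phi(u,0)v\|^2\) and Grönwall. Taking expectations, this term contributes \(\frac{M_2}{2}e^{-\Lambda_T}\).

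For the covariance part, diagonalize \(A_TA_T^T\) with eigenvalues \(a_j\in[0,e^{-\Lambda_T}]\). The eigenvalues of \(\Gamma_T\) are then \(1-a_j\), so the covariance contribution is
\[
\tfrac12\sum_{j=1}^d\bigl\{-a_j-\log(1-a_j)\bigr\}.
\]
I will use the elementary scalar bound \(-a-\log(1-a)=\sum_{n\geq2}a^n/n\leq\frac{a^2}{2(1-a)}\) for \(0\leq a<1\), which holds because \(\sum_{n\geq2}a^n/n\leq\frac12\sum_{n\geq2}a^n\). The function \(a\mapsto a^2/(1-a)\) is increasing on \([0,1)\), so each summand is at most \(\frac{e^{-2\Lambda_T}}{2(1-e^{-\Lambda_T})}\). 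Summing over \(j\) and multiplying by \(\frac12\) gives \(\frac d4\frac{e^{-2\Lambda_T}}{1-e^{-\Lambda_T}}\), which matches \(\tau_T\).

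The only delicate point is the first step, the convexity or chain-rule reduction. I will justify it through the joint-law chain rule: \(\KL(\mathrm{Law}(X_0,X_T)\|\mathrm{Law}(X_0)\otimes\pi)\) equals the averaged conditional KL and dominates \(\KL(q_T\|\pi)\) by the data-processing inequality. This requires \(\Lambda_T>0\), so that \(\Gamma_T\) is nondegenerate; that holds because \(\lambda_{\min}(\bbeta)\geq m_\bbeta>0\). Everything else is explicit Gaussian computation.
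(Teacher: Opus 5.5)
Your proposal is correct and follows the paper's proof essentially step for step. The shared steps are: convexity of relative entropy over the Gaussian channel \(X_T=A_TX_0+\Gamma_T^{1/2}Z\), the Gaussian KL formula, the bound \(-a-\log(1-a)\leq a^2/\{2(1-a)\}\) applied to the eigenvalues of \(A_TA_T^T\), and the Gr\"onwall estimate \(\|A_T\|_2^2\leq e^{-\Lambda_T}\) together with monotonicity of \(r\mapsto r^2/(1-r)\). The differences are cosmetic: the paper proves the scalar inequality via \(\int_0^a s/(1-s)\,ds\) rather than the power series, and it first states the bound in terms of \(\rho_T=\|A_T\|_2^2\) before substituting \(e^{-\Lambda_T}\).
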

Lemma~\ref{lem:terminal-mixing} quantifies the forward mixing toward \(\pi\);
its proof is given in Appendix~\ref{app:terminal-initialization}. For
\(\Lambda_T\geq\log 2\), it gives
\(\tau_T\leq M_2e^{-\Lambda_T}/2+de^{-2\Lambda_T}/2\). Since
\(\Lambda_T\geq m_\bbeta T\), a terminal horizon of order
\(m_\bbeta^{-1}\log((M_2+d)/\varepsilon^2)\) makes
\(\tau_T=O(\varepsilon^2)\). Thus the terminal horizon required to control
initialization grows only logarithmically in \(d/\varepsilon\).
Combining the KL decomposition in Theorem~\ref{theo:kl-decomposition}, the
global discretization estimate in Theorem~\ref{theo:cumulative-E-relative-grid},
and this forward-mixing bound gives the following final KL guarantee.

\begin{theo}[Relative-grid KL guarantee and step complexity]
\label{theo:relative-grid-kl-complexity}
Under the standing assumptions and the score-approximation budget
\eqref{eq:score-error-budget},
\begin{equation}\label{eq:relative-grid-score-error}
\KL(q_{t_{\min}}\|p_{T-t_{\min}})
\leq
\epsilon_{\mathrm{score}}^2+\tau_T
+Cd\kappa\left(1+T+\log\frac1{t_{\min}}\right).
\end{equation}
If the grid is also rate-regular in the sense of
\eqref{eq:relative-grid-rate}, then
\begin{equation}\label{eq:rate-regular-final-error}
\KL(q_{t_{\min}}\|p_{T-t_{\min}})
\leq
\epsilon_{\mathrm{score}}^2+\tau_T
+\frac{Cd}{N}
\left(T+\log\frac1{t_{\min}}\right)^2.
\end{equation}
\begin{samepage}
More precisely, let \(0<\varepsilon\leq1\), suppose
\(\epsilon_{\mathrm{score}}^2\leq\varepsilon^2/3\), and take
\(T=\max\{1,m_\bbeta^{-1}\log(4(M_2+d)/\varepsilon^2)\}\). For a
sufficiently large constant \(C\), the choice
\begin{equation}\label{eq:required-N-rate-regular}
\begin{split}
N
&=\left\lceil
\frac{Cd}{\varepsilon^2}
\left(T+\log\frac1{t_{\min}}\right)^2
\right\rceil \\
&=O\!\left(
\frac{d}{\varepsilon^2}
\left[1+\log\frac{d}{\varepsilon^2}
+\log\frac1{t_{\min}}\right]^2
\right)
\end{split}
\end{equation}
guarantees
\(\KL(q_{t_{\min}}\|p_{T-t_{\min}})\leq\varepsilon^2\). The constant in
the last line depends only on the standing schedule and moment bounds.
\end{samepage}
\end{theo}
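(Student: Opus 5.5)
The proof combines three earlier results; no new estimate is needed. I start from the score decomposition \eqref{eq:score-kl-decomposition} of Theorem~\ref{theo:kl-decomposition}. Its terminal term is bounded by Lemma~\ref{lem:terminal-mixing}, \(\KL(q_T\|\pi)\leq\tau_T\). The discretization term must be matched with Theorem~\ref{theo:cumulative-E-relative-grid}, which is stated in reverse time.

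\emph{Matching the discretization term.} In \eqref{eq:score-disc-definition}, substitute \(u=T-t\) on each interval \([u_{i+1},u_i]\). This gives \(u_i=T-t_i\), and the integrand becomes exactly \(E_{t_i,t}\) as defined in \eqref{eq:local-E-definition}, including the weight \(\bbeta(T-t)\). Hence
\[
\operatorname{Disc}_{\mathrm{score}}(\mathcal T_N)=\sum_i\int_{t_i}^{t_{i+1}}E_{t_i,t}\,dt .
\]
Theorem~\ref{theo:cumulative-E-relative-grid} then yields \eqref{eq:relative-grid-score-error}. For \eqref{eq:rate-regular-final-error}, insert \eqref{eq:relative-grid-rate} into \(\kappa\). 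Since \(T\geq1\), we have \(1+T+\log(1/t_{\min})\leq2(T+\log(1/t_{\min}))\), so the product collapses to \(C d N^{-1}(T+\log(1/t_{\min}))^2\).

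\emph{Step complexity.} I split the budget \(\varepsilon^2\) into three thirds. The score term is at most \(\varepsilon^2/3\) by hypothesis.

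For the terminal term, the choice of \(T\) gives \(\Lambda_T\geq m_\bbeta T\geq\log(4(M_2+d)/\varepsilon^2)\). This is at least \(\log 4\geq\log2\), so the simplified bound after Lemma~\ref{lem:terminal-mixing} applies:
\[
\tau_T\leq \tfrac12 M_2e^{-\Lambda_T}+\tfrac12 de^{-2\Lambda_T}\leq \tfrac12(M_2+d)e^{-\Lambda_T}\leq \varepsilon^2/8\leq\varepsilon^2/3 .
\]

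For the discretization term, choose \(N\) as in \eqref{eq:required-N-rate-regular} with its constant at least \(3\) times the constant in \eqref{eq:rate-regular-final-error}. The term is then at most \(\varepsilon^2/3\).

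To get the \(O(\cdot)\) form, use \(M_2\leq C_{\mathrm{mom}}d\). This gives \(T\lesssim 1+\log(d/\varepsilon^2)\), where constants are absorbed via \(\log(4(1+C_{\mathrm{mom}})d/\varepsilon^2)\lesssim 1+\log(d/\varepsilon^2)\), valid because \(d\geq1\) and \(\varepsilon\leq1\).

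\emph{Main obstacle: consistency of the rate-regular grid with Assumption~\ref{assu:relative-grid}.} Theorem~\ref{theo:cumulative-E-relative-grid} needs \(\kappa\leq1\), and the hybrid-grid construction needs \(\Delta_N\leq c_0\). I handle this by enlarging the constant in \(N\) if needed. Because \(d\geq1\) and \(\varepsilon\leq1\), the chosen \(N\) satisfies \(N\geq C(T+\log(1/t_{\min}))^2\geq C(T+\log(1/t_{\min}))\), the last step using \(T\geq1\). This forces \(\Delta_N\leq c_0\), so \eqref{eq:hybrid-relative-rate} provides a valid rate-regular grid with \(\kappa\leq1\), and the argument is consistent.
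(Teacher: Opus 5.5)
Your proposal is correct and follows the paper's proof. You combine the score KL decomposition (matched to the cumulative bound via $u=T-t$) with the terminal mixing lemma, insert the rate-regular bound on $\kappa$, and split $\varepsilon^2$ into thirds; your $\tau_T\le\varepsilon^2/8$ plays the role of the paper's $7\varepsilon^2/48$. Your extra check that the chosen $N$ gives $\Delta_N\le c_0$, so the hybrid grid has $\kappa\le 1$, is a harmless addition that the paper leaves implicit.
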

The proof is given in Appendix~\ref{app:cumulative-auxiliary}. The KL target is
the early-stopped law \(q_{t_{\min}}\). Under only a finite-second-moment
assumption, \(q_0\) may be singular, so no finite KL comparison between
\(q_0\) and \(q_{t_{\min}}\) is available in general. The early-stopping bias
can instead be measured in Wasserstein distance. The forward Gaussian coupling
gives \(W_2(q_0,q_{t_{\min}})\leq C\sqrt{dt_{\min}}\), so
\(t_{\min}\lesssim\delta^2/d\) suffices for a \(W_2\) bias of at most
\(\delta\). This statement is separate from the KL guarantee above, which
keeps \(t_{\min}\) explicit, as in the scalar result
\citep[Corollary~1]{benton2023nearly}.

\section{Intrinsic-dimensional analysis under posterior-mean freezing}
\label{sec:intrinsic-extension}

Section~2 defines the posterior-mean-frozen sampler, the low-dimensional
support assumptions, and the corresponding KL decomposition. We now follow the
same order as in Section~3: identify the local freezing error, accumulate it on
the forward time axis, and then return to the KL guarantee. Only the quantities
used in the first two steps change. Here the local error is a posterior-mean
increment, which permits intrinsic-dimensional control of
\(\operatorname{Disc}_{\mathrm{pm}}(\mathcal T_N)\).

The general discretization bound below is stated at an arbitrary resolution
and keeps the resolution error explicit. Its \(k\log k\) specialization uses
\(\varepsilon_0\) and states the required relation between this resolution and
\(t_{\min}\). The only learning input is the posterior-mean approximation budget
\(\operatorname{Err}_{\mathrm{pm}}\) in \eqref{eq:ld-estimation-error}.
Throughout this section, unsubscripted \(C,c>0\) may depend only on
\(m_\bbeta,B_\bbeta,L_\bbeta,C_0,C_{\mathrm{cover}}\), and not on
\(d,k,N,T,t_{\min}\), the grid, or other features of the data law. The final
complexity constant may also depend on \(C_R\) and the polynomial
ambient-dimension exponent.

\subsection{Local discretization error analysis}

In parallel with \eqref{eq:local-E-definition}, define the local error from
freezing the posterior mean at reverse time \(s\) by
\begin{equation}\label{eq:ld-local-E-definition}
E_{s,t}^{\mathrm{pm}}
:=
\E_{X_{T-t},X_{T-s}}
\|m_{T-t}(X_{T-t})-m_{T-s}(X_{T-s})\|_{W_{T-t}}^2,
\qquad 0\leq s<t<T.
\end{equation}
Then the discretization term in \eqref{eq:ld-disc} is
\begin{equation}\label{eq:ld-disc-reverse-time}
\operatorname{Disc}_{\mathrm{pm}}(\mathcal T_N)
=\frac12\sum_{i=0}^{N-1}
\int_{t_i}^{t_{i+1}}E_{t_i,t}^{\mathrm{pm}}\,dt.
\end{equation}
Define the posterior covariance and its forward average by
\[
P_u(x):=\Cov(X_0\mid X_u=x),
\qquad
\Pbar_u:=\E[P_u(X_u)],
\]
and define the marginal second moment of the posterior mean by
\[
\Rbar_u:=\E[m_u(X_u)m_u(X_u)^T].
\]
The conditional second-moment decomposition gives
\begin{equation}\label{eq:ld-second-moment-split}
\Sigma_0=\Pbar_u+\Rbar_u.
\end{equation}

\begin{samepage}
\begin{theo}[Local posterior-mean discretization error]
\label{theo:ld-local-error}
For \(0\leq s<t<T\), the local error in
\eqref{eq:ld-local-E-definition} has the exact forward representation
\begin{equation}\label{eq:ld-local-forward}
\begin{aligned}
0\leq E_{s,t}^{\mathrm{pm}}
&=\Tr\left[W_{T-t}\{\Pbar_{T-s}-\Pbar_{T-t}\}\right]\\
&=\Tr\left[W_{T-t}\{\Rbar_{T-t}-\Rbar_{T-s}\}\right].
\end{aligned}
\end{equation}
In particular, for \(t\in[t_i,t_{i+1}]\),
\begin{equation}\label{eq:ld-local-endpoint}
E_{t_i,t}^{\mathrm{pm}}
\leq
\Tr\left[
W_{T-t}\{\Pbar_{u_i}-\Pbar_{u_{i+1}}\}
\right].
\end{equation}
\end{theo}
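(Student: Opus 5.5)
Set \(u:=T-t\) and \(v:=T-s\), so that \(0<u<v\). Write \(m_u:=m_u(X_u)\) and \(m_v:=m_v(X_v)\) under the joint forward law. The Markov identity \eqref{eq:ld-markov-identity} gives \(\E[m_u\mid X_v]=m_v\). The difference \(m_u-m_v\) is therefore orthogonal in \(L^2\) to every square-integrable function of \(X_v\), and in particular to \(m_v\) itself. We will expand the weighted square \(\|m_u-m_v\|_{W_u}^2=\Tr[W_u(m_u-m_v)(m_u-m_v)^T]\) and take expectations. The cross terms then satisfy \(\E[m_u m_v^T]=\E[\E[m_u\mid X_v]m_v^T]=\Rbar_v\), and similarly \(\E[m_vm_u^T]=\Rbar_v\). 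This yields
\[
E^{\mathrm{pm}}_{s,t}=\Tr\bigl[W_u(\Rbar_u-2\Rbar_v+\Rbar_v)\bigr]=\Tr\bigl[W_u(\Rbar_u-\Rbar_v)\bigr].
\]
The second-moment split \eqref{eq:ld-second-moment-split} gives \(\Rbar_u-\Rbar_v=\Pbar_v-\Pbar_u\), which is the first equality in \eqref{eq:ld-local-forward}. Nonnegativity holds because \(E^{\mathrm{pm}}_{s,t}\) is an expectation of a squared \(W_u\)-norm with \(W_u\succeq0\) by \eqref{eq:ld-S-W}. Integrability is automatic, since \(M_2<\infty\) and conditional expectation is an \(L^2\) contraction, so \(m_u,m_v\in L^2\).

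For the endpoint bound \eqref{eq:ld-local-endpoint}, take \(s=t_i\) and \(t\in[t_i,t_{i+1}]\). Then \(v=u_i\) and \(u_{i+1}\le u\le u_i\). The first line of \eqref{eq:ld-local-forward} gives \(E^{\mathrm{pm}}_{t_i,t}=\Tr[W_u(\Pbar_{u_i}-\Pbar_u)]\). We then need \(\Pbar_u\succeq\Pbar_{u_{i+1}}\), that is, \(\Pbar\) is nondecreasing in forward time in the Loewner order, or equivalently \(\Rbar\) is nonincreasing. This follows from the same Markov identity applied to the pair \(u_{i+1}\le u\). The computation above, with \(W_u\) replaced by an arbitrary \(aa^T\), gives \(\Rbar_{u_{i+1}}-\Rbar_u=\E[(m_{u_{i+1}}-m_u)(m_{u_{i+1}}-m_u)^T]\succeq0\). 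Since \(W_u\succeq0\) and \(\Tr[W_uA]\ge0\) for \(A\succeq0\), we get \(\Tr[W_u(\Pbar_{u_i}-\Pbar_u)]\le\Tr[W_u(\Pbar_{u_i}-\Pbar_{u_{i+1}})]\). When \(t=t_i\) the local error is \(0\), so the bound holds there as well.

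The only delicate point is to make sure the Markov identity is applied in the correct direction. Here \(m_u\) is the posterior mean at the smaller forward time, and it is projected onto \(\sigma(X_v)\) with \(v>u\). This requires \(X_0\to X_u\to X_v\) to be a Markov chain, which holds for the forward SDE because \(X_v\) is obtained from \(X_u\) by independent Gaussian noise. Everything else is linear algebra with positive semidefinite traces.
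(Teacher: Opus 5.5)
Your proof is correct and follows essentially the same route as the paper: the Markov identity \(\E[m_u(X_u)\mid X_v]=m_v(X_v)\) eliminates the cross terms and gives the increment \(\Rbar_u-\Rbar_v=\Pbar_v-\Pbar_u\succeq0\), which is then traced against \(W_{T-t}\succeq0\). The endpoint bound comes from the same Loewner monotonicity of \(\Pbar\), applied to the pair \(u_{i+1}\le u\).
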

\end{samepage}

\begin{proof}
For \(0<u<v\), the forward Markov chain
\(X_0\to X_u\to X_v\) gives
\(\E[m_u(X_u)\mid X_v]=m_v(X_v)\). Expanding the square and applying
this identity to the cross terms yields
\begin{equation}\label{eq:ld-increment}
\begin{aligned}
&\E\left[
\{m_u(X_u)-m_v(X_v)\}
\{m_u(X_u)-m_v(X_v)\}^T
\right]\\
&\qquad=\Rbar_u-\Rbar_v
=\Pbar_v-\Pbar_u\succeq0.
\end{aligned}
\end{equation}
The second equality follows from \eqref{eq:ld-second-moment-split}. Set
\(u=T-t\) and \(v=T-s\), and take the trace against \(W_u\succeq0\)
to obtain \eqref{eq:ld-local-forward}. If \(t\in[t_i,t_{i+1}]\), then
\(u_{i+1}\leq u\leq u_i\), so monotonicity in
\eqref{eq:ld-increment} gives
\(\Pbar_{u_i}-\Pbar_u\preceq
\Pbar_{u_i}-\Pbar_{u_{i+1}}\), proving
\eqref{eq:ld-local-endpoint}.
\end{proof}

The two forms in \eqref{eq:ld-local-forward} serve different purposes. The
posterior-covariance form is used for the intrinsic-dimensional bound. The
\(\Rbar\)-form involves only one-time marginal moments of the posterior-mean
predictions. Since \(\E[m_u(X_u)]=\E[X_0]\) for every \(u\), differences of
\(\Rbar_u\) are also differences of the marginal covariance matrices of
\(m_u(X_u)\). They can therefore be estimated from forward-noised samples and
used in the adaptive-grid construction of Section~5. No evolution equation for
posterior covariance is used.

\subsection{Global discretization error via covariance increments}

As in Section~3.2, the cumulative error is an integral against a positive
Stieltjes measure. Here that measure is
\(d\Pbar_u=-d\Rbar_u\succeq0\). Metric entropy controls its weighted mass, while
the schedule comparison below has dimension-free constants. The forward
process supplies the Markov identity \eqref{eq:ld-increment} and a fixed-time
Gaussian corruption model; no evolution equation for \(\Pbar_u\) is used.
Define
\begin{equation}\label{eq:ld-forward-risk}
\mathcal I_{\mathrm{pm}}(u)
:=\Tr(S_u\Pbar_u).
\end{equation}
The matrix \(S_u\) defines the schedule-dependent metric
\begin{equation}\label{eq:ld-anisotropic-metric}
d_{\bbeta,u}(x,y)
:=\|x-y\|_{S_u}
=\|\Gamma_u^{-1/2}A_u(x-y)\|_2.
\end{equation}
Let
\begin{equation}\label{eq:ld-anisotropic-entropy}
\mathsf H_{\bbeta,u}(r)
:=\log N_{\mathrm{cover}}(\cX,d_{\bbeta,u},r),
\qquad
\mathfrak C_{\bbeta}(u)
:=\inf_{r>0}\{\mathsf H_{\bbeta,u}(r)+r^2\}.
\end{equation}
This metric measures which differences between clean samples remain
distinguishable after the forward corruption at time \(u\). The following
fixed-time estimate is the only consequence of low dimensionality needed in
the cumulative argument.

\begin{lemm}[Anisotropic entropy bound]\label{lem:ld-channel}
For every \(u>0\) and \(r>0\) with
\(N_{\mathrm{cover}}(\cX,d_{\bbeta,u},r)<\infty\),
\begin{equation}\label{eq:ld-anisotropic-finite-net}
\mathcal I_{\mathrm{pm}}(u)
\leq C\{\mathsf H_{\bbeta,u}(r)+r^2\}.
\end{equation}
Consequently,
\begin{equation}\label{eq:ld-anisotropic-complexity}
\mathcal I_{\mathrm{pm}}(u)\leq C\mathfrak C_{\bbeta}(u),
\end{equation}
and, for every Euclidean resolution \(\varepsilon>0\),
\begin{equation}\label{eq:ld-finite-net-main}
\mathcal I_{\mathrm{pm}}(u)
\leq C\left\{
\mathsf H_{\cX}(\varepsilon)+\|S_u\|_2\varepsilon^2
\right\}.
\end{equation}
\end{lemm}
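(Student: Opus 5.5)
The plan is to recognize \(\mathcal I_{\mathrm{pm}}(u)\) as the minimum mean-square error of a standard (identity-noise) Gaussian channel, and then bound that MMSE by mutual information, which a covering argument controls. \textbf{Step 1 (whitening).} I would set \(\theta:=\Gamma_u^{-1/2}A_uX_0\) and \(Y:=\Gamma_u^{-1/2}X_u=\theta+Z\) with \(Z\sim N(0,I)\) independent of \(\theta\), using \eqref{eq:forward-channel}. Since \(Y\) is an invertible function of \(X_u\), we have \(\Cov(\theta\mid Y)=\Gamma_u^{-1/2}A_uP_u(X_u)A_u^T\Gamma_u^{-1/2}\). Taking traces and expectations gives
\[
\E\|\theta-\E[\theta\mid Y]\|_2^2=\Tr(A_u^T\Gamma_u^{-1}A_u\Pbar_u)=\Tr(S_u\Pbar_u)=\mathcal I_{\mathrm{pm}}(u).
\]
Moreover, \(\|\theta-\theta'\|_2=d_{\bbeta,u}(x,x')\) by \eqref{eq:ld-anisotropic-metric}. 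Hence a \(d_{\bbeta,u}\)-cover of \(\cX\) of radius \(r\) is exactly a Euclidean cover of \(\Theta:=\Gamma_u^{-1/2}A_u\cX\) of radius \(r\).

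\textbf{Step 2 (MMSE \(\le\) mutual information).} For the family \(Y_\rho:=\sqrt\rho\,\theta+Z\), the I-MMSE relation of Guo--Shamai--Verdú gives \(\frac{d}{d\rho}I(\theta;Y_\rho)=\frac12\mathrm{mmse}(\rho)\), where \(\mathrm{mmse}(\rho):=\E\|\theta-\E[\theta\mid Y_\rho]\|^2\). This relation requires only \(\E\|\theta\|^2<\infty\), which follows from \(M_2<\infty\). Since \(\mathrm{mmse}(\rho)\) is nonincreasing in \(\rho\) (a larger \(\rho\) is a less degraded observation) and \(I(\theta;Y_0)=0\),
\[
\mathcal I_{\mathrm{pm}}(u)=\mathrm{mmse}(1)\le\int_0^1\mathrm{mmse}(\rho)\,d\rho=2I(\theta;Y).
\]

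\textbf{Step 3 (covering bound on the information).} Fix a minimal \(r\)-net \(\{c_1,\dots,c_N\}\) of \(\Theta\) with \(N=N_{\mathrm{cover}}(\cX,d_{\bbeta,u},r)\), and let \(\theta_c\) be a measurable nearest-net-point quantization of \(\theta\). The chain rule gives
\[
I(\theta;Y)\le I(\theta,\theta_c;Y)=I(\theta_c;Y)+I(\theta;Y\mid\theta_c).
\]
The first term is at most \(H(\theta_c)\le\log N=\mathsf H_{\bbeta,u}(r)\). For the second term, conditional on \(\theta_c\), I would use the variational bound \(I\le\E\,\KL(P_{Y\mid\theta}\|N(\theta_c,I))=\tfrac12\E\|\theta-\theta_c\|^2\le r^2/2\). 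Combining the three steps gives \(\mathcal I_{\mathrm{pm}}(u)\le2\mathsf H_{\bbeta,u}(r)+r^2\), which is \eqref{eq:ld-anisotropic-finite-net} with \(C=2\).

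\textbf{Consequences.} Taking the infimum over \(r\) gives \eqref{eq:ld-anisotropic-complexity}; values of \(r\) with infinite covering number contribute \(+\infty\) to the infimum and are irrelevant. For \eqref{eq:ld-finite-net-main}, a Euclidean \(\varepsilon\)-cover of \(\cX\) is a \(d_{\bbeta,u}\)-cover of radius \(\|S_u\|_2^{1/2}\varepsilon\), because \(\|x-y\|_{S_u}\le\|S_u\|_2^{1/2}\|x-y\|_2\). Applying \eqref{eq:ld-anisotropic-finite-net} with \(r=\|S_u\|_2^{1/2}\varepsilon\) then gives \(\mathcal I_{\mathrm{pm}}(u)\le C\{\mathsf H_{\cX}(\varepsilon)+\|S_u\|_2\varepsilon^2\}\).

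\textbf{Main obstacle.} I expect the hardest step to be the rigorous use of I-MMSE and of the monotonicity of \(\mathrm{mmse}(\rho)\) for a possibly singular, unbounded \(q_0\) with only a finite second moment. I would appeal to the general Guo--Shamai--Verdú result, which holds under \(\E\|\theta\|^2<\infty\). If that is not acceptable, I would replace Step 2 with a direct argument: use a truncation of \(q_0\) and pass to the limit by monotone convergence. Measurability of the nearest-point quantizer is routine, by breaking ties via the smallest index.
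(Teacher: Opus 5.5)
Your argument is correct, and it takes a genuinely different route from the paper's. After the same whitening, the paper never introduces mutual information. It fixes an $r$-net in $d_{\bbeta,u}$ and analyzes the minimum-distance estimator over the net conditionally on $X_0$. It bounds the resulting Gaussian maximum $\E\max_j\{-2\langle Z,a_j\rangle-\tfrac12\|a_j\|^2\}\le 8\log M$ by an exponential-moment and log-sum-exp argument. It then concludes by Bayes optimality of the posterior mean for the quadratic loss $\|\cdot\|_{S_u}^2$. You instead obtain $\mathcal I_{\mathrm{pm}}(u)\le 2I(\theta;Y)$ from I-MMSE together with monotonicity of the MMSE under degradation, and then control $I(\theta;Y)$ by quantization and the chain rule.

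The paper's route is fully elementary: it uses only the Gaussian moment generating function, and it exhibits an explicit estimator whose risk already satisfies the bound. Yours imports the Guo--Shamai--Verd\'u theorem but gives the explicit constant $C=2$, whereas the paper's computation gives $C=16$. Your intermediate inequality $\mathcal I_{\mathrm{pm}}(u)\le 2H(\theta_c)+\E\|\theta-\theta_c\|^2$ is also stronger: $\log N$ may be replaced by the entropy of the quantized signal, or more generally by a rate--distortion quantity that depends on $q_0$ rather than only on its support.

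Finally, the obstacle you anticipate does not arise. If $N_{\mathrm{cover}}(\cX,d_{\bbeta,u},r)<\infty$, then $\Gamma_u^{-1/2}A_u\cX$ is covered by finitely many Euclidean balls, so $\theta$ is a bounded random vector. The classical I-MMSE relation therefore applies without any truncation.
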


The estimate concerns the whitened forward observation
\[
\Gamma_u^{-1/2}X_u=\Gamma_u^{-1/2}A_uX_0+Z.
\]
It compares the posterior mean with a minimum-distance estimator over a finite
net in the metric \eqref{eq:ld-anisotropic-metric}. Since
\(d_{\bbeta,u}(x,y)\leq\|S_u\|_2^{1/2}\|x-y\|_2\), the Euclidean estimate
\eqref{eq:ld-finite-net-main} follows immediately. Covering arguments also
underlie low-dimensional DDPM bounds in \cite{huang2026ddpm}; the anisotropic
fixed-time estimate is proved in
Appendix~\ref{app:intrinsic-proofs}.

\begin{prop}[Continuous forward cumulative reduction]
\label{prop:ld-cumulative-reduction}
Let \(a(u):=\min\{1,u\}\). Under Assumptions~\ref{assu:baseline}
and~\ref{assu:relative-grid},
\begin{equation}\label{eq:ld-S-bounds}
\|S_u\|_2\leq\frac1{e^{m_\bbeta u}-1},
\qquad
W_u\preceq\frac{C}{a(u)}S_u,
\qquad u>0.
\end{equation}
\begin{equation}\label{eq:ld-continuous-reduction}
\operatorname{Disc}_{\mathrm{pm}}
\leq
C\kappa\left\{
\mathcal I_{\mathrm{pm}}(T)
+\int_{t_{\min}}^T
\frac{\mathcal I_{\mathrm{pm}}(u)}{a(u)}\,du
\right\}.
\end{equation}
The constant \(C\) depends only on \(m_\bbeta\) and \(B_\bbeta\), and is
independent of \(d\).
\end{prop}

\begin{proof}
The identity \(\Gamma_u=I-A_uA_u^T\) and the contraction estimate
\(\|A_u\|_2^2\leq e^{-m_\bbeta u}\) give
\[
\Gamma_u\succeq(1-e^{-m_\bbeta u})I,
\qquad
\|S_u\|_2
\leq\frac{e^{-m_\bbeta u}}{1-e^{-m_\bbeta u}}.
\]
Moreover, \eqref{eq:ld-S-W} and
\(\bbeta(u)\preceq B_\bbeta I\) imply
\[
W_u\preceq
\frac{B_\bbeta}{1-e^{-m_\bbeta u}}S_u
\preceq\frac{C}{a(u)}S_u,
\]
which proves \eqref{eq:ld-S-bounds}.

By \eqref{eq:ld-increment}, \(\Pbar_u\) is nondecreasing in the
positive-semidefinite order and therefore induces a positive-semidefinite
matrix-valued Stieltjes measure \(d\Pbar_u\). Fubini's theorem gives
\begin{equation}\label{eq:ld-stieltjes-fubini}
\operatorname{Disc}_{\mathrm{pm}}
=\frac12\sum_{i=0}^{N-1}
\int_{(u_{i+1},u_i]}
\Tr\left[
\{S_{u_{i+1}}-S_r\}\,d\Pbar_r
\right].
\end{equation}
Indeed, the inner integral generated by exchanging the order of integration is
\(\int_{u_{i+1}}^rW_u\,du=S_{u_{i+1}}-S_r\).

The differential bound in \eqref{eq:ld-S-bounds} and
Assumption~\ref{assu:relative-grid} imply, uniformly for
\(r\in[u_{i+1},u_i]\),
\begin{equation}\label{eq:ld-local-S-comparison}
0\preceq S_{u_{i+1}}-S_r\preceq C\kappa S_r.
\end{equation}
To see this, integrate \(W_u\preceq C a(u)^{-1}S_u\) over the interval and
use
\(\int_{u_{i+1}}^{u_i}a(u)^{-1}du\leq C\kappa\); the same differential
inequality gives uniform comparability of \(S_u\) within the interval.
Combining \eqref{eq:ld-stieltjes-fubini} and
\eqref{eq:ld-local-S-comparison} yields
\[
\operatorname{Disc}_{\mathrm{pm}}
\leq C\kappa
\int_{(t_{\min},T]}\Tr(S_u\,d\Pbar_u).
\]
Stieltjes integration by parts and \(S_u'=-W_u\) give
\begin{align*}
\int_{(t_{\min},T]}\Tr(S_u\,d\Pbar_u)
&=\mathcal I_{\mathrm{pm}}(T)
-\mathcal I_{\mathrm{pm}}(t_{\min})
+\int_{t_{\min}}^T\Tr(W_u\Pbar_u)\,du\\
&\leq\mathcal I_{\mathrm{pm}}(T)
+C\int_{t_{\min}}^T
\frac{\mathcal I_{\mathrm{pm}}(u)}{a(u)}\,du,
\end{align*}
where the last line again uses \eqref{eq:ld-S-bounds}. This proves
\eqref{eq:ld-continuous-reduction}.
\end{proof}

\begin{theo}[Cumulative intrinsic-dimensional discretization bound]
\label{theo:ld-cumulative}
Suppose Assumptions~\ref{assu:baseline} and~\ref{assu:relative-grid} hold. Put
\[
L:=T+\log\frac1{t_{\min}}.
\]
Then the schedule-dependent entropy in \eqref{eq:ld-anisotropic-entropy}
satisfies
\begin{equation}\label{eq:ld-main-disc-anisotropic}
\operatorname{Disc}_{\mathrm{pm}}
\leq
C\kappa\left\{
\mathfrak C_{\bbeta}(T)
+\int_{t_{\min}}^T\frac{\mathfrak C_{\bbeta}(u)}{a(u)}\,du
\right\}.
\end{equation}
For every \(\varepsilon>0\) with
\(N_{\mathrm{cover}}(\cX,\|\cdot\|_2,\varepsilon)<\infty\),
\begin{equation}\label{eq:ld-main-disc-resolution}
\operatorname{Disc}_{\mathrm{pm}}
\leq
C\kappa\left\{
\mathsf H_{\cX}(\varepsilon)L
+\frac{\varepsilon^2}{t_{\min}}
\right\}.
\end{equation}
Consequently, if Assumption~\ref{assu:ld-entropy} also holds and the fixed
covering resolution satisfies
\begin{equation}\label{eq:ld-resolution-compatibility}
\varepsilon_0^2\leq t_{\min}k\log k,
\end{equation}
then
\begin{equation}\label{eq:ld-main-disc}
\operatorname{Disc}_{\mathrm{pm}}
\leq
C\kappa k\log k
\left(T+\log\frac1{t_{\min}}\right).
\end{equation}
The constants are independent of \(d,k,N,T,t_{\min}\) and of the data law
beyond the displayed assumptions.
\end{theo}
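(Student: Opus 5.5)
The plan is to combine Proposition~\ref{prop:ld-cumulative-reduction} with Lemma~\ref{lem:ld-channel} and then evaluate the resulting integrals.

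\textbf{Anisotropic bound.} By \eqref{eq:ld-anisotropic-complexity}, \(\mathcal I_{\mathrm{pm}}(u)\leq C\mathfrak C_\bbeta(u)\) for every \(u>0\). This bound applies pointwise at \(u=T\) and inside the integral \(\int_{t_{\min}}^T\mathcal I_{\mathrm{pm}}(u)a(u)^{-1}du\). Substituting it into \eqref{eq:ld-continuous-reduction} gives \eqref{eq:ld-main-disc-anisotropic} immediately.

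\textbf{Euclidean bound.} Fix \(\varepsilon\) with a finite Euclidean covering and use \eqref{eq:ld-finite-net-main}:
\[
\mathcal I_{\mathrm{pm}}(u)\leq C\{\mathsf H_\cX(\varepsilon)+\|S_u\|_2\varepsilon^2\}.
\]
Insert this into \eqref{eq:ld-continuous-reduction} and handle the two pieces separately.

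For the entropy piece, \(\mathsf H_\cX(\varepsilon)\) does not depend on \(u\). The weight integrates to
\[
\int_{t_{\min}}^T a(u)^{-1}du=\log\frac1{t_{\min}}+(T-1)\leq L,
\]
and the endpoint term \(\mathsf H_\cX(\varepsilon)\) at \(u=T\) is absorbed because \(L\geq T\geq1\). This gives \(C\kappa\,\mathsf H_\cX(\varepsilon)L\).

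For the resolution piece, use \(\|S_u\|_2\leq(e^{m_\bbeta u}-1)^{-1}\) from \eqref{eq:ld-S-bounds}.
\begin{itemize}
\item For \(u\leq1\) this is at most \(C/u\), so the integrand \(\|S_u\|_2/a(u)\) is at most \(C/u^2\). Its integral over \([t_{\min},1]\) is at most \(C/t_{\min}\).
\item For \(u\geq1\) the bound decays like \(e^{-m_\bbeta u}\), so the integral over \([1,T]\) is at most a constant.
\item The endpoint term \(\|S_T\|_2\leq C\) is also bounded.
\end{itemize}
Since \(t_{\min}\leq1\), all constants are dominated by \(1/t_{\min}\), giving \(C\kappa\,\varepsilon^2/t_{\min}\). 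Together these prove \eqref{eq:ld-main-disc-resolution}.

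\textbf{Specialization.} Take \(\varepsilon=\varepsilon_0\). By Assumption~\ref{assu:ld-entropy},
\[
\mathsf H_\cX(\varepsilon_0)\leq C_{\mathrm{cover}}k\log(1/\varepsilon_0)=C_{\mathrm{cover}}C_0\,k\log k.
\]
This bound also guarantees that the covering is finite. The compatibility condition \eqref{eq:ld-resolution-compatibility} gives \(\varepsilon_0^2/t_{\min}\leq k\log k\leq k\log k\cdot L\), again using \(L\geq1\). Summing the two contributions yields \eqref{eq:ld-main-disc}, with constants depending only on \(m_\bbeta,B_\bbeta,C_0,C_{\mathrm{cover}}\).

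\textbf{Main obstacle.} Nearly everything is bookkeeping. The one delicate point is the small-\(u\) singularity: the entropy term contributes only \(\log(1/t_{\min})\), but the \(\|S_u\|_2\varepsilon^2\) term produces the \(1/u^2\) integrand. That is why the resolution must be coupled to \(t_{\min}\) via \eqref{eq:ld-resolution-compatibility}. Carrying this out requires confirming the elementary inequality \((e^{m_\bbeta u}-1)^{-1}\leq (m_\bbeta u)^{-1}\), so that the constant remains free of \(d\) and \(k\).
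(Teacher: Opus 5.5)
Your proposal is correct and follows the paper's proof essentially step for step. You substitute Lemma~\ref{lem:ld-channel} into \eqref{eq:ld-continuous-reduction}, use \(1+\int_{t_{\min}}^T a(u)^{-1}\,du=L\) for the entropy piece, bound the resolution piece by \(C/t_{\min}\) via \(\|S_u\|_2\leq(e^{m_\bbeta u}-1)^{-1}\), and specialize to \(\varepsilon_0\) using \(L\geq1\); your remark that Assumption~\ref{assu:ld-entropy} guarantees a finite covering at \(\varepsilon_0\) makes explicit a detail the paper leaves implicit.
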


\begin{proof}
Insert \eqref{eq:ld-anisotropic-complexity} into
\eqref{eq:ld-continuous-reduction} to obtain
\eqref{eq:ld-main-disc-anisotropic}. Alternatively, inserting the Euclidean
specialization \eqref{eq:ld-finite-net-main} and using
\[
1+\int_{t_{\min}}^T\frac{du}{a(u)}
=T+\log\frac1{t_{\min}}=L,
\]
the metric-entropy contribution is at most
\(C\kappa\mathsf H_{\cX}(\varepsilon)L\). For the resolution term,
\eqref{eq:ld-S-bounds} gives
\[
\|S_T\|_2
+\int_{t_{\min}}^T\frac{\|S_u\|_2}{a(u)}\,du
\leq
C\left\{1+\int_{t_{\min}}^1\frac{du}{u^2}
+\int_1^T e^{-m_\bbeta u}\,du\right\}
\leq\frac{C}{t_{\min}}.
\]
This proves \eqref{eq:ld-main-disc-resolution}. Finally, take
\(\varepsilon=\varepsilon_0=k^{-C_0}\). Assumption~\ref{assu:ld-entropy}
controls the first term by \(C\kappa k\log k\,L\), while
\eqref{eq:ld-resolution-compatibility} gives
\[
\frac{\kappa\varepsilon_0^2}{t_{\min}}
\leq C\kappa k\log k\,L
\]
because \(L\geq1\). This proves \eqref{eq:ld-main-disc} directly from
\eqref{eq:ld-main-disc-resolution}.
\end{proof}

\subsection{Overall KL guarantee and step complexity}

Theorem~\ref{theo:ld-cumulative} controls posterior-mean-freezing
discretization. Combining it with the posterior-mean line of
Theorem~\ref{theo:kl-decomposition} and the forward mixing bound in
Lemma~\ref{lem:terminal-mixing} gives the final KL guarantee.

\begin{theo}[Intrinsic-dimensional relative-grid KL guarantee and step complexity]
\label{theo:ld-main}
Suppose Assumptions~\ref{assu:baseline},~\ref{assu:relative-grid},
and~\ref{assu:ld-entropy} hold.
Suppose also that the covering resolution satisfies
\eqref{eq:ld-resolution-compatibility} and that the posterior-mean approximation budget
\eqref{eq:ld-estimation-error} is at most \(\epsilon_{\mathrm{pm}}^2\). Then
\begin{equation}\label{eq:ld-main-kl}
\KL(q_{t_{\min}}\|\widehat p_{T-t_{\min}})
\leq
\epsilon_{\mathrm{pm}}^2+\tau_T
+C\kappa k\log k
\left(T+\log\frac1{t_{\min}}\right).
\end{equation}
If the grid is also rate-regular in the sense of
\eqref{eq:relative-grid-rate}, then
\begin{equation}\label{eq:ld-main-kl-N}
\KL(q_{t_{\min}}\|\widehat p_{T-t_{\min}})
\leq
\epsilon_{\mathrm{pm}}^2+\tau_T
+\frac{Ck\log k}{N}
\left(T+\log\frac1{t_{\min}}\right)^2.
\end{equation}
Under this rate-regular condition, for the overall intrinsic-dimensional
complexity additionally suppose
Assumption~\ref{assu:ld-radius} holds and, for a fixed universal exponent
\(C_d>0\),
\begin{equation}\label{eq:ld-ambient-polynomial-regime}
d\leq k^{C_d}.
\end{equation}
Let \(0<\varepsilon\leq1\), suppose
\(\epsilon_{\mathrm{pm}}^2\leq\varepsilon^2/3\), and take
\[
T=\max\left\{1,\frac1{m_\bbeta}
\log\frac{4(M_2+d)}{\varepsilon^2}\right\}.
\]
For a sufficiently large constant \(C\), the choice
\begin{equation}\label{eq:ld-main-complexity}
\begin{split}
N
&=
\left\lceil
\frac{Ck\log k}{\varepsilon^2}
\left(T+\log\frac1{t_{\min}}\right)^2
\right\rceil\\
&=
O\left(
\frac{k\log k}{\varepsilon^2}
\log^2\frac{k}{\varepsilon}
\right)
\end{split}
\end{equation}
guarantees
\(\KL(q_{t_{\min}}\|\widehat p_{T-t_{\min}})\leq\varepsilon^2\).
\end{theo}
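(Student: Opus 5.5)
The plan is to assemble three earlier results. For \eqref{eq:ld-main-kl} I will insert the cumulative bound \eqref{eq:ld-main-disc} of Theorem~\ref{theo:ld-cumulative} into the posterior-mean line \eqref{eq:ld-kl-decomposition} of Theorem~\ref{theo:kl-decomposition}. I will then bound \(\KL(q_T\|\pi)\leq\tau_T\) by Lemma~\ref{lem:terminal-mixing}. The hypotheses match exactly: Assumptions~\ref{assu:baseline},~\ref{assu:relative-grid},~\ref{assu:ld-entropy} together with the compatibility condition \eqref{eq:ld-resolution-compatibility} are what Theorem~\ref{theo:ld-cumulative} requires. For \eqref{eq:ld-main-kl-N} I will substitute the rate-regular bound \eqref{eq:relative-grid-rate}, \(\kappa\leq C_{\mathrm{grid}}L/N\) with \(L=T+\log(1/t_{\min})\), which squares the factor \(L\).

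For the complexity statement, I will allot \(\varepsilon^2/3\) to each of the three terms. The learning term is below \(\varepsilon^2/3\) by hypothesis. For the terminal term, the choice \(T\geq m_\bbeta^{-1}\log(4(M_2+d)/\varepsilon^2)\) gives \(\Lambda_T\geq m_\bbeta T\geq\log(4(M_2+d)/\varepsilon^2)\geq\log 2\). Hence \(\tau_T\leq M_2e^{-\Lambda_T}/2+de^{-2\Lambda_T}/2\leq \varepsilon^2/8+\varepsilon^2/8<\varepsilon^2/3\), using \(e^{-2\Lambda_T}\leq e^{-\Lambda_T}\). For the discretization term, the stated \(N\) with a large enough constant makes \(Ck\log k\,L^2/N\leq\varepsilon^2/3\). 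One check is needed: Assumption~\ref{assu:relative-grid} requires \(\kappa\leq1\). For the hybrid grid this holds once \(\Delta_N\leq c_0\), and that follows from the same large choice of \(N\) because \(k\log k/\varepsilon^2\geq1\).

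The remaining step, and the only one requiring care, is converting \(L^2\) into \(O(\log^2(k/\varepsilon))\). First I will bound \(T\). Assumption~\ref{assu:ld-radius} gives \(M_2\leq R^2=k^{2C_R}\), and \eqref{eq:ld-ambient-polynomial-regime} gives \(d\leq k^{C_d}\). Therefore \(\log(4(M_2+d)/\varepsilon^2)\leq C\log(k/\varepsilon)\), with \(C\) depending on \(C_R,C_d,m_\bbeta\). Next I will bound \(\log(1/t_{\min})\). Here I will read \eqref{eq:ld-resolution-compatibility} as fixing \(t_{\min}\) at the polynomial scale \(t_{\min}\asymp\varepsilon_0^2/(k\log k)=k^{-2C_0}/(k\log k)\), the largest admissible order. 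This gives \(\log(1/t_{\min})\leq C\log k\). So I expect the main obstacle is interpretive rather than analytic. The \(O(\cdot)\) form tacitly treats \(t_{\min}\) as polynomial in \(k\) (or \(k/\varepsilon\)). I would state this explicitly, noting that a smaller \(t_{\min}\) only adds the visible \(\log(1/t_{\min})\) term to the first expression for \(N\). Together these give \(L\leq C\log(k/\varepsilon)\) (using \(k\geq2\) and \(\varepsilon\leq1\)) and hence the claimed \(N=O(k\log k\,\varepsilon^{-2}\log^2(k/\varepsilon))\). The constant depends additionally only on \(C_R\) and \(C_d\), consistent with the section's constant convention.
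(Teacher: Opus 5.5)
Your proposal is correct and follows the paper's route. It obtains \eqref{eq:ld-main-kl} from Theorem~\ref{theo:ld-cumulative}, the posterior-mean line of Theorem~\ref{theo:kl-decomposition} and Lemma~\ref{lem:terminal-mixing}, then substitutes the rate-regular bound and converts \(L^2\) using \(M_2\leq R^2=k^{2C_R}\) and \(d\leq k^{C_d}\), as in Remark~\ref{rema:ld-terminal}.

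One correction: \eqref{eq:ld-resolution-compatibility} is a \emph{lower} bound \(t_{\min}\geq\varepsilon_0^2/(k\log k)\). So \(\varepsilon_0^2/(k\log k)\) is the smallest admissible scale, not the largest. The estimate \(\log(1/t_{\min})\leq(2C_0+1)\log k+\log\log k\) therefore follows directly from the hypothesis, which is how the paper gets \(\log(1/t_{\min})=O(\log k)\). No tacit assumption on \(t_{\min}\) is needed, and a smaller \(t_{\min}\) is excluded by the hypothesis rather than merely adding a visible term.
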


\begin{rema}[Terminal dimension dependence]\label{rema:ld-terminal}
Theorem~\ref{theo:ld-main} replaces \(d\) by \(k\log k\) in the
cumulative discretization term, but not automatically in the standard-Gaussian
initialization term. This is unavoidable at fixed \(T\). For example, when
\(X_0=0\) and \(\bbeta(u)=2I\), the terminal law is
\(N(0,(1-e^{-2T})I)\), whose KL divergence from \(N(0,I)\) is proportional to
\(de^{-4T}\) for large \(T\). The horizon in Theorem~\ref{theo:ld-main}
suppresses this term, so \(d\) enters the step complexity only logarithmically.
For the final line of \eqref{eq:ld-main-complexity},
Assumption~\ref{assu:ld-radius} gives \(M_2\leq R^2=k^{2C_R}\), while
the resolution condition \eqref{eq:ld-resolution-compatibility} gives
\(\log(1/t_{\min})=O(\log k)\), and
\eqref{eq:ld-ambient-polynomial-regime} controls \(d\).
Hence the complexity is \(\widetilde O(k/\varepsilon^2)\).
This treatment of initialization is also used in intrinsic-dimensional DDPM
bounds \cite{huang2026ddpm}. If the sampler is instead initialized from
\(N(0,\Gamma_T)\), convexity of KL gives the covariance-matched estimate
\[
\KL(q_T\|N(0,\Gamma_T))
\leq
\frac12\E\|\Gamma_T^{-1/2}A_TX_0\|^2.
\]
Its dimension dependence is only through the directional second moment of the
data, which is \(O(k)\) when that moment is intrinsically scaled.
\end{rema}

\section{Schedule and grid design from forward corruptions}
\label{sec:adaptive-grids}

Sections~3 and~4 identify the local growth of the two frozen-prediction errors
and control their cumulative values under the forward corruption law. We now
turn those local quantities into design criteria. First, with the instantaneous
noise budget fixed, we choose the direction of the matrix schedule. Second,
with the schedule fixed, we allocate a finite number of reverse steps. This
order separates the geometry of the forward diffusion from its numerical time
resolution. A fixed instantaneous noise budget means a prescribed spectrum at
every forward time, so competing schedules have the same trace and eigenvalues
and differ only in directional allocation.

For score freezing, recall
\[
\mathcal H_s
:=
\E_{X_{T-s}}\!\left[
h_{T-s}(X_{T-s})h_{T-s}(X_{T-s})^T
\right]
\]
and define, for \(0\leq s<t\leq t_N:=T-t_{\min}\),
\begin{equation}\label{eq:adaptive-score-costs}
\mathcal B_{\mathrm{sc}}(s,t)
:=E_{s,t}
=K(t)-K(s)+\Tr(C_{s,t}\mathcal H_s).
\end{equation}
For posterior-mean freezing, put \(u_s=T-s\) and recall
\(\Rbar_u=\E[m_u(X_u)m_u(X_u)^T]\). Theorem~\ref{theo:ld-local-error} gives
\begin{equation}\label{eq:adaptive-pm-costs}
\mathcal B_{\mathrm{pm}}(s,t)
:=\frac12E_{s,t}^{\mathrm{pm}}
=\frac12\Tr\left[W_{T-t}\{\Rbar_{T-t}-\Rbar_{u_s}\}\right].
\end{equation}
The factor \(1/2\) in \(\mathcal B_{\mathrm{pm}}\) is the Girsanov factor. For
either \(\star\in\{\mathrm{sc},\mathrm{pm}\}\), let
\begin{align}
\mathcal C_\star[s,b]
&:=\int_s^b\mathcal B_\star(s,t)\,dt,
\label{eq:adaptive-interval-cost}\\
g_\star(s)
&:=\left.\partial_t^+\mathcal B_\star(s,t)\right|_{t=s}.
\label{eq:adaptive-local-growth}
\end{align}
Positive forward time and Assumption~\ref{assu:baseline} justify the right
derivative. At schedule breakpoints it is taken from the side \(t>s\). Writing
\(u=T-s\), direct differentiation gives
\begin{equation}\label{eq:adaptive-growth-formulas}
\begin{aligned}
g_{\mathrm{sc}}(s)
&=K'(s)+\Tr\!\left[
\{\dot\bbeta(u)+\bbeta(u)^2\}\mathcal H_s
\right],\\
g_{\mathrm{pm}}(s)
&=\frac12\Tr\{W_u(-\Rbar_u')\}.
\end{aligned}
\end{equation}
Here \(\dot\bbeta(u)=d\bbeta(u)/du\). In either branch,
\begin{equation}\label{eq:adaptive-cost-growth-equivalence}
\mathcal C_\star[s,s+\Delta]
=\frac12g_\star(s)\Delta^2+o(\Delta^2).
\end{equation}
Thus \(g_\star\) is the common infinitesimal cost used below.

\subsection{Schedule alignment under a fixed noise budget}

At each forward time, prescribe a positive diagonal matrix \(\Lambda(u)\) and
consider
\begin{equation}\label{eq:fixed-spectrum-class}
\mathcal A_u
:=\{U\Lambda(u)U^T:U\in O(d)\}.
\end{equation}
Schedules in \(\mathcal A_u\) have the same instantaneous spectrum, trace, and
contraction bounds; only their directions differ. The alignment criterion
depends on the frozen prediction target.

\paragraph{Score alignment.}
For a symmetric matrix \(A\), define
\[
\mathscr K_A(u):=\E[h_u(X_u)^TAh_u(X_u)].
\]
Thus \(\mathscr K_{\bbeta(u)}(u)=K(T-u)\). For \(A\succeq0\), define the
weighted score-Hessian energy
\begin{equation}\label{eq:score-hessian-energy}
\widetilde J_A(u)
:=\int q_u(x)
\left\|A^{1/2}\nabla^2\log q_u(x)A^{1/2}\right\|_F^2dx.
\end{equation}

\begin{prop}[Forward score-curvature identity]
\label{prop:forward-score-curvature}
On each smooth interval of \(\bbeta\),
\begin{equation}\label{eq:ordinary-fisher-evolution}
\frac{d}{du}\mathscr K_{\bbeta(u)}(u)
=\mathscr K_{\dot\bbeta(u)+\bbeta(u)^2}(u)
-\widetilde J_{\bbeta(u)}(u).
\end{equation}
Consequently, for \(u=T-s\),
\begin{equation}\label{eq:score-growth-curvature}
g_{\mathrm{sc}}(s)=\widetilde J_{\bbeta(u)}(u).
\end{equation}
\end{prop}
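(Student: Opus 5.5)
The plan is to obtain \eqref{eq:ordinary-fisher-evolution} from Proposition~\ref{prop:fisher-evolution}, not by a new Fokker--Planck computation. I will convert every relative quantity (\(I_A\), \(J_\bbeta\)) into its ordinary-score counterpart (\(\mathscr K_A\), \(\widetilde J_A\)) using Gaussian integration-by-parts identities. Then I will differentiate the resulting moment remainder explicitly.

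\emph{Step 1 (conversion identities).} Since \(\nabla\log r_u=h_u+x\) and \(H_u=\nabla^2\log q_u+I\), the computation preceding \eqref{eq:K-information} applies for any symmetric \(A\). It uses \(\E[X_u^TAh_u(X_u)]=-\Tr A\) and gives
\[
I_A(u)=\mathscr K_A(u)-2\Tr A+\Tr(A\Sigma_u).
\]
For the Hessian term, I expand the Frobenius square
\[
\|\bbeta^{1/2}(\nabla^2\log q_u+I)\bbeta^{1/2}\|_F^2
=\|\bbeta^{1/2}\nabla^2\log q_u\bbeta^{1/2}\|_F^2
+2\Tr(\bbeta^2\nabla^2\log q_u)+\Tr\bbeta^2 .
\]
I then use the second-order integration by parts \(\E[\nabla^2\log q_u(X_u)]=-\E[h_uh_u^T]\), which follows from \(\nabla^2 q_u=q_u(\nabla^2\log q_u+h_uh_u^T)\) and \(\int\nabla^2q_u=0\). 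This yields
\[
J_\bbeta(u)=\widetilde J_{\bbeta}(u)-2\mathscr K_{\bbeta^2}(u)+\Tr\bbeta^2 .
\]
Both identities need the same cutoff justification as in Section~3. At positive forward time the Gaussian smoothing gives enough decay of \(q_u\) and its derivatives, so the boundary terms vanish.

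\emph{Step 2 (differentiate the remainder).} Write \(\mathscr K_{\bbeta(u)}(u)=I_\bbeta(u)+G(u)\), with \(G\) as in \eqref{eq:G-definition}. From \eqref{eq:intro-forward-SDE} and It\^o's formula, \(\Sigma_u'=-\tfrac12(\bbeta\Sigma_u+\Sigma_u\bbeta)+\bbeta\). Hence
\[
G'(u)=2\Tr\dot\bbeta-\Tr(\dot\bbeta\Sigma_u)+\Tr(\bbeta^2\Sigma_u)-\Tr\bbeta^2 .
\]
Next I insert Step~1 into Proposition~\ref{prop:fisher-evolution} with \(A=\dot\bbeta\) and \(A=\bbeta^2\), and add \(G'\). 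The result is \(\mathscr K_{\dot\bbeta}+\mathscr K_{\bbeta^2}-\widetilde J_\bbeta\) plus a collection of constant and \(\Sigma\)-terms. Tracking these terms shows they cancel exactly:
\begin{itemize}
\item \(\pm2\Tr\dot\bbeta\) and \(\pm\Tr(\dot\bbeta\Sigma_u)\) cancel in pairs;
\item the \(\Tr\bbeta^2\) terms have coefficients \(2-1-1=0\);
\item \(\pm\Tr(\bbeta^2\Sigma_u)\) cancel.
\end{itemize}
By linearity of \(A\mapsto\mathscr K_A\), this gives \eqref{eq:ordinary-fisher-evolution}.

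\emph{Step 3 (growth rate).} Since \(K(s)=\mathscr K_{\bbeta(u)}(u)\) with \(u=T-s\), the chain rule gives \(K'(s)=-\tfrac{d}{du}\mathscr K_{\bbeta(u)}(u)\). Moreover, \(\Tr[(\dot\bbeta+\bbeta^2)\mathcal H_s]=\mathscr K_{\dot\bbeta+\bbeta^2}(u)\) by the definition of \(\mathcal H_s\). Substituting both into the first line of \eqref{eq:adaptive-growth-formulas}, the \(\mathscr K\)-terms cancel and \(g_{\mathrm{sc}}(s)=\widetilde J_{\bbeta(u)}(u)\), which is \eqref{eq:score-growth-curvature}.

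If needed, I will also confirm the formula for \(g_{\mathrm{sc}}\) itself. Using \(\Phi(T-s,T-t)=I-\tfrac12\bbeta(T-s)(t-s)+o(t-s)\), a one-sided expansion of \(C_{s,t}\) gives \(\partial_t^+C_{s,t}|_{t=s}=\dot\bbeta+\bbeta^2\); note that the derivative of \(\bbeta(T-t)\) contributes with the sign reversed by \(u=T-t\).

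\emph{Main obstacle.} The algebra is routine. The delicate point is justifying Step~1 and the differentiability of Proposition~\ref{prop:fisher-evolution} near schedule breakpoints. On each smooth interval I will rely on the Gaussian representation \eqref{eq:forward-channel}, which makes \(h_u\) and \(\nabla^2\log q_u\) have polynomially controlled moments for \(u>0\).
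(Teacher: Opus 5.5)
Your proposal is correct and takes essentially the paper's route. You combine the conversion $\mathscr K_A(u)=I_A(u)+2\Tr A-\Tr(A\Sigma_u)$, the second-moment equation, Proposition~\ref{prop:fisher-evolution}, and the Hessian identity $J_\bbeta=\widetilde J_{\bbeta}-2\mathscr K_{\bbeta^2}+\Tr\bbeta^2$, after which the $\mathscr K_{\dot\bbeta+\bbeta^2}$ terms cancel in $g_{\mathrm{sc}}$. The only difference is bookkeeping order: the paper first forms $\frac{d}{du}\mathscr K_{\bbeta}=\mathscr K_{\dot\bbeta}-\mathscr K_{\bbeta^2}+\Tr\bbeta^2-J_\bbeta$ and converts $J_\bbeta$ last, whereas you convert all relative quantities before tallying the cancellations.
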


The proof is given in Appendix~\ref{app:adaptive-grid-proof}. The cancellation
of \(\dot\bbeta\) shows that, conditional on the current marginal, the leading
local error depends on the schedule direction but not its rotation speed.
Assigning a large noising rate to a direction of large score curvature is
therefore expensive. Given the current forward marginal, the local
fixed-spectrum oracle is
\begin{equation}\label{eq:local-schedule-oracle}
B_{\mathrm{or}}(u)
\in\operatorname*{argmin}_{B\in\mathcal A_u}\widetilde J_B(u).
\end{equation}
For a two-dimensional block this is a one-dimensional angle search. The
identity remains exact for rotating schedules; the corresponding cumulative
worst-case bound is stated in Remark~\ref{rema:nonmonotone-schedules}.

The Gaussian case gives an explicit alignment rule.
\begin{coro}[Gaussian fixed-spectrum alignment]
\label{prop:gaussian-schedule-diagnostic}
Suppose \(q_u=N(\mu_u,V_u)\), where \(V_u\succ0\). Then, at \(u=T-s\),
\begin{equation}\label{eq:gaussian-schedule-growth}
g_{\mathrm{sc}}(s)
=\Tr\!\left[\bbeta(u)V_u^{-1}\bbeta(u)V_u^{-1}\right]
=\left\|V_u^{-1/2}\bbeta(u)V_u^{-1/2}\right\|_F^2.
\end{equation}
For a two-dimensional fixed spectrum, a minimizer shares the eigenvectors of
\(V_u\) and pairs the larger rate with the larger variance.
\end{coro}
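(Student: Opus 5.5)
Since \(q_u=N(\mu_u,V_u)\), the score \(h_u(x)=-V_u^{-1}(x-\mu_u)\) is affine and \(\nabla^2\log q_u(x)\equiv -V_u^{-1}\) is constant in \(x\). The plan is therefore to start from the identity \(g_{\mathrm{sc}}(s)=\widetilde J_{\bbeta(u)}(u)\) of Proposition~\ref{prop:forward-score-curvature} and evaluate the weighted score-Hessian energy \eqref{eq:score-hessian-energy} directly. The integrand does not depend on \(x\), and \(q_u\) integrates to one, so
\[
\widetilde J_{\bbeta(u)}(u)=\bigl\|\bbeta(u)^{1/2}V_u^{-1}\bbeta(u)^{1/2}\bigr\|_F^2
=\Tr\!\left[\bbeta(u)^{1/2}V_u^{-1}\bbeta(u)V_u^{-1}\bbeta(u)^{1/2}\right]
=\Tr\!\left[\bbeta(u)V_u^{-1}\bbeta(u)V_u^{-1}\right].
\]
The last equality uses cyclicity of the trace. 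Applying cyclicity once more, with \(V_u^{-1}=V_u^{-1/2}V_u^{-1/2}\), rewrites this as \(\|V_u^{-1/2}\bbeta(u)V_u^{-1/2}\|_F^2\), which proves \eqref{eq:gaussian-schedule-growth}.

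As a consistency check, independent of Proposition~\ref{prop:forward-score-curvature}, one can also derive the formula from \eqref{eq:adaptive-growth-formulas}. Here \(\mathcal H_s=V_u^{-1}\) and \(K(s)=\Tr(\bbeta(u)V_u^{-1})\). The Gaussian forward covariance evolves by the Lyapunov equation \(\dot V=-\tfrac12(\bbeta V+V\bbeta)+\bbeta\). Differentiating \(K\) in reverse time \(s\), with \(du/ds=-1\), the \(\dot\bbeta\) and \(\bbeta^2\) terms should cancel, leaving \(\Tr(\bbeta V^{-1}\bbeta V^{-1})\). I would include this only as a remark, since the proposition already contains the cancellation.

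For the alignment claim, I would minimize \(F(U)=\|V^{-1/2}U\Lambda U^TV^{-1/2}\|_F^2=\Tr(U\Lambda U^TV^{-1}U\Lambda U^TV^{-1})\) over \(U\in O(2)\). Diagonalize \(V^{-1}=Q\,\mathrm{diag}(a_1,a_2)Q^T\) with \(a_1\le a_2\), so that \(a_1\) corresponds to the larger variance. Set \(B=Q^TU\Lambda U^TQ\) with entries \(b_{ij}\). Then
\[
F=a_1^2b_{11}^2+a_2^2b_{22}^2+2a_1a_2b_{12}^2 .
\]
The spectrum fixes \(b_{11}+b_{22}=\lambda_1+\lambda_2\) and \(b_{11}b_{22}-b_{12}^2=\lambda_1\lambda_2\). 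Eliminating \(b_{12}^2\) gives
\[
F=(a_1b_{11}+a_2b_{22})^2-2a_1a_2\lambda_1\lambda_2 .
\]
So it suffices to minimize the linear functional \(a_1b_{11}+a_2b_{22}\), which is positive, over the diagonal \((b_{11},b_{22})\) of a matrix with the given spectrum. By Schur–Horn in dimension two, this diagonal ranges over the segment between \((\lambda_1,\lambda_2)\) and \((\lambda_2,\lambda_1)\). A linear functional on a segment attains its minimum at an endpoint, that is, at a diagonal \(B\) (\(b_{12}=0\)), so the minimizer shares eigenvectors with \(V_u\).

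By the rearrangement inequality, the minimizing endpoint puts the larger \(\lambda\) with the smaller \(a_i\), which is the larger variance \(1/a_i\). This proves the pairing statement. The only delicate point is the reduction to a linear functional over the Schur–Horn segment; the rest is algebra. If \(a_1=a_2\) or \(\lambda_1=\lambda_2\), \(F\) is constant on the segment, and the claim holds with a non-unique minimizer.
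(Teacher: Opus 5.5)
Your proof is correct and follows the paper's route. The growth formula comes from \(g_{\mathrm{sc}}(s)=\widetilde J_{\bbeta(u)}(u)\) with the constant Hessian \(-V_u^{-1}\), and the alignment claim is the same two-dimensional computation in the eigenbasis of \(V_u\); your elimination of \(b_{12}^2\) through the determinant, giving \(F=(a_1b_{11}+a_2b_{22})^2-2a_1a_2\lambda_1\lambda_2\) with \((b_{11},b_{22})\) affine in \(\sin^2\theta\) along the segment between \((\lambda_1,\lambda_2)\) and \((\lambda_2,\lambda_1)\), is a completed-square form of the paper's ``quadratic in \(\sin^2\theta\)'' expansion and makes its minimization at \(\theta=0\) explicit.
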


The proof is given in Appendix~\ref{app:adaptive-grid-proof}.

\paragraph{Intrinsic alignment.}
For posterior-mean freezing, schedule directions enter the cumulative bound
through the anisotropic entropy \(\mathfrak C_{\bbeta}(u)\) in
\eqref{eq:ld-anisotropic-entropy}. The next result covers a fixed linear
subspace.

\begin{prop}[Intrinsic subspace alignment]
\label{prop:intrinsic-schedule-alignment}
Let \(Q=(q_1,\ldots,q_d)\in O(d)\) and suppose
\[
\cX\subset x_\star+\mathcal U,
\qquad
\mathcal U:=\operatorname{span}\{q_1,\ldots,q_k\}.
\]
Let \(b_1,\ldots,b_d\) be positive rate curves whose cumulative rates
\(\Lambda_j(u):=\int_0^u b_j(v)\,dv\) satisfy
\(\Lambda_1(u)\geq\cdots\geq\Lambda_d(u)\) for every \(u>0\). For a
permutation \(\sigma\), consider the commuting fixed-spectrum schedule
\begin{equation}\label{eq:intrinsic-permuted-schedule}
\bbeta_\sigma(u)
:=Q\operatorname{diag}\{b_{\sigma(1)}(u),\ldots,b_{\sigma(d)}(u)\}Q^T.
\end{equation}
Assume these schedules satisfy Assumption~\ref{assu:baseline} with common
constants. For every \(\sigma\), there exists a permutation
\(\sigma^\uparrow\) that assigns the \(k\) largest cumulative rates to
\(\mathcal U\), namely
\(\{\sigma^\uparrow(1),\ldots,\sigma^\uparrow(k)\}=\{1,\ldots,k\}\), and
satisfies, for every \(u>0\) and \(x,y\in\cX\),
\begin{equation}\label{eq:intrinsic-alignment-metric}
d_{\bbeta_{\sigma^\uparrow},u}(x,y)
\leq d_{\bbeta_\sigma,u}(x,y).
\end{equation}
Consequently,
\begin{equation}\label{eq:intrinsic-alignment-complexity}
\mathfrak C_{\bbeta_{\sigma^\uparrow}}(u)
\leq\mathfrak C_{\bbeta_\sigma}(u),
\end{equation}
and the schedule-dependent cumulative upper bound in
\eqref{eq:ld-main-disc-anisotropic} cannot increase on any common grid
satisfying Assumption~\ref{assu:relative-grid}.
\end{prop}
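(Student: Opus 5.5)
The plan is to use commutativity to diagonalize every schedule matrix in the fixed basis \(Q\), and so reduce the metric comparison \eqref{eq:intrinsic-alignment-metric} to a coordinatewise comparison of cumulative rates on the first \(k\) coordinates.

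\textbf{Diagonalization.} Since all \(\bbeta_\sigma(u)\) are diagonal in the basis \(Q\), they commute. The commuting formula for \(\Phi\) then gives
\[
A_u=Q\operatorname{diag}\{e^{-\Lambda_{\sigma(j)}(u)/2}\}Q^T,
\qquad
\Gamma_u=Q\operatorname{diag}\{1-e^{-\Lambda_{\sigma(j)}(u)}\}Q^T.
\]
Hence
\[
S_u=Q\operatorname{diag}\{\phi(\Lambda_{\sigma(j)}(u))\}Q^T,
\qquad
\phi(\lambda):=\frac1{e^{\lambda}-1},
\]
and \(\phi\) is strictly decreasing on \((0,\infty)\). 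For \(x,y\in\cX\subset x_\star+\mathcal U\) we have \(x-y\in\mathcal U\), so
\[
d_{\bbeta_\sigma,u}(x,y)^2=\sum_{j=1}^k\phi(\Lambda_{\sigma(j)}(u))\,\langle q_j,x-y\rangle^2 .
\]
This identity holds for all points of \(x_\star+\mathcal U\), not only for points of \(\cX\).

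\textbf{Construction of \(\sigma^\uparrow\).} Let \(s_1<\cdots<s_k\) be the sorted elements of \(\sigma(\{1,\ldots,k\})\). Define \(\sigma^\uparrow(j):=i\) when \(\sigma(j)=s_i\), for \(j\leq k\), and map \(\{k+1,\ldots,d\}\) bijectively onto \(\{k+1,\ldots,d\}\) in any way. Since \(s_i\geq i\), this gives \(\sigma^\uparrow(j)\leq\sigma(j)\) for every \(j\leq k\). Because the ordering \(\Lambda_1(u)\geq\cdots\geq\Lambda_d(u)\) holds at every \(u\), it follows that \(\Lambda_{\sigma^\uparrow(j)}(u)\geq\Lambda_{\sigma(j)}(u)\), and therefore \(\phi(\Lambda_{\sigma^\uparrow(j)}(u))\leq\phi(\Lambda_{\sigma(j)}(u))\), for all \(u>0\) and \(j\leq k\). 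Summing termwise gives \eqref{eq:intrinsic-alignment-metric}, in fact on all of \(x_\star+\mathcal U\).

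\textbf{Entropy comparison (main obstacle).} The delicate point is deducing \eqref{eq:intrinsic-alignment-complexity} when covering balls may be centered anywhere in \(\R^d\), where the pointwise comparison has not been established. I would first reduce to centers in \(x_\star+\mathcal U\). For either schedule, \(S_u\) is diagonal in the \(Q\)-basis, so the \(S_u\)-norm splits into a \(\mathcal U\)-part and a \(\mathcal U^\perp\)-part. Replacing a center \(c\) by \(x_\star+P_{\mathcal U}(c-x_\star)\) drops the \(\mathcal U^\perp\)-part, so it does not increase its \(d_{\bbeta_\sigma,u}\)-distance to any point of \(\cX\). Thus an optimal \(r\)-cover for \(d_{\bbeta_\sigma,u}\) may be taken with centers in \(x_\star+\mathcal U\). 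By the extended inequality, the same centers form an \(r\)-cover for \(d_{\bbeta_{\sigma^\uparrow},u}\). Hence \(\mathsf H_{\bbeta_{\sigma^\uparrow},u}(r)\leq\mathsf H_{\bbeta_\sigma,u}(r)\) for every \(r\), and taking the infimum over \(r\) yields \eqref{eq:intrinsic-alignment-complexity}.

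\textbf{Cumulative bound.} Both schedules satisfy Assumption~\ref{assu:baseline} with common constants. The constant \(C\) in \eqref{eq:ld-main-disc-anisotropic} depends only on \(m_\bbeta,B_\bbeta\) (through Proposition~\ref{prop:ld-cumulative-reduction}) and on universal quantities, so it is the same for both schedules. On any common grid satisfying Assumption~\ref{assu:relative-grid}, the weight \(a(u)\) and \(\kappa\) are also common. Since the integrand \(\mathfrak C_{\bbeta}(u)\) decreases pointwise in \(u\), the right-hand side of \eqref{eq:ld-main-disc-anisotropic} cannot increase.
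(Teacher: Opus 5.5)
Your proof is correct and follows the paper's route. You diagonalize $S_u^\sigma$ in the basis $Q$, compare the $k$ coefficients on $\mathcal U$ using the ordering of the $\Lambda_j$ and the monotonicity of $z\mapsto(e^z-1)^{-1}$, pass to covering numbers, and insert the result into \eqref{eq:ld-main-disc-anisotropic}. The constants there depend only on the common schedule bounds.

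Your one-shot order-preserving relabeling gives $\sigma^\uparrow(j)\le\sigma(j)$ for $j\le k$ directly. The paper instead uses iterated pairwise swaps between a coordinate in $\mathcal U$ and one in $\mathcal U^\perp$. This difference is only cosmetic.

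The more substantive addition is your projection of covering centers onto $x_\star+\mathcal U$. The paper simply asserts that ``a pointwise smaller metric has no larger covering number.'' However, covering balls may be centered anywhere in $\R^d$, and the aligned schedule can put a larger $S_u$-weight on $\mathcal U^\perp$. So a comparison established only for differences in $\mathcal U$ does not by itself justify that step. You observe that $S_u$ is block diagonal with respect to $\mathcal U\oplus\mathcal U^\perp$, so projecting a center can only decrease its distance to points of $\cX$. That observation is exactly what closes the step.
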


The proof is given in Appendix~\ref{app:adaptive-grid-proof}. It shows that the
largest cumulative rates should be assigned to the data subspace. Unlike the
score criterion, this proposition compares the intrinsic-dimensional upper
bounds rather than the exact local error, and it does not claim optimality for
general manifolds or noncommuting schedules.

\paragraph{Pilot rule and overall objective.}
The score oracle in \eqref{eq:local-schedule-oracle} is circular because
\(q_u\) depends on the preceding schedule. We therefore generate a reference
forward path \(q_u^{\mathrm{ref}}\), for example under isotropic noising, and
set

\begin{equation}\label{eq:pilot-schedule}
B_{\mathrm{pil}}(u)
\in\operatorname*{argmin}_{B\in\mathcal A_u}
\widetilde J_B^{\mathrm{ref}}(u).
\end{equation}
Here the superscript identifies the marginal used in the expectation. For a
block rotation, the objective can be evaluated analytically or estimated from
score-Hessian evaluations on forward-corrupted pilot samples; the selected
directions are then smoothly interpolated in time. This is the rule used in
Section~6.

For a schedule \(\bbeta\), define its score growth length
\begin{equation}\label{eq:score-growth-length}
L_{\mathrm{sc}}(\bbeta)
:=\int_{t_{\min}}^T
\sqrt{\widetilde J_{\bbeta(u)}^{\bbeta}(u)}\,du,
\end{equation}
where the superscript means that the marginal is generated by \(\bbeta\).
After optimizing the grid, the leading discretization cost is
\(L_{\mathrm{sc}}(\bbeta)^2/(2N)\). This suggests the forward design objective
\begin{equation}\label{eq:schedule-design-objective}
\mathcal J_N(\bbeta)
:=\KL(q_T^{\bbeta}\|\pi)
+\frac1{2N}
\left\{
\int_{t_{\min}}^T
\sqrt{\widetilde J_{\bbeta(u)}^{\bbeta}(u)}\,du
\right\}^2.
\end{equation}
The two terms balance terminal Gaussianization and discretization. This is a
leading-order design objective, not a global optimality theorem; the pilot rule
is a tractable surrogate for its second term. A learned predictor also adds the
schedule-dependent approximation budget from Section~2. For a pretrained model
whose schedule cannot be changed, only the grid-design step below applies.

\subsection{Grid design for a fixed schedule}

We now hold \(\bbeta\) fixed and allocate \(N\) reverse steps. For a fine grid
with step lengths \(\Delta_i\), \eqref{eq:adaptive-cost-growth-equivalence}
and Cauchy--Schwarz give
\[
\frac12\sum_{i=0}^{N-1}g_\star(t_i)\Delta_i^2
\geq\frac1{2N}
\left\{\sum_{i=0}^{N-1}\sqrt{g_\star(t_i)}\,\Delta_i\right\}^2.
\]
Thus the leading interval costs are equalized by making
\(\Delta_i\sqrt{g_\star(t_i)}\) nearly constant.

\begin{defi}[Adaptive grid from local error]
\label{def:adaptive-grid}
Choose \(\star\in\{\mathrm{sc},\mathrm{pm}\}\). The adaptive grid
\(0=t_0<\cdots<t_N=T-t_{\min}\) satisfies
\begin{equation}\label{eq:adaptive-square-root-grid}
\int_{t_i}^{t_{i+1}}\sqrt{g_\star(r)}\,dr
=\frac{L_\star}{N},
\qquad i=0,\ldots,N-1,
\end{equation}
where \(L_\star:=\int_0^{t_N}\sqrt{g_\star(r)}\,dr\). Equivalently, the
interior endpoints are quantiles of \(\sqrt{g_\star(r)}\,dr\), chosen by its
generalized inverse on flat regions.
\end{defi}

For score freezing, Proposition~\ref{prop:forward-score-curvature} gives
the score growth length in \eqref{eq:score-growth-length}, which is the
quantity used in \eqref{eq:schedule-design-objective}. For posterior-mean freezing,
\eqref{eq:adaptive-growth-formulas} instead uses increments of the
unconditional clean-prediction energy \(\Rbar_u\).

The nonnegative rate \(g_\star\) may vanish on locally flat regions. For a
strictly increasing numerical cumulative function, one may use
\(g_{\star,\rho}=g_\star+\rho\), \(\rho>0\). Equivalently,
\(\mathcal C_{\star,\rho}[s,b]
=\mathcal C_\star[s,b]+\rho(b-s)^2/2\). The leading constant converges to the
unregularized value as \(\rho\downarrow0\).

\begin{prop}[Asymptotic optimality of the adaptive grid]
\label{prop:adaptive-asymptotic}
Suppose that \(g_\star\) is piecewise continuous and
\(g_\star(s)\geq g_0>0\). For a regularized grid, interpret
\((\mathcal C_\star,g_\star)\) below as
\((\mathcal C_{\star,\rho},g_{\star,\rho})\). Then
\begin{equation}\label{eq:adaptive-grid-optimal-value}
\sum_{i=0}^{N-1}\mathcal C_\star[t_i,t_{i+1}]
=\frac{L_\star^2}{2N}+o(N^{-1}).
\end{equation}
Moreover, for every grid sequence
\(0=t_0'<\cdots<t_N'=t_N\) with mesh \(O(N^{-1})\),
\begin{equation}\label{eq:adaptive-grid-lower-bound}
\liminf_{N\to\infty}
N\sum_{i=0}^{N-1}\mathcal C_\star[t_i',t_{i+1}']
\geq\frac12L_\star^2.
\end{equation}
\end{prop}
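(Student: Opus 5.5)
The argument rests on the local expansion \eqref{eq:adaptive-cost-growth-equivalence}, but I need it uniformly over short intervals, not just at a fixed left endpoint. Write the quadratic-cost remainder as
\[
\mathcal C_\star[s,s+\Delta]=\tfrac12 g_\star(s)\Delta^2+\Delta^2 r(s,\Delta).
\]
The first step is to show that \(\sup_s|r(s,\Delta)|\to0\) as \(\Delta\to0\), with \(s\) ranging over each closed continuity piece of \(g_\star\) (intervals that straddle a breakpoint are treated separately).

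For this I would use \(\mathcal B_\star(s,s)=0\) and write \(\mathcal B_\star(s,t)=\int_s^t\partial_\tau\mathcal B_\star(s,\tau)\,d\tau\). By \eqref{eq:adaptive-score-costs} and \eqref{eq:adaptive-pm-costs}, \(\partial_t\mathcal B_\star(s,t)\) is a continuous function of \((s,t)\) on each smooth strip, and it equals \(g_\star(s)\) on the diagonal. Uniform continuity on compact strips, which is available because forward times stay in \([t_{\min},T]\) and are bounded away from \(0\), then gives the uniform remainder.

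The breakpoints are finitely many. Each one lies in at most one grid interval, and that interval costs \(O(N^{-2})\). So their total contribution is \(o(N^{-1})\). In the regularized case the same holds with \(g_{\star,\rho}=g_\star+\rho\).

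\textbf{Upper value.} The mass of \(\sqrt{g_\star}\) over each cell of the adaptive grid is \(L_\star/N\), and \(g_\star\ge g_0\). Hence every step satisfies \(\Delta_i\le L_\star/(N\sqrt{g_0})\), so the mesh is \(O(N^{-1})\).

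Piecewise continuity then gives
\[
\sqrt{g_\star(t_i)}\,\Delta_i=\frac{L_\star}{N}\bigl(1+o(1)\bigr)
\]
uniformly in \(i\), except for the \(O(1)\) cells that touch a breakpoint. Substituting into the uniform expansion,
\[
\sum_i\mathcal C_\star[t_i,t_{i+1}]=\frac12\sum_i\Bigl(\frac{L_\star}{N}\Bigr)^2\bigl(1+o(1)\bigr)+o(N^{-1})=\frac{L_\star^2}{2N}+o(N^{-1}),
\]
which is \eqref{eq:adaptive-grid-optimal-value}.

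\textbf{Lower bound.} Take any grid sequence with mesh \(O(N^{-1})\). The uniform expansion gives
\[
\sum_i\mathcal C_\star[t_i',t_{i+1}']=\frac12\sum_i g_\star(t_i')\Delta_i'^2+o(N^{-1}),
\]
using \(\sum_i\Delta_i'^2\le(\max_i\Delta_i')\,t_N=O(N^{-1})\).

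Cauchy--Schwarz, as displayed before Definition~\ref{def:adaptive-grid}, bounds the main term below by \(\frac{1}{2N}\bigl(\sum_i\sqrt{g_\star(t_i')}\Delta_i'\bigr)^2\). The sum inside is a Riemann sum for \(\int_0^{t_N}\sqrt{g_\star}\) with vanishing mesh, and the integrand is piecewise continuous and bounded, so it converges to \(L_\star\). Multiplying by \(N\) and taking \(\liminf\) gives \eqref{eq:adaptive-grid-lower-bound}.

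\textbf{Main obstacle.} The delicate step is the uniform remainder. Continuity of \(\partial_t\mathcal B_\star(s,t)\) jointly in \((s,t)\) up to the diagonal has to be justified from the explicit formulas together with the regularity of \(K\), \(\mathcal H_s\) and \(\Rbar_u\) at positive forward times. Smoothness of \(q_u\) for \(u\ge t_{\min}\) supplies this on each smooth interval of \(\bbeta\). If only one-sided derivatives are available, I would instead work directly with \(\mathcal B_\star(s,t)/(t-s)\to g_\star(s)\) and appeal to equicontinuity.
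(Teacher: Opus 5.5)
Your proposal is correct and follows essentially the same route as the paper's proof. The shared steps are: the local expansion $\mathcal C_\star[s,s+\Delta]=\frac12 g_\star(s)\Delta^2+o(\Delta^2)$, uniform on each smooth piece; mesh $O(N^{-1})$ from $g_\star\ge g_0$; breakpoint cells costing $O(N^{-2})$; and Cauchy--Schwarz together with Riemann-sum convergence for the lower bound. The differences are cosmetic: you obtain $\sqrt{g_\star(t_i)}\,\Delta_i=(L_\star/N)(1+o(1))$ from uniform continuity rather than the integral mean-value theorem, and you sketch why the local remainder is uniform, via joint continuity of $\partial_t\mathcal B_\star$, where the paper simply asserts this.
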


\begin{prop}[Dimension dependence]
\label{prop:adaptive-dimension}
Suppose the unregularized cost satisfies the regularity conditions of
Proposition~\ref{prop:adaptive-asymptotic}. Under the assumptions of Section~3,
\[
L_{\mathrm{sc}}^2\leq
Cd\left(T+\log\frac1{t_{\min}}\right)^2.
\]
Under the assumptions of Section~4,
\[
L_{\mathrm{pm}}^2\leq
Ck\log k\left(T+\log\frac1{t_{\min}}\right)^2.
\]
\end{prop}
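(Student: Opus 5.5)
Both bounds rest on the same two facts. First, Cauchy--Schwarz on \([0,t_N]\) gives
\[
L_\star^2=\Bigl(\int_0^{t_N}\sqrt{g_\star(r)}\,dr\Bigr)^2
\leq\int_0^{t_N}\frac{dr}{w(r)}\cdot\int_0^{t_N}w(r)\,g_\star(r)\,dr
\]
for any positive weight \(w\). Second, \(g_\star\) is the infinitesimal growth of a cost whose cumulative value the earlier sections already control on relative grids. We take \(w(r)=a(T-r)=\min\{1,T-r\}\), the weight dictated by Assumption~\ref{assu:relative-grid}. Then
\[
\int_0^{t_N}\frac{dr}{a(T-r)}=\int_{t_{\min}}^T\frac{du}{a(u)}\leq T+\log\frac1{t_{\min}}=L.
\]
It therefore suffices to show \(\int_{t_{\min}}^T a(u)\,g_\star(T-u)\,du\leq C\,d\,L\) in the score branch and \(\leq C\,k\log k\,L\) in the posterior-mean branch.

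The cleanest route to the weighted integral is a limiting argument. Take the hybrid grid with \(\kappa=C\Delta_N\to0\), which is rate-regular. By \eqref{eq:adaptive-cost-growth-equivalence}, the piecewise continuity of \(g_\star\), and the mesh condition \(\eta_i\asymp\kappa\,a(u_{i+1})\), we get
\[
\sum_i\mathcal C_\star[t_i,t_{i+1}]=\tfrac12\sum_i g_\star(t_i)\eta_i^2\,(1+o(1))
\asymp \kappa\int a(u)\,g_\star\,du\,(1+o(1)).
\]
For the upper bound we only need the direction \(\eta_i\geq c\kappa\,a(u_{i+1})\), which holds for the hybrid grid up to a universal constant. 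The left-hand side is exactly the discretization sum. In the score branch it is bounded by Theorem~\ref{theo:cumulative-E-relative-grid} as \(Cd\kappa(1+T+\log(1/t_{\min}))\). In the posterior-mean branch, since \(\mathcal B_{\mathrm{pm}}=\tfrac12E^{\mathrm{pm}}\), it is bounded by Theorem~\ref{theo:ld-cumulative} as \(C\kappa k\log k\,L\), assuming the resolution compatibility \eqref{eq:ld-resolution-compatibility}. Dividing by \(\kappa\) and letting \(N\to\infty\) gives the weighted integral bounds. Multiplying by \(L\) then gives the claims, using \(1+L\leq 2L\) since \(T\geq1\).

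As a cross-check, and to avoid subtleties with the limit, the posterior-mean branch can be done directly. We have \(g_{\mathrm{pm}}=\tfrac12\Tr\{W_u\,d\Pbar_u/du\}\), and \(W_u\preceq C a(u)^{-1}S_u\) by Proposition~\ref{prop:ld-cumulative-reduction}. Hence
\[
\int a\,g_{\mathrm{pm}}\,du\leq C\int\Tr(S_u\,d\Pbar_u),
\]
which the proof of Proposition~\ref{prop:ld-cumulative-reduction}, together with Lemma~\ref{lem:ld-channel}, already bounds by \(Ck\log k\,L\). For the score branch, Proposition~\ref{prop:forward-score-curvature} gives \(g_{\mathrm{sc}}=\widetilde J_\bbeta\geq0\). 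Alternatively, one can write \(g_{\mathrm{sc}}=K'+\Tr[(\dot\bbeta+\bbeta^2)\mathcal H]\), using \(K=I_\bbeta+G\) with \(G\) Lipschitz of size \(Cd\), and bound \(\Tr(\bbeta^2\mathcal H)\leq CK\leq Cd(1+1/u)\) by Lemma~\ref{lem:cumulative-technical-bounds}. The term \(-dI_\bbeta\) then integrates against \(a(u)\) exactly as the quantity \(J\) in the proof of Theorem~\ref{theo:cumulative-E-relative-grid}, giving \(Cd(1+\log(1/t_{\min}))\). The remaining terms contribute \(\int a(u)\,Cd(1+1/u)\,du\leq CdL\).

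The main obstacle is the score branch's use of \(\dot\bbeta\). Under Assumption~\ref{assu:cumulative} we have \(\dot\bbeta\preceq0\), so \(\Tr(\dot\bbeta\mathcal H)\leq0\) can simply be dropped in the upper bound. However, \(K'(s)\) in reverse time equals \(-\frac{d}{du}I_\bbeta(u)-G'(u)\), and I must track this sign carefully. It is a positive measure \(\mu_I\) plus a Lipschitz remainder, which is what makes the weighted integral finite. Schedule breakpoints contribute only one-sided derivatives on a finite set and do not affect the integrals.
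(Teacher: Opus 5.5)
Your proof is correct, but it is organized differently from the paper's. Both you and the paper reduce to weighted Cauchy--Schwarz with weight $a(u)=\min\{1,u\}$; the differences lie in how $\int a\,g_\star$, or $L_\star^2$ directly, is controlled.

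\textbf{Score branch.} The paper uses exactly your ``alternative'' argument. It bounds $g_{\mathrm{sc}}(T-u)\le-\bar K'(u)+B_\bbeta\bar K(u)$ with $\bar K(u):=K(T-u)$, and integrates $\bar K$ by parts against $a(u)$, dropping $\bar K(T)\ge0$. You instead split $K=I_\bbeta+G$ and reuse the quantity $J$, which is a cosmetic difference. The paper never inverts Theorem~\ref{theo:cumulative-E-relative-grid} as your main route does.

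\textbf{Posterior-mean branch.} Here the paper does invert the cumulative bound, but more economically than your main route. It applies the lower bound of Proposition~\ref{prop:adaptive-asymptotic} to any rate-regular grid. Then $\frac12L_{\mathrm{pm}}^2\le\liminf_{N\to\infty}N\operatorname{Disc}_{\mathrm{pm}}(\mathcal T_N)\le Ck\log k\,L^2$ follows from Theorem~\ref{theo:ld-cumulative} with $\kappa\le CL/N$. The relative mesh supplies the weight implicitly, so only a mesh of order $N^{-1}$ is needed. Both limit arguments rest on the locally uniform form of \eqref{eq:adaptive-cost-growth-equivalence}. Your explicit-weight version additionally needs the comparison $\eta_i\ge c\kappa\,a(u_{i+1})$, which holds for the hybrid grid, as you check, but not for an arbitrary rate-regular grid.

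\textbf{Your direct cross-check.} This is the genuinely new route. It combines $\Pbar_u'\succeq0$, $W_u\preceq Ca(u)^{-1}S_u$, and the Stieltjes estimate already proved in Proposition~\ref{prop:ld-cumulative-reduction} and Lemma~\ref{lem:ld-channel}. It is nonasymptotic and needs no reference grid or local expansion. It also shows that the single forward functional $\int\Tr(S_u\,d\Pbar_u)$ controls both the cumulative error and the growth length.

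\textbf{Trade-off.} The limit-based arguments, yours and the paper's, are black-box: they turn any relative-grid cumulative bound into a growth-length bound without reopening its proof. The direct computations avoid any appeal to asymptotic regularity of $g_\star$ beyond its definition.
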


The proofs are given in Appendix~\ref{app:adaptive-grid-proof}. Once
\(g_\star\) is available, the grid is computed from
\begin{equation}\label{eq:adaptive-cumulative-computation}
F_\star(t):=\int_0^t\sqrt{g_\star(r)}\,dr,
\qquad
t_i=F_\star^{-1}\!\left(\frac{i}{N}F_\star(t_N)\right).
\end{equation}
In tractable simulations, \(g_\star\) is evaluated from the known forward
marginals, followed by one-dimensional quadrature and inversion of
\(F_\star\). This is the procedure used for the Gaussian mixture in Section~6.

For a trained diffusion model, the exact growth is generally unavailable. One
can instead forward-noise a small pilot batch on a lattice
\(0=r_0<\cdots<r_M=t_N\). For the common scalar schedule
\(\bbeta(u)=b(u)I\), it is more stable to estimate the cell integral of
\(g_{\mathrm{sc}}\) than to differentiate a noisy estimate of \(K\). With
\(\Delta_\ell=r_{\ell+1}-r_\ell\), set
\begin{equation}\label{eq:pretrained-growth-estimator}
\begin{aligned}
\widehat K(r_\ell)
&:=\frac1{B_{\mathrm{pil}}}\sum_{j=1}^{B_{\mathrm{pil}}}
\left\|\widehat h_{u_\ell}(X_{u_\ell}^{(j)})\right\|_{\bbeta(u_\ell)}^2,
\qquad u_\ell=T-r_\ell,\\
\widehat G_\ell
&:=\widehat K(r_{\ell+1})-\widehat K(r_\ell)
+\int_{r_\ell}^{r_{\ell+1}}
\lambda_{\mathrm{sch}}(r)\widehat K(r)\,dr,\\
\widehat g_\ell
&:=\max\left\{\frac{\widehat G_\ell}{\Delta_\ell},\rho\right\},
\qquad
\lambda_{\mathrm{sch}}(r)
:=\frac{\dot b(T-r)}{b(T-r)}+b(T-r).
\end{aligned}
\end{equation}
Here \(B_{\mathrm{pil}}\) is the pilot batch size and \(\rho>0\) is a small
floor. Treating \(\widehat g_\ell\) as constant on each pilot cell gives a
numerical approximation to \(F_\star\). For posterior-mean freezing, one
instead estimates \(\Rbar_u\) from clean-data predictions and approximates
\(-\Rbar_u'\) by pilot-cell differences in
\eqref{eq:adaptive-growth-formulas}. Both constructions use only forward
corruptions and require no retraining.

\section{Experiments}\label{sec:numerical-examples}

We evaluate the schedule and grid rules of Section~5 on a high-dimensional
Gaussian mixture. This controlled experiment is designed to isolate the
discretization term: its anisotropy changes with the forward noise level, so it
distinguishes fixed and rotating matrix schedules, while its product structure
permits accurate evaluation of the local error. The experiment is not intended
as a substitute for image-scale validation, where estimating dense directional
criteria and retraining under a changed schedule introduce additional costs.

For a grid \(\mathcal T_N\), let
\[
\mathcal E_n:=\mathcal C_{\mathrm{sc}}[t_{n-1},t_n],
\qquad
\mathrm{Disc}_N:=\sum_{n=1}^N\mathcal E_n.
\]
Thus \(\mathrm{Disc}_N\) is the exact-score discretization term in
Theorem~\ref{theo:kl-decomposition}. We report it directly to isolate schedule
and grid effects; terminal initialization is negligible in this example.

Let \(T=10\), \(t_{\min}=0.02\), and \(q_0=\nu_0^{\otimes50}\), so that
\(d=100\), where each two-dimensional factor is
\begin{equation}\label{eq:rotating-schedule-mixture}
\nu_0=\frac14\sum_{z\in\{-1,1\}^2}
\mathcal N\left(R_{75^\circ}
\begin{pmatrix}2&0\\0&8\end{pmatrix}z,
\begin{pmatrix}0.05&0\\0&5\end{pmatrix}\right).
\end{equation}
The within-component covariance is diagonal in the original basis, whereas
the component means vary along directions rotated by \(75^\circ\). These two
geometries dominate at different forward-noise levels. The leading direction
of the marginal covariance corresponds to a rate-axis angle
\(\theta_{\rm mar}=73.73^\circ\).

For a two-dimensional block, put
\[
B_\theta=R_\theta
\begin{pmatrix}r_-&0\\0&r_+\end{pmatrix}R_\theta^T,
\qquad
\frac{r_++r_-}{2}=0.85,
\qquad
\frac{r_+}{r_-}=1.3,
\]
and set \(\bbeta_\theta=I_{50}\otimes B_\theta\). We compare two fixed
schedules, with \(\theta=0\) and \(\theta=\theta_{\rm mar}\), and two rotating
schedules. To locate the change of direction from forward-marginal curvature, let
\(q_u^{\rm iso}\) be the isotropically noised marginal and consider
\begin{equation}\label{eq:forward-curvature-direction}
\theta_{\rm pil}(u)
\in\operatorname*{argmin}_{0\leq\theta\leq\theta_{\rm mar}}
\E_{q_u^{\rm iso}}
\left\|B_\theta^{1/2}
\nabla^2\log q_u^{\rm iso}(X_u)
B_\theta^{1/2}\right\|_F^2.
\end{equation}
This specializes the fixed-spectrum pilot rule \eqref{eq:pilot-schedule} to a
one-angle block family. Its minimizer changes from the within-component
direction to the marginal direction near \(u=4.4\). We therefore use
\[
\theta_{\rm fwd}(u)=\theta_{\rm mar}
\frac{\ell(u)-\ell(0)}{\ell(T)-\ell(0)},
\qquad
\ell(u)=\{1+e^{-(u-4.4)/0.3}\}^{-1},
\qquad
\theta_{\rm rev}(u)=\theta_{\rm mar}-\theta_{\rm fwd}(u).
\]
All four schedules have the same trace and instantaneous spectrum. The
rotating schedules are noncommutative and differ only in the order in which
they follow the two geometries. Their comparison therefore isolates
directional design rather than the total amount of noise.

For each schedule, we compare uniform, hybrid, and adaptive grids at
\(N\in\{16,32,64,96\}\). The adaptive grid uses the square-root quantiles of
\(g_{\mathrm{sc}}\), evaluated on a 240-cell pilot grid. By the product
structure, all forward expectations and interval errors are sums of 50
two-dimensional terms. We evaluate each term by 28-point Gauss--Hermite
quadrature in each coordinate; increasing the order to 44 leaves the reported
values unchanged.

\clearpage
\begin{figure}[!t]
\centering
\includegraphics[width=\textwidth]{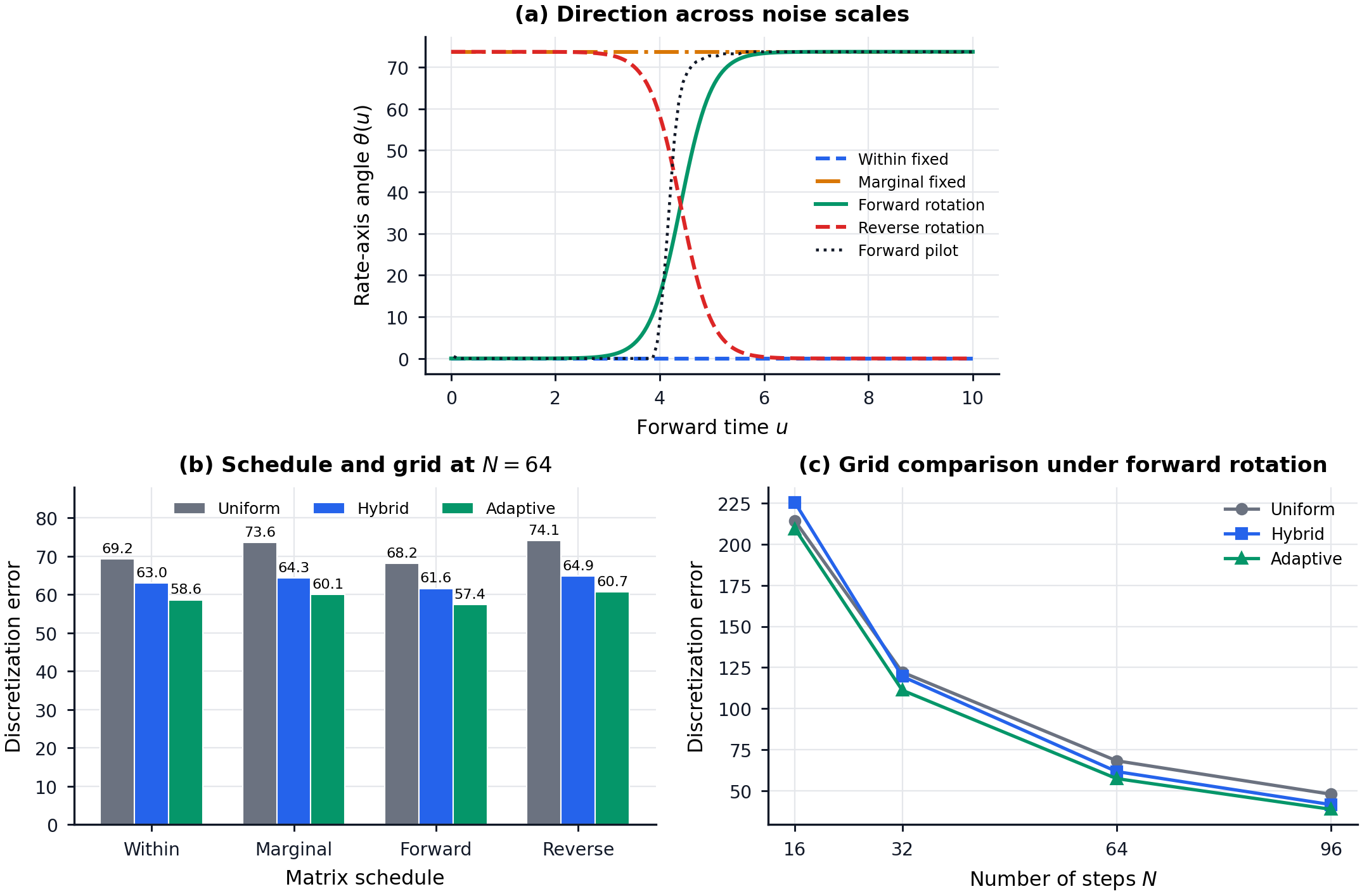}
\caption{Matrix schedule and grid comparison on the product mixture
\eqref{eq:rotating-schedule-mixture}. Panel (a) shows the four schedule
directions and the forward-pilot direction from
\eqref{eq:forward-curvature-direction}. Panel (b) compares every schedule-grid
pair at \(N=64\). Panel (c) compares the three grids for forward rotation, and
shows how their errors change with the number of steps.}
\label{fig:matrix-schedule-grid-comparison}
\end{figure}

Figure~\ref{fig:matrix-schedule-grid-comparison} gives two consistent findings.
\begin{itemize}
\item \textbf{Schedule direction.} The pilot direction in panel (a) moves from
the within-component geometry to the marginal geometry. Forward rotation
follows this order, whereas reverse rotation traverses it backward. On the
adaptive grids at \(N=64\), the errors are \(58.65\), \(60.08\), \(57.36\),
and \(60.66\) for the within-fixed, marginal-fixed, forward-rotation, and
reverse-rotation schedules. Forward rotation is best in this comparison. Thus
rotation alone is insufficient; its ordering must track the scale-dependent
forward geometry.
\item \textbf{Grid and step count.} At \(N=64\), the adaptive grid has the
smallest error for every schedule. Under forward rotation, the uniform,
hybrid, and adaptive errors are \(68.20\), \(61.64\),
and \(57.36\), so adaptation improves on the two baselines by \(15.9\%\) and
\(6.9\%\). Panel (c) shows that this ordering persists from \(N=16\) to
\(N=96\), where the corresponding errors are \(47.84\), \(41.52\), and
\(38.66\).
\end{itemize}
Across all schedules, the terminal KL lies between
\(5.0\times10^{-4}\) and \(1.7\times10^{-2}\), confirming that these
comparisons are governed by discretization.

\section{Conclusion}

We developed a unified forward-evolution framework for reverse-SDE
discretization with matrix-valued schedules. For both score and posterior-mean
freezing, the argument identifies the local prediction-freezing error,
accumulates it by Stieltjes integration by parts on the forward time axis, and
then inserts the cumulative estimate into a common KL decomposition. The local
objects differ across the two schemes. Weighted score energy and
Fisher-information dissipation give the ambient-dimensional result, while
clean-prediction covariance increments and metric entropy give the
intrinsic-dimensional result. Thus pointwise equivalence of the score and
posterior mean does not make their frozen numerical schemes equivalent.

The same local objects connect error analysis with design. Weighted
score-Hessian energy gives a directional criterion for a matrix schedule, and
the posterior-mean bound gives a subspace-alignment rule. Once the schedule is
fixed, square-root equidistribution of the local growth produces an
asymptotically optimal grid for the leading discretization cost. The
Gaussian-mixture experiment illustrates both effects: forward-aligned rotation
improves the schedule comparison at the reference step count, and adaptive
grids reduce discretization across several step counts.

The results match rather than improve the best known worst-case dimension
orders. Their contribution is the unified forward proof, its matrix-sensitive
local description, and the schedule and grid criteria obtained from that
description. The intrinsic guarantee applies to discretization; initialization
from a standard Gaussian can still depend on the ambient dimension at fixed
terminal time.

The present experiment is deliberately controlled. Dense matrix schedules and
score-curvature estimation may be expensive for high-resolution data, and a
changed forward schedule can require retraining the predictor. Practical
structured parameterizations, statistical guarantees for pilot estimates, and
image-scale validation are therefore important next steps. The broader idea of
analyzing reverse errors through a prescribed forward evolution may also extend
to more general stochastic interpolants.

\appendix

\section{Cumulative estimates}\label{app:cumulative-auxiliary}

This appendix collects the estimates used in the cumulative argument of Section~3. They are standard consequences of Gaussian smoothing, the finite second moment assumption, and boundedness of the schedule.

\begin{lemm}[Forward Gaussian smoothing]\label{lem:gaussian-smoothing-score}
Let
$$
X_u=\Phi(u,0)X_0+Z_u,
$$
where \(Z_u\) is centered Gaussian and independent of \(X_0\), with \(\operatorname{Cov}(Z_u)=\Gamma_u\). Then
$$
h_u(X_u)=\E_{Z_u\mid X_u}[-\Gamma_u^{-1}Z_u],
\qquad
\Gamma_u=I-\Phi(u,0)\Phi(u,0)^T.
$$
Moreover, if \(m_\bbeta I\preceq\bbeta(u)\) on \([0,u]\), then
$$
\Gamma_u\succeq(1-e^{-m_\bbeta u})I.
$$
\end{lemm}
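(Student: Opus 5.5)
The proof has two parts. The first is a Tweedie-type representation of the score through the Gaussian noise; the second is a spectral bound on \(\Gamma_u\) from the contraction of \(\Phi(u,0)\).

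\textbf{Score representation.} Write \(A=\Phi(u,0)\) and let \(\varphi_\Gamma\) be the \(N(0,\Gamma_u)\) density. First check that \(\Gamma_u\succ0\); this follows from the spectral bound proved below, since \(u>0\) and \(m_\bbeta>0\). Then
\[
q_u(x)=\E_{X_0}\bigl[\varphi_\Gamma(x-AX_0)\bigr].
\]
Differentiate under the expectation. This is justified because \(\nabla\varphi_\Gamma\) is bounded and integrable, so dominated convergence applies. Using \(\nabla\varphi_\Gamma(z)=-\Gamma_u^{-1}z\,\varphi_\Gamma(z)\), we get
\[
\nabla q_u(x)=\E_{X_0}\bigl[-\Gamma_u^{-1}(x-AX_0)\,\varphi_\Gamma(x-AX_0)\bigr].
\]
Divide by \(q_u(x)>0\). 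By Bayes' rule, the weights \(\varphi_\Gamma(x-AX_0)/q_u(x)\) are exactly the conditional law of \(X_0\) given \(X_u=x\). On that event \(Z_u=x-AX_0\), so
\[
h_u(x)=\E\bigl[-\Gamma_u^{-1}Z_u\,\big|\,X_u=x\bigr].
\]
The covariance identity \(\Gamma_u=I-AA^T\) comes from the forward channel \eqref{eq:forward-channel}. Equivalently, \(\Sigma(u)=\Cov(Z_u)\) solves the Lyapunov ODE \(\Sigma'=-\tfrac12\bbeta\Sigma-\tfrac12\Sigma\bbeta+\bbeta\) with \(\Sigma(0)=0\). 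One verifies that \(I-\Phi(u,0)\Phi(u,0)^T\) solves the same ODE with the same initial value, and uniqueness of the linear ODE gives equality.

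\textbf{Spectral bound.} This is the only step requiring care, because in the noncommuting case \(\Phi\) is not an exponential. For a fixed unit vector \(v\), set \(f(r)=\|\Phi(r,0)^Tv\|^2\). The transpose satisfies \(\partial_r\Phi(r,0)^T=-\tfrac12\Phi(r,0)^T\bbeta(r)\), which is the wrong ordering to control \(f\) directly. Instead use the norm identity \(\|\Phi^T\|_2=\|\Phi\|_2\), which is legitimate because we only need the operator norm.

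So take \(g(r)=\|\Phi(r,0)w\|^2\) for a unit vector \(w\). Then
\[
g'(r)=-\,(\Phi w)^T\bbeta(r)(\Phi w)\le -m_\bbeta\, g(r).
\]
Grönwall gives \(g(u)\le e^{-m_\bbeta u}\), hence \(\|\Phi(u,0)\|_2^2\le e^{-m_\bbeta u}\). Since \(\lambda_{\max}(AA^T)=\|A\|_2^2\), it follows that
\[
\Gamma_u=I-AA^T\succeq(1-e^{-m_\bbeta u})I.
\]
The potential pitfall of differentiating the transposed flow is therefore avoided.
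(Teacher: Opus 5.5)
Your proposal is correct and follows the paper's proof essentially step for step. Both arguments obtain the score representation by differentiating the Gaussian convolution, identify \(\Gamma_u=I-\Phi(u,0)\Phi(u,0)^T\) through uniqueness for the Lyapunov ODE, and bound \(\|\Phi(u,0)w\|^2\) by Gr\"onwall before passing to \(\Phi^T\) via \(\|\Phi^T\|_2=\|\Phi\|_2\). Your dominated-convergence justification for differentiating under the expectation is a small detail the paper leaves implicit.
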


\begin{proof}
The density of \(X_u\) is the Gaussian convolution of the law of \(\Phi(u,0)X_0\) with \(N(0,\Gamma_u)\). Differentiating this convolution in \(x\) gives
$$
h_u(X_u)=\E_{Z_u\mid X_u}[-\Gamma_u^{-1}Z_u].
$$
The covariance identity follows from the covariance equation
$$
\Gamma_u'=-\frac12\bbeta(u)\Gamma_u-\frac12\Gamma_u\bbeta(u)+\bbeta(u),
\qquad \Gamma_0=0,
$$
which is also solved by \(I-\Phi(u,0)\Phi(u,0)^T\). For any vector \(w\),
\[
\frac{d}{dr}\|\Phi(r,0)w\|^2
=-(\Phi(r,0)w)^T\bbeta(r)\Phi(r,0)w
\leq -m_\bbeta\|\Phi(r,0)w\|^2.
\]
Therefore \(\|\Phi(u,0)\|_2^2\leq e^{-m_\bbeta u}\). Since \(\|\Phi(u,0)^T\|_2=\|\Phi(u,0)\|_2\), for every vector \(v\),
\[
v^T\Gamma_uv
=\|v\|^2-\|\Phi(u,0)^Tv\|^2
\geq(1-e^{-m_\bbeta u})\|v\|^2,
\]
which proves the lower bound on \(\Gamma_u\).
\end{proof}

\begin{proof}[Proof of Lemma~\ref{lem:cumulative-technical-bounds}]
Jensen and Lemma~\ref{lem:gaussian-smoothing-score} give
$\E\|h_u(X_u)\|^2\leq d/(1-e^{-m_\bbeta u})$.
Also, $\|\Phi(u,0)\|_2\leq1$ and $\Gamma_u\preceq I$ imply
$\E\|X_u\|^2\leq M_2+d$.
Since $\zeta_u(X_u)=h_u(X_u)+X_u$,
\[
I_\bbeta(u)
\leq2B_\bbeta\left(M_2+d+\frac{d}{1-e^{-m_\bbeta u}}\right).
\]
The elementary inequality
$(1-e^{-x})^{-1}\leq1+x^{-1}$, together with
$M_2\leq C_{\mathrm{mom}}d$,
proves the first displayed bound. The same score and moment
estimates imply the stated bound on $K(T-u)$.

For the covariance term, \(\Sigma_u\) satisfies
$$
\Sigma_u'=-\frac12\bbeta(u)\Sigma_u-\frac12\Sigma_u\bbeta(u)+\bbeta(u),
$$
and the previous moment bound gives \(\Tr(\Sigma_u)\leq Cd\). Using \(\|\bbeta(u)\|_2\leq B_\bbeta\) and \(\|\bbeta'(u)\|_2\leq L_\bbeta\), we obtain \(|G'(u)|\leq Cd\).
The functions \(\bbeta\) and \(\Sigma\) are continuous, so \(G\) is continuous. For \(v\leq u\), partitioning \([v,u]\subseteq[t_{\min},T]\) at the finitely many schedule breakpoints and integrating the derivative bound on each smooth piece gives
\[
|G(u)-G(v)|\leq Cd|u-v|.
\]
The case \(u<v\) follows by symmetry.

For the matrix estimate, set
\[
\nu:=T-t,\qquad \mu:=T-s,\qquad h:=\mu-\nu=t-s,
\qquad \Phi:=\Phi(\mu,\nu).
\]
For every vector \(v\) and \(r\in[\nu,\mu]\),
\[
\frac{d}{dr}\|\Phi(r,\nu)v\|^2
=-\left(\Phi(r,\nu)v\right)^T
\bbeta(r)\Phi(r,\nu)v
\leq0.
\]
Thus
\[
\|\Phi(r,\nu)\|_2\leq1,
\qquad r\in[\nu,\mu].
\]
Integrating the transition equation gives
\[
I-\Phi
=\frac12\int_\nu^\mu\bbeta(r)\Phi(r,\nu)\,dr,
\]
and hence
\[
\|I-\Phi\|_2
\leq
\frac12\int_\nu^\mu
\|\bbeta(r)\|_2\|\Phi(r,\nu)\|_2\,dr
\leq \frac12B_\bbeta h.
\]

The interval \([\nu,\mu]\) may cross breakpoints of the piecewise-\(C^1\) schedule. Partition it at those finitely many breakpoints, writing
\[
\nu=r_0<r_1<\cdots<r_J=\mu.
\]
Continuity of \(\bbeta\), the fundamental theorem of calculus on each smooth piece, and the derivative bound give
\[
\begin{aligned}
\|\bbeta(\mu)-\bbeta(\nu)\|_2
&\leq
\sum_{j=1}^J
\|\bbeta(r_j)-\bbeta(r_{j-1})\|_2\\
&\leq
\sum_{j=1}^J\int_{r_{j-1}}^{r_j}
\|\bbeta'(r)\|_2\,dr
\leq L_\bbeta h.
\end{aligned}
\]
This proves the required schedule-variation estimate even when \([\nu,\mu]\) crosses a breakpoint.

Using the exact decomposition
\[
C_{s,t}
=\bbeta(\mu)-\bbeta(\nu)
+(I-\Phi)\bbeta(\nu)
+\bbeta(\nu)(I-\Phi^T),
\]
we obtain
\[
\begin{aligned}
\|C_{s,t}\|_2
&\leq
\|\bbeta(\mu)-\bbeta(\nu)\|_2
+2\|I-\Phi\|_2\|\bbeta(\nu)\|_2\\
&\leq
\left(L_\bbeta+B_\bbeta^2\right)h.
\end{aligned}
\]
Moreover, \(C_{s,t}\) is symmetric because \(\bbeta(\mu)\) and \(\bbeta(\nu)\) are symmetric and
\[
\Phi\bbeta(\nu)+\bbeta(\nu)\Phi^T
\]
is the sum of a matrix and its transpose. Consequently, \(M_{s,t}\) is also symmetric. Since \(\bbeta(\mu)\succeq m_\bbeta I\),
\[
\begin{aligned}
\|M_{s,t}\|_2
&\leq
\|\bbeta(\mu)^{-1/2}\|_2^2
\|C_{s,t}\|_2\\
&\leq
\frac{L_\bbeta+B_\bbeta^2}{m_\bbeta}(t-s).
\end{aligned}
\]
Finally, symmetry gives
\[
\lambda_{\max}(M_{s,t})
\leq \|M_{s,t}\|_2,
\]
which proves both asserted estimates without any commutativity assumption.
\end{proof}

\begin{proof}[Proof of Theorem~\ref{theo:relative-grid-kl-complexity}]
The KL decomposition in Theorem~\ref{theo:kl-decomposition}, the cumulative
bound in Theorem~\ref{theo:cumulative-E-relative-grid}, and
Lemma~\ref{lem:terminal-mixing} give
\eqref{eq:relative-grid-score-error}. If \eqref{eq:relative-grid-rate} also
holds, then
\[
\operatorname{Disc}_{\mathrm{score}}(\mathcal T_N)
\leq\frac{Cd}{N}
\left(T+\log\frac1{t_{\min}}\right)^2.
\]
This proves \eqref{eq:rate-regular-final-error}.
For the stated terminal horizon,
\[
e^{-\Lambda_T}\leq e^{-m_\bbeta T}
\leq\frac{\varepsilon^2}{4(M_2+d)}\leq\frac14.
\]
Writing \(S=M_2+d\), substitution into
\eqref{eq:terminal-mixing-rate} gives
\[
\tau_T
\leq\frac{M_2\varepsilon^2}{8S}
+\frac{d\varepsilon^4}{48S^2}
\leq\frac{7}{48}\varepsilon^2
<\frac13\varepsilon^2.
\]
The score term is at most
\(\varepsilon^2/3\) by assumption, and a sufficiently large \(C\) in
\eqref{eq:required-N-rate-regular} makes the discretization term at most
\(\varepsilon^2/3\). This proves the claimed KL guarantee.
Finally, \(M_2\leq C_{\mathrm{mom}}d\) implies
\[
T+\log\frac1{t_{\min}}
\leq C\left(1+\log\frac{d}{\varepsilon^2}
+\log\frac1{t_{\min}}\right),
\]
which proves the second line of \eqref{eq:required-N-rate-regular}.

To verify the Wasserstein statement following the theorem, couple \(q_0\) and
\(q_u\) through \(X_u=A_uX_0+\Gamma_u^{1/2}Z\). The forward equation and
Assumption~\ref{assu:baseline} give \(\|A_u-I\|_2\leq Cu\) and
\(\Tr(\Gamma_u)\leq Cdu\) for \(0\leq u\leq1\). Hence
\[
W_2^2(q_0,q_u)
\leq \E\|X_u-X_0\|^2
\leq \|A_u-I\|_2^2M_2+\Tr(\Gamma_u)
\leq Cdu.
\]
Taking \(u=t_{\min}\lesssim\delta^2/d\) proves the claim.
\end{proof}

\begin{rema}[Nonmonotone schedules]\label{rema:nonmonotone-schedules}
If \(\bbeta'\) is not negative semidefinite, one may replace the monotonicity of \(I_\bbeta\) by the integrating factor
$$
F(u):=e^{-A(u)}I_\bbeta(u),
\qquad
A(u):=\int_{t_{\min}}^u
\left[
\lambda_{\max}\left(\bbeta(r)^{-1/2}\bbeta'(r)\bbeta(r)^{-1/2}\right)
\right]_+dr.
$$
Then \(F\) is nonincreasing. Repeating the proof of Theorem~\ref{theo:cumulative-E-relative-grid} gives the same bound with an additional multiplier \(e^{A_\bbeta}\), where \(A_\bbeta=A(T)\). We keep the monotone version in the main text because it gives the clearest comparison with the scalar discretization bound \citep[Theorem~1]{benton2023nearly}.
\end{rema}

\section{Terminal mixing and initialization}\label{app:terminal-initialization}

Set
\[
A_T:=\Phi(T,0),\qquad
\Gamma_T:=I-A_TA_T^T,
\qquad
\rho_T:=\|A_T\|_2^2,
\qquad
\Sigma_0:=\E[X_0X_0^T],
\]
and define the directional intermediate bound
\[
\tau_T^{\mathrm{dir}}
:=\frac12\Tr(A_T\Sigma_0A_T^T)
+\frac12\{\Tr(\Gamma_T)-\log\det\Gamma_T-d\}.
\]
The proof below shows
\[
\KL(q_T\|\pi)
\leq\tau_T^{\mathrm{dir}}
\leq\frac{M_2}{2}\rho_T
+\frac{d}{4}\frac{\rho_T^2}{1-\rho_T}
\leq\tau_T.
\]

\begin{proof}[Proof of Lemma~\ref{lem:terminal-mixing}]
Write the forward solution as
\[
X_T=A_TX_0+Z_T,
\]
where $Z_T$ is centered Gaussian, independent of $X_0$, with covariance
$\Gamma_T$. Convexity of relative entropy and the Gaussian KL formula give
\begin{align*}
\KL(q_T\|\pi)
&\leq\E_{X_0}\KL\bigl(N(A_TX_0,\Gamma_T)\|N(0,I)\bigr)\\
&=\frac12\Tr(A_T\Sigma_0A_T^T)
+\frac12\{\Tr(\Gamma_T)-\log\det\Gamma_T-d\}
=\tau_T^{\mathrm{dir}}.
\end{align*}

Let $\lambda_1,\ldots,\lambda_d$ be the eigenvalues of $A_TA_T^T$.
Since $0\leq\lambda_i\leq\rho_T<1$ and
\[
-a-\log(1-a)
=\int_0^a\frac{s}{1-s}\,ds
\leq\frac{a^2}{2(1-a)},
\qquad 0\leq a<1,
\]
we have
\[
\Tr(\Gamma_T)-\log\det\Gamma_T-d
\leq\frac{d\rho_T^2}{2(1-\rho_T)}.
\]
Moreover,
$\Tr(A_T\Sigma_0A_T^T)\leq\rho_T\Tr(\Sigma_0)=\rho_TM_2$.
This proves the middle inequality in the theorem.

Finally, for every vector $v$,
\[
\frac{d}{dt}\|\Phi(t,0)v\|^2
=-(\Phi(t,0)v)^T\bbeta(t)\Phi(t,0)v
\leq-\lambda_{\min}(\bbeta(t))\|\Phi(t,0)v\|^2.
\]
Integration yields $\rho_T\leq e^{-\Lambda_T}$. Since
$r\mapsto r^2/(1-r)$ is increasing on $[0,1)$, substituting this estimate
gives the bound by $\tau_T$.
\end{proof}

\section{Intrinsic-dimensional proofs}
\label{app:intrinsic-proofs}

\subsection{Posterior-mean identities}

\begin{proof}[Proof of the matrix identities in \eqref{eq:ld-tweedie}]
Conditionally on \(X_0=x_0\), the random variable \(X_u\) is Gaussian with mean \(A_ux_0\) and covariance \(\Gamma_u\). Differentiating the Gaussian mixture density gives
\[
\nabla q_u(x)
=
\int
-\Gamma_u^{-1}(x-A_ux_0)
\varphi_{\Gamma_u}(x-A_ux_0)\,q_0(dx_0).
\]
After division by \(q_u(x)\), the conditional expectation of \(X_0\) given \(X_u=x\) yields the first identity in \eqref{eq:ld-tweedie}. Substitution gives the posterior-mean representation of the reverse drift in \eqref{eq:true-reverse}.

It remains to verify \eqref{eq:ld-S-W}. Let \(B_u:=A_u^TA_u\). The push-through identity gives
\[
S_u=A_u^T(I-A_uA_u^T)^{-1}A_u=(I-B_u)^{-1}-I.
\]
Since \(A_u'=-\bbeta(u)A_u/2\),
\[
B_u'=-A_u^T\bbeta(u)A_u.
\]
Differentiating \((I-B_u)^{-1}-I\) and using
\[
A_u^T\Gamma_u^{-1}=(I-B_u)^{-1}A_u^T
\]
gives
\[
-S_u'
=A_u^T\Gamma_u^{-1}\bbeta(u)\Gamma_u^{-1}A_u.
\]
The same expression equals \(D_u^T\bbeta(u)^{-1}D_u\), proving the second identity in \eqref{eq:ld-tweedie}.
\end{proof}

Recall \(\Rbar_u\) from Section~4.1 and set
\[
\mathcal J_u:=\E[h_u(X_u)h_u(X_u)^T].
\]
The unconditional identity
\begin{equation}\label{eq:ld-renormalized-energy}
\Rbar_u
=
A_u^{-1}
\{\Sigma_u-2\Gamma_u+\Gamma_u\mathcal J_u\Gamma_u\}
A_u^{-T}
\end{equation}
follows directly from
\[
m_u(X_u)=A_u^{-1}\{X_u+\Gamma_uh_u(X_u)\}
\]
and Gaussian integration by parts,
\[
\E[X_uh_u(X_u)^T]
=
\E[h_u(X_u)X_u^T]
=-I.
\]
Expanding the second moment gives \eqref{eq:ld-renormalized-energy}. Since
\(A_u^{-T}W_uA_u^{-1}=\Gamma_u^{-1}\bbeta(u)\Gamma_u^{-1}\), multiplication
by \(W_u\) and cyclicity of the trace give
\begin{equation}\label{eq:ld-weighted-renormalized-energy}
\Tr(W_u\Rbar_u)
=\Tr\{\bbeta(u)\mathcal J_u\}
+\Tr\{\Gamma_u^{-1}\bbeta(u)\Gamma_u^{-1}\Sigma_u\}
-2\Tr\{\Gamma_u^{-1}\bbeta(u)\}.
\end{equation}

\subsection{Gaussian-observation bound}

\begin{proof}[Proof of Lemma~\ref{lem:ld-channel}]
This proof is a matrix-valued counterpart of the covering arguments used to
control posterior covariance in low-dimensional DDPM analysis
\cite{huang2026ddpm}. We use only a finite net and Gaussian concentration,
standard tools for metric-entropy bounds \citep[Chapter~5]{wainwright2019high},
and avoid any posterior-covariance evolution identity. Fix \(u>0\) and whiten
the forward observation:
\[
\widetilde X
:=\Gamma_u^{-1/2}X_u
=\widetilde A_uX_0+Z,
\qquad
\widetilde A_u:=\Gamma_u^{-1/2}A_u,
\qquad
Z\sim N(0,I).
\]
Then \(\widetilde A_u^T\widetilde A_u=S_u\), and conditioning on \(\widetilde X\) is equivalent to conditioning on \(X_u\).

Fix \(r>0\), and let \(\{x_1,\ldots,x_M\}\) be an \(r\)-net of \(\cX\)
in the metric \(d_{\bbeta,u}\). Thus, for every \(x\in\cX\), some \(x_j\)
satisfies \(\|\widetilde A_u(x-x_j)\|_2\leq r\).

Define the minimum-distance estimator
\[
\widehat x(\widetilde X)
\in
\operatorname*{argmin}_{1\leq j\leq M}
\|\widetilde X-\widetilde A_ux_j\|^2.
\]
Condition on \(X_0=x\), let \(x_{j_0}\) be a nearest net point, and write
\[
a_j:=\widetilde A_u(x-x_j).
\]
The minimizing property gives
\[
\|a_{\widehat j}\|^2+2\langle Z,a_{\widehat j}\rangle
\leq
\|a_{j_0}\|^2+2\langle Z,a_{j_0}\rangle.
\]
Consequently,
\begin{equation}\label{eq:ld-minimum-distance}
\frac12\|a_{\widehat j}\|^2
\leq
\|a_{j_0}\|^2
+2\langle Z,a_{j_0}\rangle
+\max_{1\leq j\leq M}
\left\{
-2\langle Z,a_j\rangle-\frac12\|a_j\|^2
\right\}.
\end{equation}
For every deterministic vector \(a\),
\[
\E\exp\left[
\frac18
\left\{
-2\langle Z,a\rangle-\frac12\|a\|^2
\right\}
\right]
\leq1.
\]
The log-sum-exp inequality therefore implies
\[
\E
\max_{1\leq j\leq M}
\left\{
-2\langle Z,a_j\rangle-\frac12\|a_j\|^2
\right\}
\leq8\log M.
\]
Taking expectations in \eqref{eq:ld-minimum-distance}, using
\(\E\langle Z,a_{j_0}\rangle=0\), and observing that
\[
\|a_{j_0}\|^2\leq r^2,
\]
we obtain
\[
\E\|\widetilde A_u\{X_0-\widehat x(\widetilde X)\}\|^2
\leq
C\{\log M+r^2\}.
\]

The conditional mean minimizes Bayes risk for every deterministic positive semidefinite quadratic loss. Hence
\begin{equation}\label{eq:ld-finite-net}
\mathcal I_{\mathrm{pm}}(u)
=
\E\|\widetilde A_u\{X_0-m_u(X_u)\}\|^2
\leq
C\{\log M+r^2\}.
\end{equation}
Minimizing over \(r\) proves \eqref{eq:ld-anisotropic-complexity}. Finally, a
Euclidean \(\varepsilon\)-net is an
\(\|S_u\|_2^{1/2}\varepsilon\)-net in \(d_{\bbeta,u}\). Substitution in
\eqref{eq:ld-anisotropic-finite-net} proves
\eqref{eq:ld-finite-net-main}.
\end{proof}

\section{Matrix OU identities}\label{app:technical-identities}

This appendix records the elementary identities used in Section~2. All computations are first justified for smooth positive densities with sufficient decay; the general case follows by applying the identities at positive time after OU smoothing and then using a standard approximation argument.

\subsection{Forward calculus}

Since \(q_u=\gamma r_u\) and \(\nabla\gamma=-x\gamma\), the Fokker--Planck equation for \eqref{eq:intro-forward-SDE} gives
\[
\partial_u q_u
=\frac12\nabla\cdot\{\bbeta(u)(xq_u+\nabla q_u)\}
=\frac12\nabla\cdot\{\bbeta(u)\gamma\nabla r_u\}.
\]
Dividing by \(\gamma\) yields \(\partial_u r_u=\mathcal L_ur_u\). Moreover, Gaussian integration by parts gives, for smooth \(f,g\),
\begin{equation}\label{eq:app-gaussian-ibp}
\int f\mathcal L_ug\,d\pi
=-\frac12\int\langle\nabla f,\bbeta(u)\nabla g\rangle d\pi.
\end{equation}

Set \(\ell_u=\log r_u\), \(\zeta_u=\nabla\ell_u\), \(H_u=\nabla^2\ell_u\), and \(\varphi_u=\zeta_u^T\bbeta(u)\zeta_u\). Direct differentiation gives
\[
\nabla\varphi_u=2H_u\bbeta(u)\zeta_u
\]
and
\[
\Tr\{\bbeta(u)\nabla^2\varphi_u\}
=2\|\bbeta(u)^{1/2}H_u\bbeta(u)^{1/2}\|_F^2
+2\langle\bbeta(u)\zeta_u,\nabla\Tr\{\bbeta(u)H_u\}\rangle.
\]
Combining these identities with
\[
\nabla(\mathcal L_u\ell_u)
=\frac12\nabla\Tr\{\bbeta(u)H_u\}
-\frac12H_u\bbeta(u)x-\frac12\bbeta(u)\zeta_u
\]
gives the Bochner identity
\begin{equation}\label{eq:app-bochner}
\mathcal L_u\varphi_u-2\langle\bbeta(u)\zeta_u,\nabla(\mathcal L_u\ell_u)\rangle
=\|\bbeta(u)^{1/2}H_u\bbeta(u)^{1/2}\|_F^2
+\zeta_u^T\bbeta(u)^2\zeta_u.
\end{equation}

\subsection{Information dissipation}

The relative forward equation, \(\int r_u\,d\pi=1\), and \eqref{eq:app-gaussian-ibp} give
\[
\frac{d}{du}\KL(q_u\|\pi)
=\int (\mathcal L_ur_u)\log r_u\,d\pi
=-\frac12 I_\bbeta(u),
\]
which proves Proposition~\ref{prop:kl-dissipation}.

Since
\[
\partial_u\ell_u=\mathcal L_u\ell_u+\frac12\varphi_u,
\qquad
\partial_u\zeta_u=\nabla(\mathcal L_u\ell_u)+\frac12\nabla\varphi_u,
\]
differentiating \(I_\bbeta(u)=\int r_u\varphi_u\,d\pi\) gives
\[
\frac{d}{du}I_\bbeta(u)
=\int (\mathcal L_ur_u)\varphi_u\,d\pi
+2\int r_u\langle\bbeta(u)\zeta_u,\partial_u\zeta_u\rangle d\pi
+I_{\bbeta'}(u).
\]
Using \eqref{eq:app-gaussian-ibp} and \eqref{eq:app-bochner}, the first two terms equal \(-J_\bbeta(u)-I_{\bbeta^2}(u)\). Hence
\[
\frac{d}{du}I_\bbeta(u)
=I_{\bbeta'}(u)-I_{\bbeta^2}(u)-J_\bbeta(u),
\]
which proves Proposition~\ref{prop:fisher-evolution}.

\section{KL decomposition}\label{app:kl-decomposition-proof}

\begin{proof}[Proof of Theorem~\ref{theo:kl-decomposition}]
For score freezing, set \(a_t:=\bbeta(T-t)\). Let
\(P_{\mathrm{score}}^{q_T}\) and
\(P_{\mathrm{score}}^\pi\) denote the path laws of
\eqref{eq:score-frozen-sampler} initialized from \(q_T\) and \(\pi\),
respectively. The true and discretized processes with initial law \(q_T\) have
the same diffusion covariance, and their drift difference on
\([t_k,t_{k+1})\) is
$$
a_t\left(\nabla\log q_{T-t}(Y_t)-s_\theta(Y_{t_k},u_k)\right).
$$
Girsanov's theorem, justified by stopping-time truncation, gives
\begin{equation}\label{eq:appendix-girsanov}
\KL(Q\|P_{\mathrm{score}}^{q_T})
\leq \frac12\sum_{k=0}^{N-1}\int_{t_k}^{t_{k+1}}
\E_Q\left[
\left\|\nabla\log q_{T-t}(Y_t)-s_\theta(Y_{t_k},u_k)\right\|_{\bbeta(T-t)}^2
\right]dt.
\end{equation}
The endpoint laws of \(Q\) and \(P_{\mathrm{score}}^\pi\) are
\(q_{t_{\min}}\) and \(p_{T-t_{\min}}\). Hence data processing and the entropy
chain rule give
$$
\KL(q_{t_{\min}}\|p_{T-t_{\min}})
\leq
\KL(Q\|P_{\mathrm{score}}^{q_T})+\KL(q_T\|\pi).
$$
Finally, insert and subtract \(\nabla\log q_{u_k}(Y_{t_k})\) in \eqref{eq:appendix-girsanov} and use
$$
\frac12\|x+y\|_A^2\leq\|x\|_A^2+\|y\|_A^2.
$$
The budget \eqref{eq:score-error-budget} bounds the score-approximation contribution by \(\epsilon_{\mathrm{score}}^2\), while the remaining contribution is the exact-score discretization term in Theorem~\ref{theo:kl-decomposition}.

For posterior-mean freezing, let \(P_{\mathrm{pm}}^{q_T}\) and
\(P_{\mathrm{pm}}^\pi\) denote the path laws
of \eqref{eq:ld-sampler} initialized from \(q_T\) and \(\pi\). On the \(k\)th
interval, the drift difference from the exact reverse process is
\[
D_u\{m_u(Y_t)-\widehat m_{u_k}(Y_{t_k})\},
\qquad u=T-t.
\]
Using \eqref{eq:ld-tweedie}, Girsanov's theorem gives
\[
\KL(Q\|P_{\mathrm{pm}}^{q_T})
\leq
\frac12\sum_{k=0}^{N-1}\int_{t_k}^{t_{k+1}}
\E_Q
\|m_u(Y_t)-\widehat m_{u_k}(Y_{t_k})\|_{W_u}^2dt.
\]
Insert and subtract \(m_{u_k}(Y_{t_k})\). Under time reversal,
\((Y_t,Y_{t_k})\) has the law of \((X_u,X_{u_k})\), and
\eqref{eq:ld-markov-identity} gives
\[
\E_Q[m_u(Y_t)-m_{u_k}(Y_{t_k})\mid Y_{t_k}]=0.
\]
The cross term therefore vanishes for the deterministic matrix \(W_u\). After
the change of variables \(u=T-t\), the two remaining terms are exactly
\(\operatorname{Disc}_{\mathrm{pm}}\) and
\(\operatorname{Err}_{\mathrm{pm}}\). Data processing and the entropy chain
rule add \(\KL(q_T\|\pi)\), proving the posterior-mean inequality.
\end{proof}

\section{Schedule and adaptive-grid proofs}
\label{app:adaptive-grid-proof}

\begin{proof}[Proof of Proposition~\ref{prop:forward-score-curvature}]
For any symmetric matrix \(A\), the integration-by-parts calculation in
\eqref{eq:K-information} gives
\begin{equation}\label{eq:app-ordinary-relative-fisher}
\mathscr K_A(u)
=I_A(u)+2\Tr A-\Tr(A\Sigma_u).
\end{equation}
Recall that \(\Sigma_u=\E[X_uX_u^T]\). The forward equation implies
\begin{equation}\label{eq:app-second-moment-evolution}
\Sigma_u'
=\bbeta(u)-\frac12\{\bbeta(u)\Sigma_u+\Sigma_u\bbeta(u)\}.
\end{equation}
Differentiate \eqref{eq:app-ordinary-relative-fisher} at
\(A=\bbeta(u)\), use Proposition~\ref{prop:fisher-evolution}, and then apply
\eqref{eq:app-second-moment-evolution}. This gives
\begin{equation}\label{eq:app-ordinary-fisher-intermediate}
\frac{d}{du}\mathscr K_{\bbeta(u)}(u)
=\mathscr K_{\dot\bbeta(u)}(u)
-\mathscr K_{\bbeta(u)^2}(u)
+\Tr\{\bbeta(u)^2\}-J_\bbeta(u).
\end{equation}
Since \(H_u-I=\nabla^2\log q_u\), another integration by parts gives
\[
\int q_u(x)\{H_u(x)-I\}\,dx
=-\int q_u(x)h_u(x)h_u(x)^Tdx.
\]
Expanding \(J_\bbeta\) from \eqref{eq:Jbeta} around \(H_u-I\) therefore
yields
\[
J_\bbeta(u)
=\widetilde J_{\bbeta(u)}(u)
-2\mathscr K_{\bbeta(u)^2}(u)
+\Tr\{\bbeta(u)^2\}.
\]
Substitution into \eqref{eq:app-ordinary-fisher-intermediate} proves
\eqref{eq:ordinary-fisher-evolution}. Finally,
\(K(s)=\mathscr K_{\bbeta(u)}(u)\) for \(u=T-s\), while the second term in
the score formula of \eqref{eq:adaptive-growth-formulas} is
\(\mathscr K_{\dot\bbeta(u)+\bbeta(u)^2}(u)\). The two terms cancel according
to \eqref{eq:ordinary-fisher-evolution}, leaving
\(g_{\mathrm{sc}}(s)=\widetilde J_{\bbeta(u)}(u)\).
\end{proof}

\begin{proof}[Proof of Corollary~\ref{prop:gaussian-schedule-diagnostic}]
For a Gaussian marginal,
\(\nabla^2\log q_u=-V_u^{-1}\), so
\eqref{eq:gaussian-schedule-growth} follows from
Proposition~\ref{prop:forward-score-curvature}. In two dimensions, diagonalize
\(V_u\) and write
\(\bbeta=R_\theta\operatorname{diag}(b_1,b_2)R_\theta^T\), with
\(b_1\geq b_2\), and let \(v_1\geq v_2\) be the eigenvalues of \(V_u\).
Expanding \eqref{eq:gaussian-schedule-growth} as a quadratic function of
\(\sin^2\theta\) shows that it is minimized at \(\theta=0\). Thus the larger
rate is paired with the larger variance.
\end{proof}

\begin{proof}[Proof of Proposition~\ref{prop:intrinsic-schedule-alignment}]
Because the matrices in \eqref{eq:intrinsic-permuted-schedule} share the fixed
eigenbasis \(Q\),
\[
A_u^\sigma
=Q\operatorname{diag}\left\{
e^{-\Lambda_{\sigma(1)}(u)/2},\ldots,
e^{-\Lambda_{\sigma(d)}(u)/2}
\right\}Q^T.
\]
It follows from \(S_u=A_u^T(I-A_uA_u^T)^{-1}A_u\) that
\begin{equation}\label{eq:app-intrinsic-s-eigenvalues}
S_u^\sigma
=Q\operatorname{diag}\left\{
\frac1{e^{\Lambda_{\sigma(1)}(u)}-1},\ldots,
\frac1{e^{\Lambda_{\sigma(d)}(u)}-1}
\right\}Q^T.
\end{equation}

Suppose a rate curve indexed by \(p>k\) is assigned to a coordinate
\(i\leq k\), while a curve indexed by \(q\leq k\) is assigned to a coordinate
\(\ell>k\). Swap these two assignments. Since
\(\Lambda_q(u)\geq\Lambda_p(u)\) and
\(z\mapsto(e^z-1)^{-1}\) is decreasing, the coefficient of
\((q_i^T(x-y))^2\) in \(d_{\bbeta,u}(x,y)^2\) does not increase. The changed
coefficient in direction \(q_\ell\) is irrelevant because
\(x-y\in\mathcal U\). Repeating this exchange produces a permutation
\(\sigma^\uparrow\) with the required assignment and proves
\eqref{eq:intrinsic-alignment-metric}.

A pointwise smaller metric has no larger covering number at any radius.
Therefore \(\mathsf H_{\bbeta_{\sigma^\uparrow},u}(r)\leq
\mathsf H_{\bbeta_\sigma,u}(r)\) for every \(r>0\); taking the infimum proves
\eqref{eq:intrinsic-alignment-complexity}. The last claim follows by inserting
this comparison into \eqref{eq:ld-main-disc-anisotropic}.
\end{proof}

\begin{proof}[Proof of Proposition~\ref{prop:adaptive-asymptotic}]
Write \(\Delta_i=t_{i+1}-t_i\). On each smooth schedule interval,
\eqref{eq:adaptive-local-growth} gives, locally uniformly in \(s\),
\[
\mathcal C_\star[s,s+\Delta]
=\frac12g_\star(s)\Delta^2+o(\Delta^2)
\qquad (\Delta\downarrow0).
\]
Since \(g_\star\) is bounded above and away from zero,
Definition~\ref{def:adaptive-grid} gives a mesh of order \(N^{-1}\). Apart
from the finitely many grid intervals that meet a schedule breakpoint, the
integral mean-value theorem gives \(\zeta_i\in[t_i,t_{i+1}]\) such that
\[
\sqrt{g_\star(\zeta_i)}\,\Delta_i=\frac{L_\star}{N}.
\]
Piecewise continuity and the local expansion then give
\[
\sum_{i=0}^{N-1}\mathcal C_\star[t_i,t_{i+1}]
=\frac12\sum_{i=0}^{N-1}g_\star(t_i)\Delta_i^2+o(N^{-1})
=\frac{L_\star^2}{2N}+o(N^{-1}),
\]
because the finitely many breakpoint intervals contribute \(O(N^{-2})\).
This proves \eqref{eq:adaptive-grid-optimal-value}.

For any competing grid with mesh \(O(N^{-1})\), Cauchy--Schwarz gives
\[
\sum_{i=0}^{N-1}g_\star(t_i')({\Delta_i'})^2
\geq\frac1N
\left\{\sum_{i=0}^{N-1}
\sqrt{g_\star(t_i')}\,\Delta_i'\right\}^2.
\]
The expression in braces converges to \(L_\star\). Omitting the finitely many
breakpoint intervals changes it by \(O(N^{-1})\), while their costs are
nonnegative. The local expansion therefore proves
\eqref{eq:adaptive-grid-lower-bound}.
\end{proof}

\begin{proof}[Proof of Proposition~\ref{prop:adaptive-dimension}]
We first identify the two local growth rates. Put \(u=T-s\). On each smooth
schedule interval, the transition equation and \(C^1\) regularity give
\[
C_{s,s+r}=r\{\dot\bbeta(u)+\bbeta(u)^2\}+o(r).
\]
Hence
\[
g_{\mathrm{sc}}(s)
=K'(s)+\Tr\!\left[
\{\dot\bbeta(u)+\bbeta(u)^2\}\mathcal H_s\right].
\]
Similarly, differentiability of \(\Rbar_u\) gives
\[
g_{\mathrm{pm}}(s)
=\frac12\Tr\{W_u(-\Rbar_u')\}.
\]

For score freezing, write
\[
\bar K(u):=K(T-u),
\qquad
w(u):=\min\{1,u\},
\qquad
\mathscr K_A(u):=\E[h_u(X_u)^TAh_u(X_u)].
\]
The preceding identity and \(\bbeta'(u)\preceq0\) give
\[
g_{\mathrm{sc}}(T-u)
=
-\bar K'(u)
+\mathscr K_{\bbeta'(u)+\bbeta(u)^2}(u)
\leq
-\bar K'(u)+B_\bbeta\bar K(u).
\]
Lemma~\ref{lem:cumulative-technical-bounds} and integration by parts yield
\[
\begin{aligned}
\int_{t_{\min}}^T
w(u)(-\bar K'(u))\,du
&\leq
Cd\left(1+\log\frac1{t_{\min}}\right),\\
\int_{t_{\min}}^T
w(u)\bar K(u)\,du
&\leq Cd(1+T).
\end{aligned}
\]
Since
\[
\int_{t_{\min}}^T\frac{du}{w(u)}
=
T-1+\log\frac1{t_{\min}},
\]
weighted Cauchy--Schwarz gives
\[
L_{\mathrm{sc}}^2
\leq
Cd\left(T+\log\frac1{t_{\min}}\right)^2.
\]

For posterior-mean freezing, apply the generic lower bound to any
rate-regular reference-grid sequence. Theorem~\ref{theo:ld-cumulative} gives
\[
\frac12L_{\mathrm{pm}}^2
\leq
\liminf_{N\to\infty}
N\operatorname{Disc}_{\mathrm{pm}}(\mathcal T_N)
\leq
Ck\log k
\left(T+\log\frac1{t_{\min}}\right)^2.
\]
\end{proof}
\bibliography{references}

\end{document}